\definecolor{auburn}{rgb}{0.43, 0.21, 0.1}
\newtheorem*{theorem*}{Theorem}
\newtheorem{theorem}{Theorem}[section]
\newtheorem{proposition}[theorem]{Proposition}
\newtheorem{lemma}[theorem]{Lemma}
\newtheorem{corollary}[theorem]{Corollary}
\newtheorem{notation}[theorem]{Notation}
\newtheorem{definition}[theorem]{Definition}
\newtheorem{assumption}[theorem]{Assumption}
\newtheorem{example}[theorem]{Example}
\theoremstyle{remark}
\newtheorem{remark}[theorem]{Remark}
\newcommand{\R}{\mathbb{R}}
\newcommand{\C}{\mathbb{C}}
\newcommand{\D}{\mathbb{D}}
\newcommand{\Ec}{\mathcal{E}}
\newcommand{\Fc}{\mathcal{F}}
\newcommand{\Kc}{\mathcal{K}}
\newcommand{\Lc}{\mathcal{L}}
\newcommand{\Pc}{\mathcal{P}}
\newcommand{\Prob}[1]{\mathbb{P} \left( #1 \right) }
\renewcommand{\P}{\mathbb{P}}
\newcommand{\E}{\mathbb{E}}
\newcommand{\floor}[1]{\left\lfloor #1 \right\rfloor}
\newcommand{\indic}[1]{ \mathbf{1}_{ \left\{ #1 \right\} } }
\newcommand{\eps}{\varepsilon}
\DeclareMathOperator{\dist}{dist}
\renewcommand{\d}{\mathrm{d}}
\newcommand{\loopm}{\mu^{\mathrm{loop}}}
\def \loopmeasure{\mu^{\rm loop}}
\def \loopm{\mu^{\rm loop}}
\newcommand{\bubm}[1]{\mu^{#1, \mathrm {bub}}}
\def \inte{{\rm int}}
\def \ext{{\rm ext}}
\def \out{{\rm out}}
\def \diam{{\rm diam}}
\def \area{{\rm Area}}
\newcommand{\Hb}{\mathbb{H}}
\newcommand{\Cb}{\mathbb{C}}
\newcommand{\Eb}{\mathbb{E}}
\newcommand{\Pb}{\mathbb{P}}
\newcommand{\Db}{\mathbb{D}}
\newcommand{\Rb}{\mathbb{R}}
\renewcommand{\Im}{\mathrm{Im}}
\renewcommand{\Re}{\mathrm{Re}}
\def \wired{{\rm wired}}
\def \SLE{{\rm SLE}}
\newcommand{\wt}{\widetilde}
\def \pint{\P^\wired_{\D, \partial \D}}
\def \Eint{\E^\wired_{\D, \partial \D}}
\newcommand{\corint}[1]{\langle#1\rangle^\wired_{\D, \partial \D}}
\newcommand{\ol}{\overline}
\title{The height gap of planar Brownian motion is $\frac{5}{\pi}$}
\author{Antoine Jego\thanks{CNRS \& CEREMADE, Université Paris-Dauphine, PSL University, France; antoine.jego@dauphine.psl.eu} \and Titus Lupu\thanks{Sorbonne Université and Université Paris Cité, CNRS, Laboratoire de Probabilités, Statistique et Modélisation, F-75005 Paris, France; titus.lupu@sorbonne-universite.fr} \and Wei Qian\thanks{Department of Mathematics, Run Run Shaw Building, The University of Hong Kong, Pokfulam, Hong Kong; on leave from CNRS, Laboratoire de Math\'ematiques d'Orsay, Universit\'e Paris-Saclay; weiqian0@hku.hk}}
\date {}
\numberwithin{equation}{section}
\begin{document}

\maketitle

\abstract{
We show that the occupation measure of planar Brownian motion exhibits a constant height gap of $5/\pi$ across its outer boundary.
This property bears similarities with the celebrated results of Schramm--Sheffield \cite{MR3101840} and Miller--Sheffield \cite{MS} concerning the height gap of the Gaussian free field across SLE$_4$/CLE$_4$ curves. Heuristically, our result can also be thought of as the $\theta \to 0^+$ limit of the height gap property of a field built out of a Brownian loop soup with subcritical intensity $\theta>0$, proved in our recent paper \cite{JLQ23b}.
To obtain the explicit value of the height gap, we rely on the computation by Garban and Trujillo Ferreras \cite{MR2217292} of the expected area of the domain delimited by the outer boundary of a Brownian bridge.}

\setcounter{tocdepth}{1}
\tableofcontents

\section{Introduction}

In the eighties, Mandelbrot \cite{MR0665254} conjectures that the outer boundary of planar Brownian motion is a fractal curve whose Hausdorff dimension equals $4/3$.
The outer boundary, and more generally the geometry of planar Brownian motion, has attracted a lot of attention ever since (many results and references can be found in the book \cite{morters2010brownian}).
We mention that, in \cite{MR1879851,MR1961197} (also see \cite{Lawler2000TheDO}), Lawler, Schramm and Werner prove Mandelbrot's conjecture. Actually, the outer boundary turns out to be a random continuous self-avoiding curve which is distributed as a version of Schramm--Loewner Evolution (SLE) with parameter $\kappa = 8/3$ \cite{MR1992830}.
The main result of the current article is that occupation measure of a 2D Brownian trajectory exhibits a constant height gap of $5/\pi$ across its outer boundary. Due to conformal invariance of planar Brownian motion (in particular invariance under the inversion map $z\mapsto 1/z$), the same result in fact also holds for the boundary of any connected component of the complement of a 2D Brownian trajectory.

\medskip

By definition, the outer boundary of a planar Brownian motion $\Pc$, that we denote by $\out(\Pc)$, is the boundary of the unbounded connected component of $\C \setminus \Pc([0,T(\Pc)]),$ where $T(\Pc)$ stands for the lifetime, or duration, of $\Pc$.
On the other hand, the occupation measure of $\Pc$, that we denote by $L_x(\Pc) \d x$, is a random Borel measure on $\C$ that assigns to each Borel set $A \subset \C$ the total time spent by $\Pc$ in $A$:
\begin{equation}
\label{E:occ_measure}
\int_A L_x(\Pc) \d x = \int_0^{T(\Pc)} \indic{\Pc_t \in A} \d t.
\end{equation}
Although we write $L_x(\Pc)\d x$, we emphasise that the occupation measure is almost surely not absolutely continuous with respect to Lebesgue measure.

To state our main result, let $\Pc = (\Pc_t)_{t \in [0,1]}$ be a standard two-dimensional Brownian motion which starts at the origin and denote by $\nu$ the law of its outer boundary $\out(\Pc)$. 
For any continuous loop $\gamma$, we denote by $\inte(\gamma)$ the complement of the closure of the unbounded component of $\C\setminus\gamma$.
The measure $\nu$ gives a positive mass to simple loops. But it also gives a positive mass to loops $\gamma$ with cut points, in which case $\inte(\gamma)$ is formed of countably infinitely many connected components, called beads in \cite{zbMATH02055258}.

\begin{theorem}\label{T:intro}
For $\nu$-almost all loop $\gamma$, the following holds.
Let $\xi \subset \gamma$ be a simple curve (or $\xi$ can be $\gamma$ if $\gamma$ is a simple loop), which is measurable with respect to $\gamma$.
Let $(f_\eps)_\eps$ be a sequence of test functions $f_\eps : \inte(\gamma) \to \R$ that are measurable with respect to $\gamma$ and such that $\int f_\eps = 1$ and $\{ f_\eps \neq 0 \} \subset \{ x \in \inte(\gamma), \d(x,\xi) < \eps \}$ for all $\eps >0$. Assume further that they satisfy the integrability conditions of Assumption~\ref{assumption_feps_main}.
Then
\begin{equation}
\label{E:T_intro_bc}
\int f_\eps(x) L_x(\Pc) \d x \xrightarrow[\eps \to 0]{} \frac{5}{\pi} \qquad \text{in} \quad \mathrm{L}^1(\P(\cdot \vert \out(\Pc) = \gamma)).
\end{equation}
\end{theorem}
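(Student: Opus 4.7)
I would prove the $\mathrm{L}^1$ convergence in \eqref{E:T_intro_bc} in two stages: first identify a deterministic limit by a first-moment computation paired against arbitrary test functionals of $\out(\Pc)$, and then remove the fluctuations by a matching second-moment bound. By duality, it suffices to show that, for every bounded continuous functional $\Psi$ on the space of self-avoiding loops,
\begin{equation}
\label{E:plan_first}
\E\left[\Psi(\out(\Pc)) \int f_\eps(x) L_x(\Pc) \d x\right] \xrightarrow[\eps \to 0]{} \frac{5}{\pi}\, \E[\Psi(\out(\Pc))],
\end{equation}
together with the analogous statement where $\int f_\eps L \d x$ is replaced by its square. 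The occupation-measure identity $\int f_\eps(x) L_x(\Pc) \d x = \int_0^1 f_\eps(\Pc_t) \d t$ rewrites the left-hand side of \eqref{E:plan_first} as $\int_0^1 \E[\Psi(\out(\Pc)) f_\eps(\Pc_t)] \d t$, an average over times at which $\Pc_t$ sits in an $\eps$-neighbourhood of its own outer boundary from the inside.

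\textbf{Extracting a universal constant via bubbles.} For fixed $t \in (0,1)$, conditioning on $\Pc_t = x$ splits $\Pc$ into two independent Brownian motions $\Pc^-, \Pc^+$ of durations $t$ and $1-t$ starting at $x$. As $\eps \to 0$, the constraint $f_\eps(\Pc_t) \neq 0$ degenerates to $x \in \out(\Pc^-\cup \Pc^+)$ approached from the inside, a singular event that I would interpret probabilistically by describing, under $\P(\cdot \mid \out(\Pc)=\gamma)$, the remaining trajectory as $\gamma$ together with an independent collection of Brownian bubbles rooted on $\gamma$, in the spirit of the decomposition developed in \cite{JLQ23b}. A Campbell-type first-moment computation on this Poissonian bubble collection, combined with the conformal invariance of the bubble measure along $\gamma$, shows that the left-hand side of \eqref{E:plan_first} converges to a universal constant $c$ times $\E[\Psi(\out(\Pc))]$, independent of $\Psi$ and of the sequence $(f_\eps)_\eps$. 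To identify $c = 5/\pi$, I would specialise to $\Psi \equiv 1$ and exploit two global identities: the almost-sure conservation $\int_{\inte(\gamma)} L_x(\Pc) \d x = T(\Pc) = 1$, and the area formula $\E[\area(\inte(\out(\Pc)))] = \pi/5$ provided by \cite{MR2217292}; expressing both via the bubble expansion (each bubble carrying its own enclosed area that sums, together with the trivial contribution of $\gamma$ itself, to the total interior area) forces $c = 1/(\pi/5) = 5/\pi$, in agreement with the $\theta \to 0^+$ limit of the subcritical loop-soup height gap established in \cite{JLQ23b}.

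\textbf{Second moment and main obstacle.} The $\mathrm{L}^1$ upgrade then follows from a variance estimate obtained by a two-point pinning $(\Pc_s, \Pc_t) = (x_1, x_2)$, which splits the trajectory into three independent Brownian motions; the bubble expansion becomes a sum over pairs of bubbles, where the diagonal (same-bubble) contribution vanishes as $\eps \to 0$ and the off-diagonal contribution factorises into $(5/\pi)^2 \E[\Psi(\out(\Pc))]$, matching the square of the first moment and so yielding $\mathrm{L}^2$, hence $\mathrm{L}^1$, convergence under $\P(\cdot \mid \out(\Pc) = \gamma)$. The main technical difficulty, and where I expect the bulk of the work to lie, is making the conditional bubble decomposition rigorous at the level of the outer boundary: since $\out(\Pc)$ has Hausdorff dimension $4/3$ with SLE$_{8/3}$-type local regularity, the implicit ``surface measure'' along $\gamma$ that governs the first-moment calculation is a Minkowski-type content that is delicate to control. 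Proving that the limits in \eqref{E:plan_first} are genuinely independent of the approximation $(f_\eps)_\eps$ allowed by Assumption~\ref{assumption_feps_main}, and uniformly estimating the remainder terms in the bubble expansion, mirrors the challenges faced when sending the loop-soup intensity to zero in \cite{JLQ23b}.
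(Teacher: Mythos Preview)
Your proposal contains a genuine gap at its structural core: you assume that, conditionally on $\out(\Pc)=\gamma$, the remaining trajectory decomposes as an \emph{independent Poissonian} collection of Brownian bubbles rooted on $\gamma$, and you invoke a ``Campbell-type'' first-moment identity on that basis. This is precisely the point where a single Brownian loop differs from a loop soup. As noted explicitly in the introduction (and proved in \cite[Section~3]{MR3901648}), the point process of excursions induced by one Brownian loop on its outer boundary is \emph{not} a Poisson point process; the endpoints carry nontrivial dependence. Hence neither the Campbell formula for the first moment nor the automatic off-diagonal factorisation you invoke for the second moment is available. Your appeal to the $\theta\to 0^+$ limit of \cite{JLQ23b} is likewise problematic: the paper stresses that this limit is singular (loop soups at different intensities are mutually singular and their clusters are drastically different), which is exactly why the argument cannot be imported from the subcritical case.

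The paper circumvents the lack of Poisson structure in two different ways. For the first moment, it does not use any Campbell identity: it passes via $\loopm$ and the conformally invariant measure $\pint$ of Proposition~\ref{prop:decomp}, whose invariance under M\"obius maps of $\D$ forces the expected occupation density to be a constant $\lambda_0$ (Corollary~\ref{C:1point_constant}); $\lambda_0=5/\pi$ is then extracted from the Garban--Trujillo Ferreras area computation as you suggest, but through the disintegration \eqref{E:decompo_loopmeasure} rather than a bubble expansion. For the second moment, the factorisation you posit is the main content of the paper (Lemma~\ref{L:second_good}): it is obtained not by independence of bubbles but by a \emph{partial exploration} of the outer boundary yielding a one-sided restriction measure (Lemma~\ref{lem:wired}), a decomposition into a locally finite---but non-Poisson---point process of excursions (Proposition~\ref{prop:pp}), and a change of measure that makes the ``top'' excursion near one boundary point conditionally independent of the rest. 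Your two-point pinning at times $(s,t)$ does not produce this structure, because the three resulting Brownian pieces all contribute to $\out(\Pc)$ in an entangled way; the decorrelation of $L_{z_\eps}$ and $L_{w_\eps}$ does not follow.
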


Assumption~\ref{assumption_feps_main}, which will be stated later, holds for a wide range of functions. In particular, we can consider a sequence $(f_\eps)_\eps$ concentrated near a tiny given portion of the boundary, not the entire boundary. The property near such a tiny portion does not depend on the behavior of the Brownian motion far away.
Therefore, Theorem~\ref{T:intro} can be read as an almost-sure statement about the local behavior of a planar Brownian motion. The global behavior of the Brownian motion (such as time duration, endpoints) is not important. In Theorem \ref{T:main}, we prove an analogous result for a Brownian loop.

Note that Theorem~\ref{T:intro} is stated directly for the outer boundary $\gamma$ which is an SLE$_{8/3}$-type fractal curve. One can in fact also map such a fractal curve to a smooth curve via conformal maps. In Theorem~\ref{T:level_line}, we conformally send the domain encircled by the outer boundary of a Brownian loop onto the unit disk, and show that the resulting occupation field in the unit disk also has the same height gap $5/\pi$ on the boundary. 


\paragraph{Height gap of the Gaussian free field and Brownian loop soup}

Theorem \ref{T:intro} is a height gap property for the occupation measure across its outer boundary $\gamma$: it is constant and equal to $5/\pi$ on $\gamma$ when approaching $\gamma$ from inside, whereas it is equal to zero on $\gamma$ when approaching $\gamma$ from outside.

As discovered by Schramm--Sheffield \cite{MR3101840} and Miller--Sheffield \cite{MS}, the Gaussian free field also exhibits such height gap properties. Indeed, one can couple it with a collection of self-avoiding loops, the conformal loop ensemble CLE$_4$ with parameter $\kappa=4$. Outside of these loops (in the CLE carpet), the free field vanishes, whereas inside each loop the value of the field jumps by an additive factor $\pm 2 \lambda$, where $\lambda >0$ is explicit.


In \cite{JLQ23b}, we construct and study the properties of a conformally invariant field $h_\theta$ defined out of a Brownian loop soup with parameter $\theta \in (0,1/2]$ (a Poisson point process of Brownian loops with intensity $2\theta$ times a loop measure $\loopm$ \eqref{E:loopm}). Informally, $h_\theta$ corresponds to a signed version of the local time of the loop soup to the power $1-\theta$. When $\theta =1/2$, this field coincides with a Gaussian free field. 
We show in \cite{JLQ23b} that, surprisingly, the height gap property generalises to any value of $\theta \in (0,1/2]$ across self-avoiding loops distributed according to CLE$_\kappa$, where $\kappa = \kappa(\theta) \in (8/3,4]$. The value of the height gap is however explicitly known only when $\theta=1/2$.
Theorem \ref{T:intro} can be thought of as the limit $\theta \to 0^+$ of this result. Informally, $h_\theta$ becomes simply the local time of a single loop and the CLE$_\kappa$ becomes an SLE$_{8/3}$ loop (recall that $\kappa(\theta) \to 8/3$ as $\theta \to 0^+$).

We remark that for a critical loop soup, the excursions induced by the loops that touch the cluster boundary is a Poisson point process of excursions \cite{QianWerner19Clusters}, hence the occupation field of these excursion on the boundary is constant by a law of large numbers. This constant can be shown to be $\pi/4$, thanks to isomorphism with the GFF. In our case, the set of excursions induced by a Brownian loop (by Proposition~\ref{prop:pp}) is not a Poisson point process (by the arguments of \cite[Section 3]{MR3901648}). It has a priori a rather intricate law, but interestingly it still has a constant boundary occupation time $5/\pi$. We expect that for a loop-soup with intensity $\theta$ going from $0$ to $1/2$, the occupation field induced by the loops that touch the cluster boundary should vary continuously from $5/\pi$ to $\pi/4$. It is an interesting open question to work out the exact value as a function of $\theta$.

Let us however emphasise that the current paper does not rely on the height gap property for the loop soup. Indeed, trying to make sense of the limit $\theta \to 0^+$ of the loop soup result would be difficult; for instance, two loop soups with distinct intensities $\theta_1$ and $\theta_2$, even when naturally coupled, are mutually singular and their respective clusters are drastically different.
Instead, we will work directly with a Brownian trajectory. In addition and contrary to the loop soup case, we compute explicitly the value of the height gap in our setting.

\paragraph{Some proof ideas}

We now comment on some of the ideas involved in the proof of Theorem \ref{T:intro}. We start by considering a Brownian trajectory which possesses a higher level of symmetry than a plain Brownian motion stopped after one unit of time. The trajectory that we will consider will be sampled according to the Brownian loop measure. Proceeding as in \cite[Section 3]{MR3901648}, we conformally map the inside of the outer boundary of the loop to the unit disc $\D$, and get a Brownian loop in $\D$, ``conditioned'' on the event that its outer boundary agrees with the unit circle $\partial \D$. We will denote the resulting law by $\pint$. It has the nice feature that it is invariant under any conformal map preserving $\D$. As a consequence of this symmetry, we will show in Lemma~\ref{L:1point_constant} that the expectation of the associated occupation measure is a constant $\lambda_0$ times Lebesgue measure in $\D$. 

The computation of the value of the constant $\lambda_0$ is achieved in Lemma \ref{L:lambda}. As we will see, we relate it to the expectation of the area of the domain delimited by $\gamma$ (it naturally appears for instance on the right hand side of \eqref{E:T_main_expectation} when one takes $f=1$). We eventually rely on a result of Garban and Trujillo Ferreras \cite{MR2217292}: the expected area of the domain delimited by the outer boundary of a Brownian bridge of duration 1 equals $\pi/5$.

At this stage, we know that, under the law $\pint$, the occupation measure has a constant expectation which has an explicit value. We want to show that it concentrates around its expectation when integrated against test functions whose supports are close to the unit circle.
To do so, we will show in Lemma \ref{L:second_good} that the occupation measures of two small sets, near $\partial \D$ and at a macroscopic distance to each other, become asymptotically uncorrelated. This decorrelation property consists in the most involved part of the paper.
It starts with a partial exploration of the outer boundary of a Brownian loop together with a decomposition of the resulting loop into conditionally independent excursions (analogous results were obtained for loop soups in \cite{QianWerner19Clusters,MR3901648,qian2018}). As a key tool, we will also use estimates that we derived in \cite{JLQ23b} for general point processes of Brownian excursions which satisfy some conformal restriction property; see Corollaries \ref{C:JLQ}, \ref{C:interval_excursion} and \ref{C:upper_bound_2point}.

\medskip

We finish this introduction by stating the integrability assumptions satisfied by the sequence of functions $(f_\eps)_\eps$ in Theorem \ref{T:intro}. Given a continuous loop $\gamma$, we assume:

\begin{assumption}\label{assumption_feps_main}
For all $\eps >0$,
$\int_{\inte(\gamma)} f_\eps = 1$ and $\{ f_\eps \neq 0 \} \subset \{ x \in \C, \d(x,\gamma) < \eps \}$.
Moreover,
\begin{gather}
\label{E:assumption_feps_main}
\sup_\eps \int_{\inte(\gamma) \times \inte(\gamma)} \max(1, -\log|x-y|) f_\eps(x) f_\eps(y) \,\d x \,\d y < \infty,\\
\label{E:assumption_feps_main2}
\text{and} \quad \lim_{\delta \to 0} \limsup_{\eps \to 0} \int_{\inte(\gamma) \times \inte(\gamma)} \indic{|x-y| < \delta} \max(1, -\log|x-y|) f_\eps(x) f_\eps(y) \,\d x \,\d y = 0.
\end{gather}
In Section \ref{S:proof_main}, we give a broad family of examples of test functions satisfying these assumptions; see Example \ref{Example} and Lemma \ref{L:equivalence_assumptions}.
\end{assumption}

\paragraph{Organisation of the paper}

\begin{itemize}
    \item Section \ref{S:preliminaries}: We define more precisely our setup and state a version of Theorem \ref{T:intro} for a path distributed according to the Brownian loop measure (see Theorem \ref{T:main}). We will then recall and prove some preliminary results.
    \item Section \ref{S:conditioning}: We show that we can condition a Brownian loop on a portion of its outer boundary and study the resulting probability law.
    \item Section \ref{S:proof_main}: This section is the main contribution of the current paper where we prove our main result Theorem \ref{T:intro}.
\end{itemize}

\section{Setup and preliminaries}\label{S:preliminaries}

In this section, we will first describe precisely the setup we will be working with. We will then state a version of Theorem \ref{T:intro} that is of independent interest for a path ``sampled'' according to the loop measure $\loopm$ \eqref{E:loopm}; see Theorem \ref{T:main}. The rest of the section will then be dedicated to some preliminary results.

\medskip

\noindent\textbf{The Brownian loop measure.}
Our approach to Theorem \ref{T:intro} crucially relies on the Brownian loop measure. 
Let us recall its definition and some important facts.
As introduced by \cite{MR2045953},
the Brownian loop measure on the plane $\C$ is defined by
\begin{equation}\label{E:loopm}
\loopm(\d \Pc) = \int_\C \d z \int_0^\infty \frac{\d t}{t} \frac{1}{2\pi t} \P^{t,z,z}(\d \Pc),
\end{equation}
where $\P^{t,z,z}$ denotes the probability law of a Brownian bridge from $z$ to $z$ with duration $t$. In this paper, we consider standard Brownian motion with generator $\frac12 \Delta$. We will view the measure $\loopm$ as a measure on the following space $\Lc$ of loops:

\begin{definition}[Space of loops $\Lc$]\label{Def:space_loops}
It is the space of unrooted loops, up to monotone time reparametrisation. It is defined as the space of continuous maps from the unit circle $\mathbb{S}^1$ to $\C$ where two maps $\Pc_1$ and $\Pc_2$ are identified if there exist $\theta_0 \in [0,2\pi]$ and an increasing bijection $\sigma : [0,2\pi] \to [0,2\pi]$ such that $\Pc_1(e^{i(\theta+\theta_0)}) = \Pc_2(e^{i \sigma(\theta)})$ for all $\theta \in [0,2\pi]$. We will endow $\Lc$ with the following metric: for two loops $\Pc_1$ and $\Pc_2$, let
\begin{equation}
\label{E:distance_Lc}
\d_\Lc(\Pc_1,\Pc_2) = \inf_{\theta_0,\sigma} ~ \sup ~\{ | \Pc_1(e^{i(\theta+\theta_0)}) - \Pc_2(e^{i \sigma(\theta)}) |, \theta \in [0,2\pi] \}
\end{equation}
where the infimum runs over $\theta_0 \in [0,2\pi]$ and increasing bijections $\sigma : [0,2\pi] \to [0,2\pi]$. We will then use the topology and $\sigma$-algebra naturally associated to this metric.
\end{definition}

\begin{definition}[Space of self-avoiding loops $\Gamma$]\label{Def:space_self-avoiding}
It is the subset of $\Lc$ consisting of injective maps $\mathbb{S}^1$ to $\C$. We denote this space by $\Gamma$. Following \cite{MR2350053}, it is more natural to consider the following $\sigma$-algebra. An annular region $A \subset \C$ is a subset of $\C$ that is conformally equivalent to an annulus $\{z \in \C: 1< |z| < R \}$. For any such $A$, let $\Gamma_A$ be the subset of self-avoiding loops which are included in $A$ and that separates the two boundary components of $A$. We will equip $\Gamma$ with the $\sigma$-algebra generated by this family.
\end{definition}

It will be convenient for us to consider loops as unrooted and up to time parametrisation since, in this way, the loop measure $\loopm$ is conformally invariant \cite{MR2045953}. We now mention a few more important results.
The push forward of the measure $\loopm$ by the map $\Pc \in \Lc \mapsto \out(\Pc) \in \Gamma$ (whose measurability can be checked by considering $\out^{-1}(\Gamma_A)$ for all annular region $A$ \cite{MR2350053}) is equal to an SLE$_{8/3}$ loop measure \cite{MR2350053}. This latter measure is a measure on the space $\Gamma$ that we denote by $\nu_\SLE$.
Moreover, the Brownian loop measure $\loopm$ factorises into
\begin{equation}
\label{E:decompo_loopmeasure}
\loopm(\d \Pc) = \int \loopm(\d \Pc \vert \gamma) \nu_\SLE(\d \gamma),
\end{equation}
where $\loopm(\d \Pc \vert \gamma)$ is now a probability measure: the Brownian loop measure ``conditioned on the event that the outer boundary agrees with $\gamma$''. Additionally and as shown in \cite{MR3901648}, if one conformally maps the interior of $\gamma$ to the unit disc $\D$, the push forward of $\loopm(\d \Pc \vert \gamma)$ is now a probability measure on loops in $\overline{\D}$ that does not depend on $\gamma$, or on the specific choice of conformal map.
See Proposition \ref{prop:decomp} below for a precise statement.

\medskip

\noindent \textbf{Recovering the time parametrisation.}
Although we consider loops, and more generally paths, up to time reparametrisation, we now explain that we can recover the original time parametrisation deterministically from the path modulo parametrisation.

To recover the original time parametrisation, one simply needs to notice that the occupation measure of a Brownian path $\Pc$ is solely a function of its trace $\Kc = \Pc([0,T(\Pc)])$.
Indeed, it is equal to the Minkowski content of $\Kc$ with gauge function $r \mapsto \frac1\pi r^2 |\log r|^2$. That is, for any open set $A \subset \C$,
\begin{equation}
    \label{E:Minkowski123}
\int_A L_x(\Pc) \d x = \frac1\pi \lim_{r \to 0} |\log r| \, |A \cap \Kc^r|
\end{equation}
where $\Kc^r$ is the $r$-enlargement of $\Kc$, i.e. the set of points at distance at most $r$ to $\Kc$, and $|A \cap \Kc^r|$ denotes the Lebesgue measure of $A \cap \Kc^r$.
The above convergence holds in $L^p$ for all $p \ge 1$.
This result is folklore and can be proven by the method of moments.
A related and more difficult result by Taylor \cite{Taylor_1964} states that the occupation measure agrees with a constant multiple of the Hausdorff measure of the path in the gauge $r \mapsto r^2 \log \frac1r \log \log \log \frac1r$.
As a result, the occupation measure, and thus the parametrisation of the path, can be recovered only from the path modulo time reparametrisation.
From now on, we will not mention time parametrisation any more and consider loops up to time reparametrisation, i.e. work with the space $\Lc$.

\medskip

We now state a version of Theorem \ref{T:intro} for $\Pc$ sampled according to $\loopm(\cdot \vert \gamma)$. It contains an additional property (\eqref{E:T_main_expectation} below) that is reminiscent of the conformal invariance of $\loopm$.

\begin{theorem}\label{T:main}
For $\nu_\SLE$-almost all $\gamma$, and $\Pc$ sampled according to $\loopm(\cdot \vert \gamma)$, the following holds.
\begin{itemize}
\item
(Constant expectation)
For all test function $f : \C \to \R$ measurable w.r.t. $\gamma$,
\begin{equation}
\label{E:T_main_expectation}
\loopm \Big[ \int f(x) L_x(\Pc) \d x \vert \gamma\Big] = \frac{5}{\pi} \int_{\inte(\gamma)} f.
\end{equation}
\item
(Constant boundary conditions) Let $(f_\eps)_\eps$ be a sequence of test functions $f_\eps : \inte(\gamma) \to \R$ that are measurable w.r.t. $\gamma$ and such that $\int f_\eps = 1$ and $\{ f_\eps \neq 0 \} \subset \{ x \in \inte(\gamma), \d(x,\gamma) < \eps \}$ for all $\eps >0$. Assume further that they satisfy the integrability conditions of Assumption~\ref{assumption_feps_main}.
Then
\begin{equation}
\label{E:T_main_bc}
\int f_\eps(x) L_x(\Pc) \d x \xrightarrow[\eps \to 0]{L^1} \frac{5}{\pi}.
\end{equation}
\end{itemize}
\end{theorem}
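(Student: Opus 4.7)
The plan is to prove the two assertions separately. For the constant-expectation identity \eqref{E:T_main_expectation}, I reduce to the disc via the conformal decomposition of the Brownian loop measure recalled in Proposition~\ref{prop:decomp}: pick a conformal map $\phi_\gamma\colon\D\to\inte(\gamma)$ and observe that $\wt\Pc:=\phi_\gamma^{-1}(\Pc)$ is distributed as $\pint$, which is independent of $\gamma$ and invariant under every M\"obius self-map of $\D$ (by conformal invariance of $\loopm$). Because the Jacobian of the conformal change of variables cancels the time-change factor $|\phi_\gamma'|^2$ arising from Dambis--Dubins--Schwarz, the occupation density is a conformal invariant as a scalar function, and in particular $L_y(\Pc)=L_{\phi_\gamma^{-1}(y)}(\wt\Pc)$ for almost every $y\in\inte(\gamma)$. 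Combining with M\"obius invariance of $\pint$, the map $u\mapsto\Eint[L_u(\wt\Pc)]$ is invariant under every M\"obius self-map of $\D$ and is therefore constant; call this constant $\lambda_0$. Pulling back, $\E^{\loopm(\cdot|\gamma)}[L_y(\Pc)]=\lambda_0$ for a.e.\ $y\in\inte(\gamma)$, which immediately yields \eqref{E:T_main_expectation} with $\lambda_0$ in place of $5/\pi$.

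To identify $\lambda_0$, I use the Garban--Trujillo Ferreras area formula. Taking $f\equiv\mathbf{1}_{\inte(\gamma)}$ in the previous paragraph gives $\E^{\loopm(\cdot|\gamma)}[T(\Pc)]=\lambda_0\,\area(\inte(\gamma))$, so that for any nonnegative measurable $F$,
\begin{equation*}
\loopm\bigl[T(\Pc)\,F(\out(\Pc))\bigr] \;=\; \lambda_0\,\loopm\bigl[\area(\inte(\out(\Pc)))\,F(\out(\Pc))\bigr].
\end{equation*}
Unfolding $\loopm(\d\Pc)=\int_0^\infty\frac{\d t}{t}\frac{1}{2\pi t}\int\d z\,\P^{t,z,z}(\d\Pc)$, using Brownian scaling to transfer the $(t,z)$-dependence of $\P^{t,z,z}$ back to $\P^{1,0,0}$, and choosing $F(\gamma)=\chi(c(\gamma))\,\psi(\area(\inte(\gamma)))$ with a translation-equivariant centre $c$ and generic $\chi,\psi\geq 0$ to regularise the divergent $\int\d z$ and $\int\d t/t$ integrals, both sides collapse and the resulting identity reads $1/(2\pi)=\lambda_0\,\E^{\P^{1,0,0}}[\area(\inte(\out(\Pc)))]/(2\pi)$. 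Plugging in the value $\pi/5$ from \cite{MR2217292} gives $\lambda_0=5/\pi$.

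For the $L^1$ convergence, set $X_\eps:=\int f_\eps(x) L_x(\Pc)\,\d x$; the previous part gives $\E[X_\eps]=5/\pi$, so it suffices to check $\var(X_\eps)\to 0$. Writing $\var(X_\eps)=\int f_\eps(x) f_\eps(y)\,\operatorname{Cov}(L_x,L_y)\,\d x\,\d y$, I split the domain of integration into $|x-y|<\delta$ and $|x-y|\geq\delta$. For the near-diagonal piece, a logarithmic two-point upper bound $\operatorname{Cov}(L_x,L_y)\lesssim\max(1,-\log|x-y|)$ (Corollary~\ref{C:upper_bound_2point}, pulled back to $\inte(\gamma)$ via $\phi_\gamma$) combined with Assumption~\ref{assumption_feps_main2} makes this part vanish as $\delta\to 0$ uniformly in $\eps$. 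For the far-diagonal piece, Lemma~\ref{L:second_good} gives $\operatorname{Cov}(L_x,L_y)\to 0$ when $|x-y|\geq\delta$ and $x,y$ sit within $\eps$ of $\gamma$; together with the global bound this yields uniform vanishing, hence $\var(X_\eps)\to 0$ and the desired $L^1$ convergence.

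The main obstacle is Lemma~\ref{L:second_good}. Its proof rests on a partial exploration of the outer boundary of $\Pc$ together with the excursion decomposition of Proposition~\ref{prop:pp}, which describes the remainder of the loop, conditionally on the explored arc, as a (non-Poissonian) point process of Brownian excursions in the complement of the arc. Conformal-restriction estimates from \cite{JLQ23b}, imported here as Corollaries~\ref{C:JLQ} and \ref{C:interval_excursion}, then show that each individual excursion is typically localised near a short piece of the boundary, so that when $x$ and $y$ lie near disjoint boundary segments the excursions contributing to $L_x$ and $L_y$ are asymptotically disjoint, forcing the covariance to vanish. This geometric decoupling, rather than the conformal-invariance step, is the technical core of the argument.
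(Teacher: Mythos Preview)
Your treatment of the constant-expectation identity and the identification of $\lambda_0$ is correct and matches the paper's approach in spirit. Your regularisation $F(\gamma)=\chi(c(\gamma))\psi(\area(\inte(\gamma)))$ for the $\lambda_0$ computation is in fact tidier than the paper's asymptotic argument with loops of diameter $>\eps$ in $[0,1]^2$ (Lemma~\ref{L:lambda}); both routes reduce to the Garban--Trujillo Ferreras value $\pi/5$. One minor omission: you implicitly assume $u\mapsto\Eint[L_u(\wt\Pc)]$ exists as a pointwise density before invoking M\"obius invariance. The paper establishes this via the excursion formula of Lemma~\ref{L:1point}; you should either cite that or note that the expected occupation measure of any Brownian excursion has a smooth density, so that Fubini over the point process of Proposition~\ref{prop:pp} gives the claim.

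The genuine gap is in the $L^1$ convergence. You write that ``Lemma~\ref{L:second_good} gives $\operatorname{Cov}(L_x,L_y)\to 0$ when $|x-y|\geq\delta$'', but that is not what the lemma says. Lemma~\ref{L:second_good} bounds only the \emph{good-event restricted} second moment,
\[
\limsup_{\eps\to 0}\int \Eint\bigl[L_x(\Pc)L_y(\Pc)\,\mathbf{1}_{G'_\delta(x/|x|)\cap G'_\delta(y/|y|)}\bigr]\,f_\eps(x)f_\eps(y)\,\indic{|x-y|>\delta^{1/100}}\,\d x\,\d y \;\le\; (5/\pi)^2,
\]
and the indicator of $G'_\delta$ is essential: it excludes precisely the scenario in which a single long excursion visits neighbourhoods of both $x$ and $y$, and without it there is no reason for $\Eint[L_xL_y]$ itself to approach $(5/\pi)^2$. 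The paper closes this via a second, independent lemma you do not invoke, Lemma~\ref{L:first_good}, which shows that $\Eint[L_x(\Pc)\mathbf{1}_{G'_\delta(x/|x|)^c}]\to 0$ as $\delta\to 0$, uniformly for $x$ near $\partial\D$. The actual structure of the proof is the decomposition $I_\eps = I_\eps^{\mathrm g} + (I_\eps - I_\eps^{\mathrm g})$: the bad part $I_\eps-I_\eps^{\mathrm g}$ is controlled in $L^1$ by Lemma~\ref{L:first_good}, and only then is the good part $I_\eps^{\mathrm g}$ controlled in $L^2$ by Lemma~\ref{L:second_good}. Your final paragraph alludes to excursions being ``typically localised near a short piece of the boundary'', which is morally the content of Lemma~\ref{L:first_good}, but you fold it into the heuristic for Lemma~\ref{L:second_good} rather than deploying it as the separate first-moment estimate that makes the good-event truncation legitimate. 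Without this step, the variance argument as you have written it does not close.
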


In the remaining of this section, we collect preliminary results that we will need in the paper.

\subsection{Green's function and Poisson kernel}

We recall the definition and some explicit expressions of the Green's function and Poisson kernel for ease of future reference.
For any domain $D$, let $G_D$ be the Green's function in $D$. In our normalisation,
\begin{equation}
\label{E:Green}
\forall x,y \in \D,~
G_\D(x,y) = \frac{1}{\pi} \log \frac{|1-x\bar y|}{|x-y|}
\quad \text{and} \quad \forall x,y \in \Hb,~ G_\Hb(x,y) = \frac{1}{\pi} \log \frac{|x-\bar y|}{|x-y|}.
\end{equation}
Suppose that $\partial D$ is smooth. Then for $x\in D$ and $z\in \partial D$, the Poisson kernel is given by
$
H_D(x,z)=\lim_{\eps\to0} \eps^{-1} G(x, z+\eps \mathbf{n}_z),
$
where $\mathbf{n}_z$ is the inward unit normal vector of $\partial D$ at $z$.
We have
\begin{equation}
\label{E:Poisson}
\forall x \in \D, \forall z \in \partial \D,~
H_\D(x,z) = \frac{1}{2\pi} \frac{1-|x|^2}{|x-z|^2}
\quad \text{and} \quad
\forall x \in \Hb, \forall z \in \partial \Hb,~
H_\Hb(x,z) = \frac{\Im(x)}{\pi|x-z|^2}.
\end{equation}
For $z, w\in \partial D$, let the boundary Poisson kernel be
$
H_D(z,w)=\lim_{\eps\to0} \eps^{-1} H_D(z+\eps \mathbf{n}_z, w).
$
Under this normalisation, we have
\begin{equation}
\label{E:Poisson_boundary}
\forall z,w\in \partial \D,~ H_\D(z,w) = \frac{1}{\pi|z-w|^2}
\quad \text{and} \quad
\forall z,w \in \partial \Hb,~ H_\Hb(z,w) = \frac{1}{\pi(z-w)^2}.
\end{equation}
The Green's function and Poisson kernel are conformally invariant/covariant in the following sense. 
For any conformal map $f : D \to D'$, $x,y \in D$ and $z,w \in \partial D$ such that $\partial D$ (resp. $\partial D'$) is locally smooth near $z$ and $w$ (resp. near $f(z)$ and $f(z)$),
\begin{equation}
\label{E:Green_conformal}
G_{D'}(f(x),f(y)) = G_D(x,y),
\end{equation}
\begin{equation}
\label{E:Poisson_conformal}
H_{D'}(f(x),f(z)) = |f'(z)|^{-1} H_D(x,z)
\quad \text{and} \quad
H_{D'}(f(z),f(w)) = |f'(z)|^{-1}|f'(w)|^{-1} H_D(z,w).
\end{equation}

\medskip

We will also use the explicit expression of the Poisson kernel in a horizontal strip $\R + i (0,\pi h)$ where $h>0$: for all $x \in \R + i (0,\pi h)$ and $x_0 \in \R$,
\begin{align}
\label{E:Poisson_strip1}
H_{\R + i (0,\pi h)}(x,x_0) & = \frac{1}{2\pi h} \frac{\sin(\Im~x/h)}{\cosh((\Re~x - x_0)/h) - \cos (\Im~x/h)}, \\
H_{\R + i (0,\pi h)}(x,x_0+ i \pi h) & = \frac{1}{2\pi h} \frac{\sin(\Im~x/h)}{\cosh((\Re~x - x_0)/h) + \cos (\Im~x/h)}.
\end{align}
See for instance \cite{widder1961functions}.
(Thanks to the change of coordinate formula \eqref{E:Poisson_conformal}, this amounts to finding an explicit expression for a conformal transformation mapping the strip to the upper half plane, or the unit disc.)
The boundary Poisson kernel is given by: for all $x_1, x_2 \in \R$,
\begin{align}
H_{\R + i (0,\pi h)}(x_1,x_2) = H_{\R + i (0,\pi h)}(x_1+i\pi h,x_2+i\pi h) & = \frac{1}{2\pi h^2} \frac{1}{\cosh((x_1-x_2)/h)-1},\\
H_{\R + i (0,\pi h)}(x_1+i\pi h,x_2) = H_{\R + i (0,\pi h)}(x_1,x_2+i\pi h) & = \frac{1}{2\pi h^2} \frac{1}{\cosh((x_1-x_2)/h)+1}.\label{E:Poisson_strip4}
\end{align}
Moreover, for all $x \in \R + i (0,\pi h)$,
\begin{equation}\label{E:Poisson_integral}
    \int_{\R} H_{\R + i (0,\pi h)}(x,x_0) \,\d x_0 = 1 - \frac{\Im(x)}{\pi h}.
\end{equation}
Indeed, the left hand side corresponds to the probability that a 1D Brownian motion starting at $\Im(x)$ hits 0 before $\pi h$. The latter turns out to be equal to the right hand side term of \eqref{E:Poisson_integral}.

\subsection{Hulls and conformal maps}

We call a bounded subset $A\subset \Hb$ an $\Hb$-hull if $A=\Hb \cap \ol A$ and $\Hb \setminus A$ is simply connected. For every $\Hb$-hull $A$, there is a unique conformal transformation $g_A$ from $\Hb\setminus A$ onto $\Hb$ with 
\begin{align*}
\lim_{z\to\infty} [g_A(z)-z]=0.
\end{align*}
We recall the following result:
\begin{lemma}[Proposition 3.38, \cite{MR2129588}]\label{lem:imaginary}
Suppose that $A$ is a $\Hb$-hull, $(B_t)_{t \geq 0}$ is a planar Brownian motion, and let $\tau$ be the smallest $t$ such that $B_t\in \Rb \cup A$. Then for all $z\in \Hb\setminus A$,
\begin{align*}
\Im(z)=\Im(g_A(z)) - \Eb_z[\Im(B_\tau)].
\end{align*}
\end{lemma}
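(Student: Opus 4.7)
I would prove this via a Dirichlet problem / optional-stopping argument applied to the bounded harmonic function $u(z) := \Im(z) - \Im(g_A(z))$ on $\Hb \setminus A$. Since $\Im$ is harmonic on $\Hb$ and $g_A$ is holomorphic on $\Hb \setminus A$, both $\Im(z)$ and $\Im(g_A(z))$ are harmonic; hence so is $u$. The hydrodynamic normalization $g_A(z) - z \to 0$ as $z \to \infty$ gives $u(z) \to 0$ at infinity, and $u$ is bounded on bounded subsets of $\Hb \setminus A$.

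Next I would identify the boundary values of $u$. On the portion of $\Rb$ outside the base of $A$, both $\Im(z) = 0$ and $\Im(g_A(z)) = 0$ (since $g_A$ extends across $\Rb$ to send real boundary into $\Rb$), so $u \equiv 0$ there. On the hull boundary $\partial A \cap \overline{\Hb}$, $g_A$ extends to the prime ends and maps them into a bounded real interval, so $\Im(g_A(z)) \to 0$; hence $u(z) \to \Im(z)$ as $z \to \partial A$. In both cases the boundary value of $u$ agrees with $\Im(B_\tau)$, where $B_\tau \in \Rb \cup \partial A$ is the exit point.

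The conclusion then follows from optional stopping applied to the bounded martingale $u(B_{t \wedge \tau})$. Since planar Brownian motion hits $\Rb$ in finite time, $\tau < \infty$ almost surely, and dominated convergence gives $u(z) = \Eb_z[u(B_\tau)] = \Eb_z[\Im(B_\tau)]$. Rearranging the identity $\Im(z) - \Im(g_A(z)) = \Eb_z[\Im(B_\tau)]$ produces the formula stated in Lemma~\ref{lem:imaginary}.

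\textbf{Main obstacle.} The only delicate step is the continuous boundary extension at $\partial A$: one needs $\Im(g_A(z)) \to 0$ whenever $z$ approaches $\partial A$ from within $\Hb \setminus A$, across all prime ends. For hulls with smooth boundary this is immediate from Schwarz reflection; for a general $\Hb$-hull it follows from Carathéodory's prime-end theorem (every prime end of $\partial A$ is mapped by $g_A$ to a point of $\Rb$). Alternatively, one can approximate $A$ by an increasing family of smooth hulls $A_n$, use that $g_{A_n} \to g_A$ locally uniformly on $\Hb \setminus A$ and that $\Eb_z[\Im(B_{\tau_n})] \to \Eb_z[\Im(B_\tau)]$ by monotone/dominated convergence, and pass to the limit.
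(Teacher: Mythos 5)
The paper does not prove this lemma; it is quoted directly from Lawler's book (Proposition 3.38 of \cite{MR2129588}), so there is no paper proof to compare against. Your argument is the standard one and it is essentially correct: $u(z) = \Im(z) - \Im(g_A(z))$ is harmonic on $\Hb\setminus A$, bounded, tends to $0$ at $\infty$ by the hydrodynamic normalization, vanishes on $\R\setminus A$, and approaches $\Im(z)$ at boundary points of $A$ because $g_A$ sends all prime ends of the hull into $\R$; optional stopping then gives $u(z) = \Eb_z[\Im(B_\tau)]$.

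There is, however, a sign issue you should not paper over. Your (correct) conclusion is
\begin{equation*}
\Im(z) - \Im(g_A(z)) \;=\; \Eb_z[\Im(B_\tau)], \qquad\text{i.e.}\qquad \Im(z) \;=\; \Im(g_A(z)) + \Eb_z[\Im(B_\tau)],
\end{equation*}
whereas the lemma as printed in the paper reads $\Im(z) = \Im(g_A(z)) - \Eb_z[\Im(B_\tau)]$. These are not the same, and your closing sentence (``rearranging \dots produces the formula stated'') asserts otherwise. The discrepancy is a typo in the paper, not in your derivation: taking $A$ the upper unit half-disc with $g_A(z) = z + 1/z$, at $z = iy$ one has $\Im(g_A(iy)) = y - 1/y < y$, so $\Im(z) - \Im(g_A(z)) > 0 = -\Eb_z[\Im(B_\tau)]$ is impossible under the paper's sign. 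Moreover, the paper itself uses the lemma in the proof of Lemma~3.3 to conclude $v(z) = \Im(z - g_A(z)) \ge 0$, which is consistent with your sign and not with the formula as literally written. So your mathematics is right, but you should have flagged the inconsistency rather than claiming your identity rearranges into the stated one. Your remarks on boundary regularity at the prime ends of $A$ (and the fallback via smooth approximating hulls) are appropriate.
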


The following lemma is the main result of this section. It gives a quantitative control on the distance between $g_A$ and the identity map, for points not too close to the set $A$. 
For $\eps>0$, let
\begin{equation}\label{E:rectangles}
    R_1 = \{ x + i y: x \in [-1,\eps], y \in [0,\eps]\} \qquad \text{and} \qquad R_2 = \{ x + i y, x \in [-1-\eps,2\eps], y \in  [0,2\eps]\}.
\end{equation}

\begin{lemma}\label{L:hull} 
There exists $c<\infty$ such that if $\eps\le 1/10$ and $A$ is an $\Hb$-hull contained in the rectangle $R_1$, then for all $z\in\Hb\setminus R_2$, we have
\begin{equation}\label{eq:hull}
|g_A(z) - z| \le c\eps |\log \eps|.
\end{equation}
\end{lemma}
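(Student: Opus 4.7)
My plan is to combine Lemma~\ref{lem:imaginary} with a Herglotz-Nevanlinna integral representation for the Loewner map $g_A$. Lemma~\ref{lem:imaginary} immediately controls the imaginary part of $g_A(z)-z$: for any $z\in \Hb\setminus A$, $|\Im(g_A(z)-z)| = \E_z[\Im B_\tau] \leq \eps$, since $B_\tau \in \R \cup \partial A$ and $A\subset R_1$ forces $\Im B_\tau \leq \eps$. The real part requires more work, and this is where the $|\log \eps|$ factor appears.

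To get at it, I would work with the inverse map. The function $F(w) := w - g_A^{-1}(w)$ is holomorphic on $\Hb$ with $F(\infty)=0$, and applying Lemma~\ref{lem:imaginary} at $z=g_A^{-1}(w)$ gives $-\Im F(w) \in [0,\eps]$ on all of $\Hb$. Hence $-F$ is a Nevanlinna function on $\Hb$ vanishing at infinity, so the Herglotz representation yields
\begin{equation*}
g_A^{-1}(w) - w \;=\; \int_J \frac{\rho(x)\,dx}{x-w}, \qquad w \in \Hb,
\end{equation*}
where $J\subset \R$ is the image of $\partial A\cap\bar\Hb$ under $g_A$, an interval of length $O(1)$ (as $\diam A = O(1)$ and $J$ has length bounded by a constant multiple of $\diam A$), and $\rho\geq 0$. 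By Stieltjes inversion, $\pi\rho(x)=\lim_{\delta\to 0^+}\Im\bigl(g_A^{-1}(x+i\delta)-x\bigr)\leq \eps$, so $\rho\leq \eps/\pi$ pointwise.

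Since $g_A(z)-z = F(g_A(z))$, this gives
\begin{equation*}
|g_A(z) - z| \;\leq\; \frac{\eps}{\pi} \int_J \frac{dx}{|x - g_A(z)|}.
\end{equation*}
The remaining task is to check that $w := g_A(z)$ stays at distance $\gtrsim \eps$ from $J$ for $z \in \Hb\setminus R_2$; then an elementary computation (arcsinh-type antiderivative of $1/|x-w|$ over an interval of length $O(1)$) bounds the integral by $O(\log(1/\eps))$, yielding $|g_A(z)-z|\leq C\eps|\log\eps|$. When $\Im z \geq 2\eps$, this is immediate: Lemma~\ref{lem:imaginary} gives $\Im w \geq \Im z - \eps \geq \eps$, and since $J\subset \R$, $d(w,J)\geq \Im w \geq \eps$.

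The harder case is $\Im z < 2\eps$ with $\Re z \notin [-1-\eps,2\eps]$, where $\Im w$ may vanish; here one needs the horizontal distance $|\Re w - r'|$ (with $r'$ the nearest endpoint of $J$) to be comparable to $d(z,A)\geq \eps$. This follows from a Koebe-type distortion argument applied to $g_A:\Hb\setminus A \to \Hb$ near the tip of the hull, using that $g_A(x) > x$ on $\R\setminus\bar A$ and that the hull has height at most $\eps$. The main technical obstacle I anticipate is precisely this distance bound in the second case, which requires careful conformal-geometry estimates at the scale of the hull; once it is in place, the rest of the argument is a short calculation.
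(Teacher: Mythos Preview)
Your Herglotz/Nevanlinna approach is a genuinely different route from the paper's.  The paper never writes down an integral representation for $g_A^{-1}$; instead it bounds $|h'(z)|$ pointwise by $2\sqrt{2}\,v(z)/\dist(z,\R\cup A)$ (a Harnack/Poisson-derivative estimate for the nonnegative harmonic function $v=\Im h$), integrates this along the vertical ray from $z$ to $\infty$, and then controls $v(x+iu)=\E_{x+iu}[\Im B_\tau]$ by elementary hitting-probability estimates in three regimes of $u$.  All cases are handled by the same formula, and the hard case (small $\Im z$, $\Re z$ outside $[-1-\eps,2\eps]$) falls out of the bound $v(x+iu)\le c\eps u/r^2$ for small $u$, obtained by estimating the chance that Brownian motion from $x+iu$ reaches $A$ before $\R$.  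Your representation isolates the mechanism for the $\eps|\log\eps|$ scaling very transparently --- density $\rho\le\eps/\pi$ times a logarithmically divergent kernel --- and the case $\Im z\ge 2\eps$ is handled more cleanly than in the paper.

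The gap is exactly where you flag it.  The claim $d(g_A(z),J)\gtrsim\eps$ for small $\Im z$ does \emph{not} follow from Koebe distortion alone: Koebe only gives $d(g_A(z),J)\asymp d(z,A\cup A^*)\,|g_A'(z)|$, which is circular without an independent lower bound on $|g_A'(z)|$, and the observation $g_A(x)>x$ on $\R\setminus\bar A$ does not by itself force $g_A(x)-a_+\gtrsim\eps$ (in the half-disk example $a_+=2\eps$).  What does work is a short harmonic-measure argument for real $x\ge 2\eps$: since $A\subset R_1$, one has
\[
g_A(x)-a_+ \;=\;\lim_{y\to\infty}\pi y\,\P^{iy}\bigl(B_{\tau_A}\in(\beta,x)\bigr)\;\ge\;\lim_{y\to\infty}\pi y\,\P^{iy}\bigl(B_{\tau_{R_1}}\in(\eps,2\eps)\bigr)\;\ge\;c_0\eps,
\]
the last inequality by a direct estimate in the fixed domain $\Hb\setminus R_1$.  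One then propagates this to $z=x+iy$ with $0<y<2\eps$ by a Gronwall-type use of Koebe: $|g_A'(\zeta)|\le 4\,d(g_A(\zeta),J)/d(\zeta,A\cup A^*)\le 4\,d(g_A(\zeta),J)/\eps$, so $|\frac{d}{dt}\log d(g_A(x+it),J)|\le 4/\eps$, whence $d(g_A(z),J)\ge e^{-8}c_0\eps$.  With this in hand your integral bound gives the result.  Note, though, that the harmonic-measure step is of the same probabilistic flavor as the paper's estimates on $v$ --- so the ``hard case'' is not bypassed, only repackaged.
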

\begin{proof}
Many ideas of this proof come from \cite[Proposition 3.50]{MR2129588}.
Let $h(z)=z-g_A(z)$ and $v(z)= \Im(h(z))$. 
Note that $v(z)$ is harmonic. By Lemma~\ref{lem:imaginary}, we have $v(z)\ge 0$. Let $r= \dist(z, \Rb \cup A)$. For all $\zeta \in B(z, r)$, we have 
\begin{align*}
v(\zeta) = \int_{\partial B(z,r)} v(w) F(\zeta,w) |d w|, \qquad
\text{where}\quad F(\zeta, w)=\frac{1}{2\pi r}\Re \left(\frac{w+\zeta-2z}{w-\zeta}\right).
\end{align*}
Letting $\zeta=x+iy$, we have
$\left| \partial_x F(\zeta, w)\right| _{\zeta=z} \le 1/(\pi r^2)$ and $\left| \partial_y F(\zeta, w)\right| _{\zeta=z} \le 1/(\pi r^2)$. Therefore 
\begin{align*}
|\partial_x v(z)| \le \int_{\partial B(z,r)} v(w) (\pi r^2)^{-1} |d w| = 2 v(z)/r,
\end{align*}
and similarly $|\partial_x v(z)| \le 2 v(z)/r$.
This implies $|h'(z)| \le 2\sqrt{2} v(z) /r$. Since $h(z)\to 0$ as $y\to \infty$, we have
\begin{align}\label{eq:boundh}
|h(z)|\le \int_{y}^\infty |h'(x+iu)| du \le 2\sqrt{2} \int_y^\infty v(x+iu) \dist(x+iu, \Rb\cup R)^{-1} du.
\end{align}
We now bound the value of $v(x+iu)$ using Lemma~\ref{lem:imaginary}. We distinguish a few cases. There exists a constant $c>0$ such that the following holds.
\begin{enumerate}
\item If $u \ge 1$, then $v(x+iu) \le c\eps /u$.
\item If $x\in (-\infty, -1-\eps]$, then let $r=-1-x\ge \eps$.  We have $v(x+iu) \le c\eps u/r^2$ for $u\in (0, r^2]$ and $v(x+iu) \le \eps $ for $u\in ( r^2,1]$.
\item If $x\in [2\eps, \infty)$, then let $r=x-\eps\ge \eps$. We have $v(x+iu) \le c\eps u/r^2$ for $u\in (0, r^2]$ and $v(x+iu) \le \eps $ for $u\in ( r^2,1]$.
\item If $x\in [-1-\eps, 2\eps]$ and $u\in [2\eps, 1]$, then  $v(x+iu) \le \eps$.
\end{enumerate}
Now, if $x\in (-\infty, -1-\eps]$, then let $r=-1-x\ge \eps$. We plug the bounds 1 and 2 into \eqref{eq:boundh}, and get that there exist $c_1, c_2>0$, such that for all $z\in \Hb$ with $\Re z=x \in  (-\infty, -1-\eps]$,
\begin{align*}
|h(z)| \le c_1 \int_0^{r^2} \eps /r^2 du + c_1 \int_{r^2}^1 \eps u^{-1} du  + c_1 \int_1^\infty \eps u^{-2} du \le c_2 \eps  |\log \eps|.
\end{align*}
If $x\in [2\eps, \infty)$, then let $r=x-\eps \ge \eps$. We plug the bounds 1 and 3 into \eqref{eq:boundh}, and get similarly for all $z\in \Hb$ with $\Re z=x \in  [2\eps, \infty)$ that $|h(z)| \le c_2 \eps  |\log \eps|.$
If $x\in [-1-\eps, 2\eps]$ and $u \ge 2\eps$, then we plug the bounds 1 and 4 into \eqref{eq:boundh}, and get for all $z$ with $\Re z=x \in [-1-\eps, 2\eps]$ and $\Im z \ge 2\eps$,
\begin{align*}
|h(z)|\le c_1 \int_{2\eps}^1 \eps/(u-\eps) du + c_1 \int_1^\infty \eps u^{-1}(u-\eps)^{-1} du \le c_2 \eps  |\log \eps|.
\end{align*}
This completes the proof.
\end{proof}

We now state a consequence of Lemma \ref{L:hull} that we will use later. Recall that the rectangles $R_1$ and $R_2$ are defined in \eqref{E:rectangles}.

\begin{corollary}\label{C:approx}
There exists $c_1>0$ such that the following holds. Let $\eps\in (0, 1/10)$ and suppose that $\gamma$ is a curve parametrized by $[0,\tau]$ with  $\gamma(0)=0$, $\gamma(\tau)\in [-1, -1+i \eps]$ and $\gamma([0,\tau]) \subset R_1$. 
Let $f$ be the unique conformal map from $\Hb\setminus \gamma([0, \tau])$ onto $\Hb$ with $f(0^+)=0$, $f(\gamma(\tau))=-1$ and $f(\infty)=\infty$. Then for all $z\in \Hb\setminus R_2$, we have
\begin{align*}
|z-f(z)|\le c_1 \eps |\log\eps|.
\end{align*}
\end{corollary}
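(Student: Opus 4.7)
The strategy is to express $f$ as a small perturbation of the hydrodynamic map $g_A\colon \Hb\setminus A\to\Hb$ associated with the hull $A := \gamma([0,\tau])$, for which Lemma~\ref{L:hull} already gives the estimate $|g_A(z)-z|\le c\eps|\log\eps|$ on $\Hb\setminus R_2$.

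Since $\Hb\setminus A$ is simply connected, $A$ is an $\Hb$-hull, so the lemma applies. Both $g_A$ and $f$ map $\Hb\setminus A$ conformally onto $\Hb$ and send $\infty$ to $\infty$, so $\phi := f \circ g_A^{-1}$ is a real Möbius automorphism of $\Hb$ fixing $\infty$, hence of the affine form $\phi(w) = \alpha w + \beta$ with $\alpha > 0$ and $\beta \in \R$. Writing $x_0 := g_A(0^+)$ and $x_1 := g_A(\gamma(\tau))$, the remaining normalizations $f(0^+) = 0$ and $f(\gamma(\tau)) = -1$ translate into $\phi(x_0) = 0$ and $\phi(x_1) = -1$, so
\[
\alpha = \frac{1}{x_0 - x_1}, \qquad \beta = -\frac{x_0}{x_0 - x_1}.
\]

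The core of the proof is the following pair of estimates:
\[
|x_0| \le C\eps|\log\eps|, \qquad |x_1 + 1| \le C\eps|\log\eps|.
\]
Granting these, one obtains $|\alpha - 1| \le C'\eps|\log\eps|$ and $|\beta| \le C'\eps|\log\eps|$. Then the identity
\[
f(z) - z = \alpha\bigl(g_A(z) - z\bigr) + (\alpha - 1)\,z + \beta,
\]
combined with Lemma~\ref{L:hull}, yields $|f(z) - z| \le c_1 \eps|\log\eps|$ for every $z$ in any fixed bounded subset of $\Hb \setminus R_2$ (which is what Corollary~\ref{C:approx} will be used for downstream).

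The one-sided estimates $x_0 \le 3\eps + c\eps|\log\eps|$ and $x_1 \ge -1 - 2\eps - c\eps|\log\eps|$ are easy: they follow from the monotonicity of $g_A$ along $\R$ together with Lemma~\ref{L:hull} applied at the real points $3\eps$ and $-1-2\eps$, both of which sit just outside $R_2$. The hard part, which I expect to be the main obstacle, is the reverse inequalities $x_0 \ge -C\eps|\log\eps|$ and $x_1 \le -1 + C\eps|\log\eps|$: the preimages $0$ and $\gamma(\tau)$ lie inside $R_2$ where Lemma~\ref{L:hull} gives no direct control, and moreover $\gamma(\tau)$ is the tip of the slit, near which $g_A$ has a square-root singularity. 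I would tackle these by applying Lemma~\ref{L:hull} at interior points of $\Hb$ placed just above $R_2$ (e.g.\ $3i\eps$ and $-1+3i\eps$) and then converting the resulting control on $g_A$ there into control of the boundary images $x_0$ and $x_1$ via conformal invariance of harmonic measure; alternatively, one can rerun the Schwarz-reflection / Cauchy-integral argument underlying the proof of Lemma~\ref{L:hull}, now applied to $\Re(g_A(z) - z)$.
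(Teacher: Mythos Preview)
Your overall plan coincides with the paper's: write $f=\phi\circ g_A$ with $\phi$ affine, reduce to showing $|g_A(0^+)|\le C\eps|\log\eps|$ and $|g_A(\gamma(\tau))+1|\le C\eps|\log\eps|$, and then combine with Lemma~\ref{L:hull}. Your ``easy'' one-sided estimates via monotonicity of $g_A$ on $\R$ are fine and match the paper's use of $g(2\eps)$ and $g(-1-\eps)$ as anchor points.

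The gap is exactly where you flag it: the ``hard'' inequalities $x_0\ge -C\eps|\log\eps|$ and $x_1\le -1+C\eps|\log\eps|$ are not proved, only sketched. The paper handles these by a direct harmonic-measure-from-infinity identity rather than by interior point estimates: one writes
\[
g(2\eps)-g(0^+)=\lim_{y\to\infty}\pi y\,\P^{iy}\bigl[B_\sigma\in[0,2\eps]\bigr],
\]
where $\sigma$ is the hitting time of $\R\cup\gamma$, and bounds this by the same quantity with $\sigma$ replaced by the hitting time of $\R$, yielding $g(2\eps)-g(0^+)\le 2\eps$. For the tip, the paper compares $g(\gamma(\tau))-g(-1-\eps)$ to the corresponding quantity for the explicit hull $[-1,\gamma(\tau)]$ (a vertical slit), whose uniformising map is $\wt g(z)=((z+1)^2+\delta^2)^{1/2}-1$, giving a bound of $2\eps$. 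These are the concrete implementations of your ``conformal invariance of harmonic measure'' idea, but your proposal stops short of carrying them out; the slit comparison for $\gamma(\tau)$ in particular is not obvious from what you wrote.

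One further point: you are right that the combination $f(z)-z=\alpha(g_A(z)-z)+(\alpha-1)z+\beta$ carries a factor $|z|$ through $(\alpha-1)z$, so the bound $|f(z)-z|\le c_1\eps|\log\eps|$ is really only uniform on bounded subsets of $\Hb\setminus R_2$. The paper's proof glosses over this, but since the corollary is only applied to $\partial R_2$ and $\partial R_3$ (both bounded), your restriction is the honest statement and causes no harm downstream.
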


\begin{proof}
Let $g$ be the unique conformal map from $\Hb \setminus \gamma([0, \tau])$ onto $\Hb$ with $\lim_{z\to\infty}[g(z)-z]=0$. 
By Lemma~\ref{L:hull}, we know that there exists a constant $c>0$, such that for all $\eps\le 1/10$, almost surely on the event $E_\eps$, for all $z\in \Hb \setminus R_2$, we have 
\begin{align}\label{eq:gzz}
|g(z) -z| \le c \eps |\log \eps|.
\end{align}
Note that $f(z)= (g(z)-g(0^+))/(g(0^+)-g(\gamma(\tau)))$. To conclude, it suffices to show that  there exists $c>0$ such that
\begin{align}\label{eq:f_suff}
|g(0^+)| \le c_2 \eps |\log \eps| \qquad \text{and} \qquad |g(\gamma(\tau)) - (-1)| \le c \eps |\log \eps|.
\end{align}
Note that 
\begin{align}\label{eq:g(0)}
|g(0^+) |\le |g(2\eps) -2\eps| + 2\eps + |g(2\eps) - g(0^+)|
\end{align}
Let $B$ be a Brownian motion and let $\sigma$ be the first time $t$ that $B_t$ hits $\Rb \cup \gamma([0,\tau])$. Let $\sigma_0$ be the first time $t$ that $B_t$ hits $\Rb$. Then
\begin{align}
\notag
g(2\eps) - g(0^+)=&\lim_{y\to \infty} \pi y\Pb^{iy} [g(B_\sigma) \in [g(0^+), g(2\eps)]]= \lim_{y\to \infty}  \pi y \Pb^{g^{-1}(iy)}[B_\sigma \in [0, 2\eps]]\\
\label{eq:2eps}
=& \lim_{y\to \infty}  \pi y \Pb^{iy}[B_\sigma \in [0, 2\eps]] \le  \lim_{y\to \infty}  \pi y \Pb^{iy}[B_{\sigma_0} \in [0, 2\eps]] = 2\eps.
\end{align}
Plugging \eqref{eq:gzz} and \eqref{eq:2eps} into \eqref{eq:g(0)}, we get $|g(0^+)| \le c_1 \eps |\log \eps|$ for some $c_1>0$.
We also have
\begin{align}\label{eq:g}
|g(\gamma(\tau)) - (-1)| \le |g(-1-\eps)-(-1-\eps)| +\eps + |g(-1-\eps)- g(\gamma(\tau))|.
\end{align}
Let $I$ be the union of $[-1-\eps, 0]$ together with the left hand-side of $\gamma([0,\tau])$. We have
\begin{align*}
g(\gamma(\tau))-g(-1-\eps) = \lim_{y\to \infty}  \pi y \Pb^{iy}[B_\sigma \in I].
\end{align*}
Let $\sigma_1$ be the first time $t$ that $B_t$ hits $\Rb \cup [-1, \gamma(\tau)] \cup \gamma([0,\tau])$. Let $I_1$ be the union of $[-1-\eps, -1]$ with the left-hand side of $[-1, \gamma(\tau)]$. Then $\Pb^{iy}[B_\sigma \in I] \le \Pb^{iy}[B_{\sigma_1} \in I_1]$. Let $\sigma_2$  be the first time $t$ that $B_t$ hits $\Rb \cup [-1, \gamma(\tau)]$. Then $ \Pb^{iy}[B_{\sigma_1} \in I_1] \le \Pb^{iy}[B_{\sigma_2} \in I_1]$, hence 
\begin{align*}
g(\gamma(\tau))-g(-1-\eps) \le \lim_{y\to \infty}  \pi y \Pb^{iy}[B_{\sigma_2} \in I_1] = \wt g(\gamma(\tau))- \wt g(-1-\eps),
\end{align*}
where $\wt g$ is the unique conformal map from $\Hb\setminus [-1, \gamma(\tau)]$ onto $\Hb$ with $\lim_{z\to\infty}[\wt g(z)-z]=0$. Let $\delta=|\gamma(\tau)| \in [0, \eps]$, then $\wt g(z)= ((z+1)^2 + \delta^2)^{1/2}-1$, and $ \wt g(\gamma(\tau))-\wt g(-1-\eps) =\sqrt{\eps^2+\delta^2} \le 2 \eps$. This implies $g(\gamma(\tau))-g(-1-\eps)\le 2\eps$.
Plugging it back into \eqref{eq:g}, adjusting $c_1$ if necessary, we get
$|g(\gamma(\tau)) - (-1)| \le c_1 \eps |\log \eps|.$
This completes the proof of \eqref{eq:f_suff} and thus of Corollary \ref{C:approx}.
\end{proof}

\section{Conditioning a loop on a portion of its outer boundary}\label{S:conditioning}

Let us start by recalling a result that we already alluded to.

\begin{proposition}[Proposition 3.6, \cite{MR3901648}]\label{prop:decomp}
There exists a probability measure $\pint$ on the space of loops $\{\Pc \in \Lc: \Pc \subset \ol \D\}$ which is invariant under any conformal map of the unit disc and such that the following holds. 
For $\nu_\SLE$-almost all $\gamma$, the following holds. Let $\varphi_\gamma : \D \to \inte(\gamma)$ be a conformal transformation. Then the push forward of $\pint$ by $\varphi_\gamma$ is distributed according to $\loopm (\cdot \vert \gamma)$ \eqref{E:decompo_loopmeasure}.
\end{proposition}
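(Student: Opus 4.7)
The approach is to combine a disintegration of $\loopm$ along the outer-boundary projection with the two conformal invariances of the Brownian loop measure: invariance under Möbius transformations of $\widehat\C$, and invariance under conformal maps between simply connected subdomains of $\C$. Since $\out_*\loopm = \nu_\SLE$ (see \eqref{E:decompo_loopmeasure}), the disintegration theorem on the Polish space $\Lc$ produces a measurable family of probability kernels $\loopm(\cdot\vert\gamma)$, each supported on loops with outer boundary $\gamma$, realizing the decomposition \eqref{E:decompo_loopmeasure}. I then pick a measurable family of uniformizing maps $\varphi_\gamma:\D\to\inte(\gamma)$ and set $\pint^\gamma:=(\varphi_\gamma^{-1})_*\loopm(\cdot\vert\gamma)$, a probability measure on loops in $\overline\D$. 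The proposition reduces to the claim that $\pint^\gamma$ is $\nu_\SLE$-a.s.\ a single $\mathrm{Aut}(\D)$-invariant measure $\pint$.

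The next step is to translate both conformal invariances of $\loopm$ into transformation rules for $\pint^\gamma$. For any Möbius transformation $\phi$ of the sphere $\widehat\C$, $\phi_*\loopm=\loopm$ and $\phi_*\nu_\SLE=\nu_\SLE$, so by uniqueness of the disintegration $\loopm(\cdot\vert\phi(\gamma)) = \phi_*\loopm(\cdot\vert\gamma)$ for $\nu_\SLE$-a.e.\ $\gamma$. In disk coordinates this reads $\pint^{\phi(\gamma)} = \tilde\phi_*\pint^\gamma$ with $\tilde\phi:=\varphi_{\phi(\gamma)}^{-1}\circ\phi\circ\varphi_\gamma\in\mathrm{Aut}(\D)$. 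For the interior invariance, I fix a reference loop $\gamma_0$, restrict $\loopm$ to loops contained in $\inte(\gamma_0)$, and push forward by $\varphi_{\gamma_0}^{-1}$. Conformal invariance of the Brownian loop measure on simply connected subdomains, combined with the same disintegration uniqueness, gives
\[
(\varphi_{\gamma_0}^{-1})_*\loopm(\cdot\vert\gamma) = \loopm(\cdot\vert\varphi_{\gamma_0}^{-1}(\gamma))\qquad\text{for $\nu_\SLE$-a.e.\ $\gamma\subset\inte(\gamma_0)$,}
\]
equivalently $\pint^\gamma = \chi_*\pint^{\varphi_{\gamma_0}^{-1}(\gamma)}$ for an explicit Möbius self-map $\chi:=\varphi_\gamma^{-1}\circ\varphi_{\gamma_0}\circ\varphi_{\varphi_{\gamma_0}^{-1}(\gamma)}\in\mathrm{Aut}(\D)$.

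Finally, the two covariance relations combine with the gauge freedom in the choice of $\varphi_\gamma$ (determined only up to post-composition by an element of $\mathrm{Aut}(\D)$) to collapse the family $\{\pint^\gamma\}_\gamma$ to a single $\mathrm{Aut}(\D)$-invariant measure. A convenient way to realize this is to fix a measurable section for the $\mathrm{Aut}(\widehat\C)$-orbits on $\Gamma$ and extend the uniformizing maps equivariantly, so that the sphere covariance yields $\tilde\phi=\mathrm{id}$ and $\pint^\gamma$ is constant along $\mathrm{Aut}(\widehat\C)$-orbits; the interior covariance then forces the resulting orbit-constants to coincide (modulo Möbius self-maps of $\D$), and a last application of the sphere covariance with $\phi\in\mathrm{Aut}(\D)\subset\mathrm{Aut}(\widehat\C)$ upgrades this identification to genuine $\mathrm{Aut}(\D)$-invariance of the common value $\pint$. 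I expect the main obstacle to be this last bookkeeping step: separating true Möbius invariance from the weaker statement of covariance under the combined action requires careful measurable-selection arguments, whereas the other inputs (existence of the disintegration, Möbius and interior conformal invariances of $\loopm$, measurability of the uniformizing maps) are classical.
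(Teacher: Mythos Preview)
The paper does not prove this proposition at all: it is quoted verbatim from \cite{MR3901648} (Proposition~3.6 there), so there is no ``paper's own proof'' to compare against. Your proposal is therefore an attempt to supply an argument the present paper simply imports.

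Your overall architecture is sound: disintegrate $\loopm$ along $\out$, pull the fibers back to $\D$ by uniformizing maps to get a family $\{\pint^\gamma\}$, and use conformal invariance of $\loopm$ (and of $\nu_\SLE$) to derive covariance rules of the form $\pint^{\psi(\eta)} = \alpha_*\pint^\eta$ with $\alpha\in\mathrm{Aut}(\D)$. This much is correct and standard.

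The gap is exactly where you flag it, but it is more serious than ``bookkeeping''. Your proposed mechanism---fix a measurable section for $\mathrm{Aut}(\widehat\C)$-orbits on $\Gamma$, make the uniformizers equivariant, and conclude constancy along orbits---does not close the argument, because $\mathrm{Aut}(\widehat\C)$ is a six-real-dimensional group acting on an infinite-dimensional space of simple loops: generic $\SLE_{8/3}$ loops are \emph{not} M\"obius-equivalent, so constancy along $\mathrm{Aut}(\widehat\C)$-orbits is a very weak statement. The interior covariance relates $\pint^\gamma$ to $\pint^{\varphi_{\gamma_0}^{-1}(\gamma)}$, but this again only links $\gamma$ to one specific other loop, not to an arbitrary one. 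What you actually need is that the pseudo-group generated by affine maps together with the maps $\gamma\mapsto\varphi_{\gamma_0}^{-1}(\gamma)$ acts \emph{ergodically} on $(\Gamma,\nu_\SLE)$, so that any invariant of this action is $\nu_\SLE$-a.s.\ constant. You neither state nor prove such an ergodicity result, and it is not obvious. Even granting constancy of the $\mathrm{Aut}(\D)$-equivalence class $[\pint^\gamma]$, you still have to upgrade this to genuine $\mathrm{Aut}(\D)$-invariance of the common value; your last sentence asserts this follows from ``sphere covariance with $\phi\in\mathrm{Aut}(\D)$'', but that relation only gives $\pint^{\phi(\gamma)}=\tilde\phi_*\pint^\gamma$ with $\tilde\phi$ depending on $\gamma$ and on the chosen uniformizers, which is again covariance rather than invariance.

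The construction in \cite{MR3901648} avoids these difficulties by building $\pint$ more concretely (via an explicit limiting/conditioning procedure tied to the Brownian bubble decomposition, in the spirit of the material the present paper recalls around \eqref{eq:bub}--\eqref{eq:bub_loop}), from which the $\mathrm{Aut}(\D)$-invariance and the identification with the disintegration are read off directly rather than deduced from abstract covariance.
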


The probability law $\pint$ was denoted by $\P^\inte$ in \cite{MR3901648}.
Informally, 
it corresponds to the law of a Brownian loop $\Pc \sim \loopm$ conditioned on the event that its outer boundary agrees with the unit circle.
Let $\varphi :\D \to \Hb$ be a conformal map. 
We will denote by $\P^\wired_{\Hb,\R}$ the push forward of $\pint$ by $\varphi$. By conformal invariance of $\pint$, $\P^\wired_{\Hb,\R}$ does not depend on our specific choice of conformal map $\varphi$.

In this section, and inspired by \cite{MR3901648}, we will explain how to explore only partially the outer boundary of a Brownian loop. We will define in this way a loop wired only on a portion of the boundary that satisfies one-sided conformal restriction with parameter $\alpha = 5/8$; see Lemma \ref{lem:wired}. We will then show that this partially wired Brownian loop  can be decomposed as the concatenation of a locally finite point process of Brownian excursions. This will be the content of Proposition \ref{prop:pp} which is the main result of this section. We will carry out this procedure through the Brownian bubble measure, introduced in \cite{MR1992830} and \cite{MR2045953}.

\paragraph{Brownian bubble measure}

The Brownian bubble measure in the upper half plane $\Hb$ rooted at $0$ is defined as
\begin{align}\label{eq:bub}
\bubm{0}_{\Hb} = \lim_{\eps\to 0} \frac{\pi}{\eps} H_\Hb(i\eps, 0) \Pb_\Hb^{i \eps,0},
\end{align}
where $ \Pb_\Hb^{i \eps,0}$ is the probability measure on Brownian excursions in $\Hb$ from $i\eps$ to $0$ and $H_\Hb(i\eps, 0) $ is the Poisson kernel in $\Hb$ from $i\eps$ to $0$. For $x,y \in \Rb$, we also denote by $\bubm{x+iy}_{\Hb +iy}$ the bubble measure in $\Hb+iy$ rooted at $x+iy$ (obtained from $\bubm{0}_{\Hb}$ by translation).
The following decomposition (see \cite[Proposition 7]{MR2045953}) relates the Brownian bubble measure to the Brownian loop measure
\begin{align}\label{eq:bub_loop}
\loopm=\frac{1}{\pi}\int_{-\infty}^{\infty} \int_{-\infty}^{\infty} \bubm{x+iy}_{\Hb +iy} dxdy.
\end{align}
It was shown in \cite[Section 7.1]{MR1992830} that if $\Pc$ is distributed according to $\bubm{0}_{\Hb} $, then $\out(\Pc)$ is distributed according to a constant times of SLE$_{8/3}$ bubble measure. More precisely, we define the SLE$_{8/3}$ bubble measure  as
\begin{align*}
\nu^{0,\mathrm{bub}}_\Hb:= \lim_{\eps\to 0} \eps^{-2} \Pb_{0, \eps}^{\mathrm{SLE}},
\end{align*}
where $ \Pb_{0, \eps}^{\mathrm{SLE}}$ is the probability measure on SLE$_{8/3}$ curves in $\Hb$ from $0$ to $\eps$. 
Note that  $\nu^{0,\mathrm{bub}}_\Hb$ and $\bubm{0}_{\Hb} $ are infinite measures, but are finite when we restrict to big loops.

\medskip

Suppose that $\Pc_1$ is a Brownian bubble sampled according to the probability measure obtained by renormalizing $\bubm{0}_{\Hb} $ restricted to loops that intersect $-1 + i\R$:
\begin{equation}\label{E:Pc1}
    \Pc_1 \sim \bubm{0}_{\Hb}(\d \Pc) \indic{\Pc \cap (-1 + i\R) \neq \varnothing} / \bubm{0}_{\Hb} ( \{ \Pc: \Pc \cap (-1 + i\R) \neq \varnothing \} ).
\end{equation}
We regard $\gamma:=\out(\Pc_1)$ as a curve from $0$ to $0$ oriented clockwise and we define
\begin{equation}
    \label{E:tau}
    \tau := \inf \{t>0: \gamma(t) \in -1+i \R \}.
\end{equation}
Let $f$ be the unique conformal map from $\Hb\setminus \gamma([0, \tau])$ onto $\Hb$ with $f(0^+)=0$, $f(\gamma(\tau))=-1$ and $f(\infty)=\infty$.
\begin{lemma}\label{lem:wired}
The law of $f(\Pc_1)$ is invariant under all conformal maps from $\Hb$ onto itself that fixes $-1$ and $0$. Moreover, it satisfies one-sided conformal restriction with parameter $\alpha = 5/8$
which means that, for all $A \subset \ol\Hb$ s.t. $\Hb \setminus A$ is simply connected and s.t. the distance from $A$ to $[0,1]$ is positive,
\[
\P(f(\Pc_1) \cap A = \varnothing) = \phi_A'(0)^\alpha \phi_A'(-1)^\alpha,
\]
where $\phi_A : \Hb \setminus A \to \Hb$ is any conformal map fixing $-1$ and $0$.
\end{lemma}

We will denote by $\P_{\Hb,[-1,0]}^\wired$ the law of $f(\Pc_1)$.
We define $\Pb^{\mathrm{wired}}_{\Hb, I}$ for any interval $I \neq \R$ as the image of $\Pb^{\mathrm{wired}}_{\Hb, [-1,0]}$ under a conformal map from $\Hb$ onto itself that maps $[-1,0]$ to $\ol I$. 

\begin{proof}
By definition of the SLE$_{8/3}$ bubble measure, the image under $f$ of the part of $\gamma$ after $\tau$ is distributed as a chordal SLE$_{8/3}$ curve $\wt \gamma$ in $\Hb$ from $-1$ to $0$. The law of $\wt \gamma$ is  invariant under all conformal maps from $\Hb$ onto itself that fixes $-1,0$, and satisfies one-sided conformal restriction with parameter $5/8$ (see \cite[Section 6]{MR1992830}). By Proposition~\ref{prop:decomp} and \eqref{eq:bub_loop}, we know that $f(\Pc_1)$ has the same law as the following random object: We first sample a chordal SLE$_{8/3}$ curve $\wt \gamma$ in $\Hb$ from $-1$ to $0$. In the bounded connected component $C(\wt\gamma)$ of $\Hb\setminus \wt\gamma$, we choose a random point $z$, say uniformly in $C(\wt\gamma)$. Let $\varphi$ be the conformal map from $\Db$ onto $C(\wt\gamma)$ with $\varphi(0)=z$ and $\varphi(1)=0$. Let $K$ be an independent set with law $\pint$. Then $\varphi(K)$ has the same law as $f(\Pc_1)$. Due to the conformal invariance of $\pint$ and $\wt \gamma$, we can deduce that the law of $\varphi(K)$ is invariant under all conformal maps from $\Hb$ onto itself that fixes $-1,0$. 
\end{proof}

We can now state and prove the main result of this section.

\begin{proposition}\label{prop:pp}
Let $\Pc$ be distributed as  $\Pb^{\mathrm{wired}}_{\Hb, [0,1]}$. Then $\Pc$ is the concatenation of a locally finite point process of Brownian excursions in $\Hb$ with endpoints in $[0,1]$.
\end{proposition}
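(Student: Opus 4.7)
The plan is to establish the decomposition first for $\Pb^{\mathrm{wired}}_{\Hb, [-1,0]}$ (the case for $[0,1]$ then follows by pushing forward through a M\"obius self-map of $\Hb$ sending $[-1,0]$ to $[0,1]$, using conformal invariance of Brownian excursions). Fix $\Pc_1$ as in \eqref{E:Pc1} and recall that $f : \Hb \setminus \gamma([0,\tau]) \to \Hb$ is the conformal map sending $\gamma([0,\tau])$ onto $[-1,0]$. The main object of interest is the closed random set $T := \Pc_1^{-1}(\gamma([0,\tau])) \subset \mathbb{S}^1$, whose complement in $\mathbb{S}^1$ is a countable disjoint union of open arcs $(a_i, b_i)_{i \in I}$.

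On each arc $(a_i, b_i)$, the restriction $\Pc_1|_{(a_i, b_i)}$ is a continuous path in $\Hb \setminus \gamma([0,\tau])$ whose two endpoints lie on $\gamma([0,\tau])$; since $\gamma([0,\tau])$ is part of the outer boundary of the trace of $\Pc_1$, the sub-path cannot cross it. By the strong Markov property applied at the entry and exit times of $\gamma([0,\tau])$, each such sub-path is, conditionally on its endpoints and on $\gamma([0,\tau])$, a Brownian excursion in $\Hb \setminus \gamma([0,\tau])$ from $\Pc_1(a_i)$ to $\Pc_1(b_i)$. Applying $f$ and invoking conformal invariance of Brownian motion (up to time change), the image of each sub-path is a Brownian excursion in $\Hb$ with both endpoints in $[-1,0]$. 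Concatenating these excursions in the cyclic order inherited from $\mathbb{S}^1$ recovers $f(\Pc_1)$, with the image of $T$ being exactly the set of times at which $f(\Pc_1)$ visits $[-1,0]$.

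It then remains to verify local finiteness of the family of excursions, namely that for every $\delta > 0$ only finitely many excursions of $f(\Pc_1)$ away from $[-1,0]$ have diameter at least $\delta$. This follows from the uniform continuity of $f(\Pc_1)$ on $\mathbb{S}^1$: an infinite sub-family of excursions with diameters $\geq \delta$ would yield accumulation points of their time intervals in $T$, and continuity at such a point would force the relevant diameters to vanish, a contradiction.

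The main obstacle is the rigorous identification of the sub-paths as Brownian excursions in the irregular domain $\Hb \setminus \gamma([0,\tau])$. The curve $\gamma([0,\tau])$ is a random SLE$_{8/3}$-type arc and $T$ is typically Cantor-like, so the strong Markov argument requires care: one possible route is to approximate $\gamma([0,\tau])$ by smoother curves, use continuity of the Brownian excursion measure in the underlying domain, and pass to the limit; another is to decompose at a countable dense family of hitting times of $\gamma([0,\tau])$ and take closures. Once this identification is in place, the conformal invariance of Brownian motion transfers the decomposition through $f$, and the compactness argument above gives local finiteness.
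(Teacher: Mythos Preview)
Your argument contains a genuine gap, and the difficulty you flag at the end is misdiagnosed. The problem with invoking the strong Markov property ``at the entry and exit times of $\gamma([0,\tau])$'' is not that $\gamma([0,\tau])$ is an irregular fractal; it is that $\gamma([0,\tau])$ is a \emph{function of the path $\Pc_1$ itself}. Recall that $\gamma = \out(\Pc_1)$, so the set you are hitting is determined by the entire trajectory, including its future. Consequently, the first hitting time of $\gamma([0,\tau])$ is not a stopping time for the natural filtration of $\Pc_1$, and the strong Markov property does not apply. Neither of your suggested fixes (approximating $\gamma([0,\tau])$ by smooth curves, or using a dense family of hitting times) addresses this: any approximation that depends on $\gamma$ still depends on $\Pc_1$. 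If you try instead to condition on $\gamma([0,\tau])$ first and then describe the conditional law of $\Pc_1$, you are led (via Lemma~\ref{lem:wired}) precisely back to $\Pb^{\mathrm{wired}}_{\Hb,[-1,0]}$, whose excursion structure is what you are trying to establish --- so the argument becomes circular.

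The paper avoids this trap by never applying the Markov property on the random curve. Instead, one decomposes $\Pc_1$ along the boundary of a \emph{deterministic} rectangle $R_2$ of height $O(\eps)$ containing $[-1,0]$: the excursions in $\Hb\setminus R_2$ that reach a larger rectangle $R_3$ are then genuinely conditionally independent Brownian excursions given their endpoints, by the ordinary strong Markov property. One then conditions on the event $E_\eps=\{\gamma([0,\tau])\subset R_1\}$, which depends only on $\gamma([0,\tau])$ and is therefore independent of $f(\Pc_1)$ by Lemma~\ref{lem:wired}; in particular, the law of $f(\Pc_1)$ is unaffected by this conditioning. The quantitative estimate of Corollary~\ref{C:approx} shows that on $E_\eps$ the map $f$ is uniformly $O(\eps|\log\eps|)$-close to the identity outside $R_2$, so as $\eps\to 0$ the images under $f$ of the rectangle-excursions converge to the excursions of $\Pc\sim\Pb^{\mathrm{wired}}_{\Hb,[-1,0]}$ away from $[-1,0]$ that reach $\partial R_3$. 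This transfers the Brownian-excursion law from the deterministic rectangle to the interval. Your local finiteness argument is fine and matches the paper's.
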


\begin{remark}
By ``locally finite'', we mean that for any $\delta>0$, there are a.s.\ finitely many excursions with diameter $\ge \delta$. By ``point process'', we mean that given the pairs of endpoints of the excursions, the excursions are distributed as independent Brownian excursions in $\Hb$ connecting those endpoints.
By ``concatenation'', we mean that these excursions come with a natural cyclic ordering allowing us to glue them together to recover the unrooted loop modulo time parametrisation. 
Note however that this cycling ordering is \textit{a priori} not measurable w.r.t. the point process of excursions.
This proposition is analogous to \cite[Lemma 9]{qian2018} where a similar result is proved for a Brownian loop soup wired on the entire boundary.
\end{remark}
\begin{proof}[Proof of Proposition~\ref{prop:pp}]
\begin{figure}
\centering
\includegraphics[trim={0 0 0 1.7cm},clip, width=\textwidth]{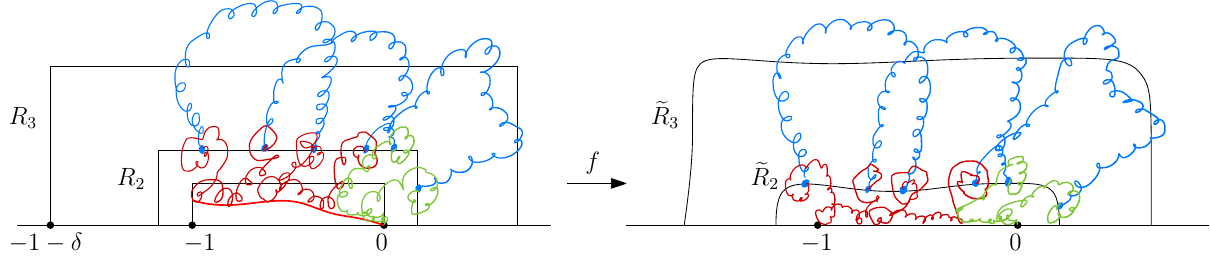}
\caption{On the left, we depict the decomposition of a Brownian bubble $\Pc$, where the event $E_\eps$ occurs. The excursions in (1) (2) (3) are respectively drawn in blue, red and green. The picture on the left is mapped by $f$ to the picture on the right.}
\label{fig:decomposition}
\end{figure}
The ``locally finite'' property immediately follows from the fact that that $\Pc$ is a continuous loop.
To prove the ``point process'' property, we first make the following decomposition for a loop $\Pc_1$ distributed according to \eqref{E:Pc1}. See Figure~\ref{fig:decomposition} for an illustration. Fix $\delta>2\eps>0$. As in \eqref{E:rectangles}, let $R_2$ be the rectangle with corners $-1-\eps$ and $2\eps+ 2\eps i$ and let $R_3$ be the rectangle with corners $-1-\delta$ and $\delta+ i\delta$.
Then $\Pc_1$ can be decomposed as the concatenation of 
\begin{enumerate}[(1)]
\item a finite number of excursions in $\Hb\setminus R_2$ which have both endpoints on $\partial R_2$, and intersect $\partial R_3$;
\item excursions in $R_3$ that connect the endpoints of the excursions in (1);
\item an excursion in $R_3$ from $0$ to an endpoint of an excursion in (1), and an excursion in $R_3$ from an endpoint of an excursion in (1) back to $0$.
\end{enumerate}
The strong Markov property of Brownian path measures ensures that given the endpoints, the excursions in (1) are distributed as independent Brownian excursions in $\Hb\setminus R_2$ that are conditioned to intersect $\partial R_3$.

Lemma~\ref{lem:wired} and its proof allow us to decompose $\Pc_1$ into three independent random objects: the curve $\gamma([0,\tau])$ (see \eqref{E:tau}), the curve $\wt \gamma$ distributed as an SLE$_{8/3}$ from $-1$ to $0$, and $\Kc$ distributed as $\pint$. As before, we will denote by $f$ the unique conformal map from $\Hb\setminus \gamma([0, \tau])$ onto $\Hb$ with $f(0^+)=0$, $f(\gamma(\tau))=-1$ and $f(\infty)=\infty$.
Consider the event
\begin{equation}
    \label{E:event_Eeps}
    E_\eps = \{ \gamma([0,\tau]) \subset R_1 \}.
\end{equation}
This event is determined by (2) and (3), and is independent from the excursions in (1) (conditionally on their endpoints which are determined by (2) and (3)).

The event $E_\eps$ only depends on $\gamma([0,\tau])$, not on $\wt \gamma$ nor on $\Kc$.
Because $f(\Pc_1)$ is independent from $\gamma([0,\tau])$, the law of $f(\Pc_1)$ conditioned on $E_\eps$ is the same as its unconditioned law, which is  $\Pb^{\mathrm{wired}}_{\Hb, [-1,0]}$.
Now, let $\wt R_2:=f(R_2)$ and $\wt R_3=f(R_3)$ which are independent of $f(\Pc_1)$ (they only depend on $\gamma([0,\tau])$).
We have therefore proved the following statement:  Suppose $\Pc$ is sampled w.r.t.\ the law $\Pb^{\mathrm{wired}}_{\Hb, [-1,0]}$. Suppose $\wt R_2$ and $\wt R_3$ are sampled according to $f(R_2)$ and $f(R_3)$ conditioned on $E_\eps$, independently from $\Pc$.
Then $\Pc$ can be decomposed into three parts:
\begin{enumerate}[(1')]
\item a finite number of excursions in $\Hb\setminus \wt R_2$ which have both endpoints on $\partial \wt R_2$, and intersect $\partial \wt R_3$;
\item excursions in $\wt R_3$ that connect the endpoints of the excursions in the previous bullet point;
\item an excursion in $\wt R_3$ from $0$ to an endpoint of an excursion in (1), and an excursion in $\wt R_3$ from an endpoint of an excursion in (1) back to $0$.
\end{enumerate}
Moreover, let $\Ec_1$ be the set of excursions in (1'). Conditionally given the endpoints of the excursions, $\Ec_1$ is distributed as a set of independent Brownian excursions in $\Hb\setminus \wt R_2$ connecting their endpoints on $\partial \wt R_2$ and conditioned to intersect $\partial \wt R_3$.

Let $\Ec$ be the collection of excursions away from $[-1,0]$ induced from $\Pc$. 
Let $\Ec(R_3)$ be the collection of excursions in $\Ec$ that intersect $\partial R_3$.
 Note that both $\Ec_1$ and $\Ec(R_3)$ are finite sets, due to the ``local finiteness''. 
  Corollary~\ref{C:approx} ensures that $\partial \wt R_2$ converges a.s.\ to the segment $[-1,0]$ and $\partial \wt R_3$ converges a.s.\ to $\partial R_3$, both w.r.t.\ the Hausdorff distance, as $\eps\to0$. Therefore,
$\Ec_1$ converges to $\Ec(R_3)$  as $\eps\to 0$, in the following sense: For $\eps$ small enough, there is a bijection between $\Ec_1$ and $\Ec(R_3)$. Moreover, each excursion in  $\Ec_1$ is a subset of the corresponding excursion in $\Ec(R_3)$ and it converges to a corresponding excursion in $\Ec(R_3)$ w.r.t.\ the Hausdorff distance. This implies that given the endpoints of the excursions, $\Ec(R_3)$ is distributed as a  set of independent Brownian excursions in $\Hb$ connecting their endpoints on $[-1,0]$ and conditioned to intersect $\partial R_3$. Since this is true for any $\delta>0$, it implies the proposition.
 \end{proof}

\begin{notation}\label{N:exc}
A loop $\Pc$ sampled according to $\pint$ is the concatenation of countably many Brownian excursions in $\D$ with endpoints in $\partial \D$. Let us denote by $\Ec_{\partial \D}$ such a collection of excursions. For each excursion $e \in \Ec_{\partial \D}$, let $a(e)$ and $b(e)$ be its end points. For symmetry purposes, $a(e)$ is randomly chosen to be one of the two endpoints with equal probability 1/2 and $b(e)$ is the other endpoint.
Similarly, if $\Pc \sim \P^\wired_{\Hb,\R}$ (resp. $\Pc \sim \P^\wired_{\Hb,I}$ for some interval $I \subset \R$), then $\Pc$ is the concatenation of countably many excursions in $\Hb$ that we denote by $\Ec_\R$ (resp. $\Ec_I$). 
\end{notation}

\medskip

\paragraph{Consequences of Lemma \ref{lem:wired} and Proposition \ref{prop:pp}.}
We now list a few important consequences of the Lemma \ref{lem:wired} and Proposition \ref{prop:pp} that were derived in \cite{JLQ23b}.

\begin{corollary}\label{C:JLQ}
Let $\alpha = 5/8$ be the conformal restriction parameter.
    For all $r>0$,
    \begin{equation}
        \label{E:C_JLQ1}
    \E \Big[ \sum_{e \in \Ec_{\R^-}} \frac{(a(e)-b(e))^2 r^2}{(r-a(e))^2(r-b(e))^2} \Big]
    = \alpha,
    \end{equation}
    where $\Ec_{\R^-}$ is the collection of excursions in $\Hb$ with endpoints in $\R^-$ defined in Notation \ref{N:exc}.
    For all $0<r_1<r_2$,
    \begin{align}
    \notag
        & 2 \E \Big[ \sum_{e \in \Ec_{\R^-}} \frac{(a(e)-b(e))^2}{(r_1-a(e))^2(r_2-b(e))^2(r_1-r_2)^2} \Big]
        + \E \Big[ \sum_{e_1 \neq e_2 \in \Ec_{\R^-}} \prod_{j=1,2} \frac{(a(e_j)-b(e_j))^2}{(r_j-a(e_j))^2(r_j-b(e_j))^2} \Big] \\
        & = \frac{\alpha}{r_1^2(r_1-r_2)^2} + \frac{\alpha}{r_2^2(r_1-r_2)^2} + \frac{\alpha(\alpha-1)}{r_1^2 r_2^2}.\label{E:C_JLQ2}
    \end{align}
\end{corollary}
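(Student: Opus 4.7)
The plan is to derive both identities by combining the one-sided conformal restriction property of $\Pb^\wired_{\Hb, \R^-}$ (Lemma~\ref{lem:wired}), which holds with parameter $\alpha = 5/8$, with the excursion decomposition of Proposition~\ref{prop:pp}. The strategy is to Taylor-expand the restriction probability under small boundary perturbations of $\Hb$ and match the resulting orders against expectations of sums over $\Ec_{\R^-}$ via inclusion--exclusion.

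For \eqref{E:C_JLQ1}, I would fix $r > 0$ and consider the small semi-disc $A_\eps^{(r)} := \{z \in \Hb : |z-r| \le \eps\}$, which is disjoint from $\R^-$ for $\eps < r$. Conformal restriction gives $\Pb^\wired_{\Hb, \R^-}[\Pc \cap A_\eps^{(r)} = \emptyset] = F_\eps^\alpha$ for an explicit $F_\eps$ determined by the normalized conformal map $\phi_{A_\eps^{(r)}} : \Hb \setminus A_\eps^{(r)} \to \Hb$; a standard half-plane-capacity expansion then gives $F_\eps^\alpha = 1 - c_0 \alpha \eps^2 / r^2 + O(\eps^3)$ for an explicit universal constant $c_0$. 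On the other hand, inclusion--exclusion, local finiteness (Proposition~\ref{prop:pp}) and the explicit hitting probability $\Pb[e_{a,b} \cap A_\eps^{(r)} \neq \emptyset] = c_0 \eps^2 (a-b)^2/((r-a)^2(r-b)^2) + O(\eps^3)$ for a Brownian excursion from $a$ to $b$ in $\Hb$ give $\Pb^\wired_{\Hb, \R^-}[\Pc \cap A_\eps^{(r)} = \emptyset] = 1 - c_0 \eps^2 \E\bigl[\sum_{e \in \Ec_{\R^-}} (a(e)-b(e))^2/((r-a(e))^2(r-b(e))^2)\bigr] + o(\eps^2)$. Matching the $O(\eps^2)$ coefficients and multiplying by $r^2$ yields \eqref{E:C_JLQ1}.

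For \eqref{E:C_JLQ2}, I would apply the same strategy to the two-hull perturbation $A_{\eps_1}^{(r_1)} \cup A_{\eps_2}^{(r_2)}$ at $0 < r_1 < r_2$, now expanding to order $\eps_1^2 \eps_2^2$. On the restriction side, the expansion $(1+x)^\alpha = 1 + \alpha x + \tfrac12 \alpha(\alpha-1) x^2 + \dots$ produces separately an $\alpha$-contribution (from the mixed partial derivative of the two-hull conformal map, which carries the cross-factor $(r_1-r_2)^{-2}$ and generates the two $\alpha/(r_j^2(r_1-r_2)^2)$-terms) and an $\alpha(\alpha-1)$-contribution (from the square of the first-order expansion, generating the $\alpha(\alpha-1)/(r_1^2 r_2^2)$-term). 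Inclusion--exclusion on the excursion side likewise produces two contributions: single excursions hitting both semi-discs (the $Y$-sum, with the prefactor $2$ coming from the two orderings in which one excursion can visit the two hulls) and pairs of distinct excursions each hitting one semi-disc (the double sum). Matching all coefficients yields \eqref{E:C_JLQ2}.

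The main obstacle is carrying out the second-order expansion cleanly: it requires uniform $O(\eps_1^2 \eps_2^2)$ control on the joint hitting probability of two distant small semi-discs by a single Brownian excursion, together with an accurate Loewner-type expansion of the two-hull conformal map $\phi_{A_{\eps_1}^{(r_1)} \cup A_{\eps_2}^{(r_2)}}$. The $\alpha(\alpha-1)$-term is especially delicate since it encodes the non-Poissonian, negatively correlated structure of $\Ec_{\R^-}$ (as emphasized in the introduction) and appears only after the single-excursion and two-excursion contributions on the excursion side are cleanly disentangled. These expansions are carried out in detail in our earlier paper \cite{JLQ23b}, and the present corollary then follows by specialising those general identities to $\alpha = 5/8$.
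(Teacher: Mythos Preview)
Your proposal is correct and follows essentially the same approach as the paper: both invoke Lemma~\ref{lem:wired} and Proposition~\ref{prop:pp} to establish that $\Ec_{\R^-}$ is a locally finite point process of Brownian excursions whose filling satisfies one-sided conformal restriction with exponent $\alpha=5/8$, and then defer the actual identities to \cite{JLQ23b}. The paper's proof is a bare citation (to \cite[Remark~6.6]{JLQ23b}); you additionally sketch the mechanism behind that reference---perturbing $\Hb$ by small semi-discs, expanding the restriction probability in powers of $\eps$, and matching against an inclusion--exclusion expansion on the excursion side---which is indeed the method used there.
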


\begin{proof}
    By Lemma \ref{lem:wired} and Proposition \ref{prop:pp}, $\Ec_{\R^-}$ is a locally finite point process of Brownian excursions and its filling satisfies one-sided conformal restriction with exponent $\alpha$. As shown in \cite{JLQ23b} (see in particular \cite[Remark 6.6]{JLQ23b}), these two properties yield the identities \eqref{E:C_JLQ1} and \eqref{E:C_JLQ2}.
\end{proof}

\begin{corollary}\label{C:interval_excursion}
    There exists $C>0$ such that the following holds. Let $I_1, I_2 \subset \R$ be any intervals with respective lengths $r_1$ and $r_2$. Then,
    \begin{equation}
        \label{E:C_interval_excursion1}
    \E \Big[ \sum_{\substack{e \in \Ec_\R\\a(e) \in I_1, b(e) \in I_2}} (a(e) - b(e))^2 \Big] \leq C r_1 r_2.
    \end{equation}
    Moreover, if in addition $I_1 \subset I_2$, then 
    \begin{equation}
        \label{E:C_interval_excursion2}
    \E \Big[ \Big( \sum_{\substack{e \in \Ec_\R\\a(e) \in I_1, b(e) \in I_2}} (a(e) - b(e))^2 \Big)^2 \Big] \leq C r_1 r_2^3.
    \end{equation}
\end{corollary}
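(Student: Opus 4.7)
The plan is to exploit Möbius invariance of $\Pb^{\mathrm{wired}}_{\Hb,\R}$ together with the conformal restriction identities of Corollary~\ref{C:JLQ}. Since $\Pb^{\mathrm{wired}}_{\Hb,\R}$ is the pushforward of the conformally invariant $\pint$ under a conformal map $\varphi:\D\to\Hb$, it is invariant under the full Möbius group $\mathrm{PSL}_2(\R)$ of $\Hb$. Hence the one-point intensity measure of endpoint pairs
\[
\nu(dx\,dy) := \E\Big[\sum_{e\in\Ec_\R}\delta_{(a(e),b(e))}(dx\,dy)\Big]
\]
is Möbius invariant on $\R^2$, and translation- and scale-invariance already force its density on $\{(x,y):x\ne y\}$ to be of the form $C(x-y)^{-2}$ for some $C\in[0,\infty]$. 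For finiteness, one has $C=\E[\sum_{e\in\Ec_\R}(a-b)^2 H_\Hb(z_0,a)H_\Hb(z_0,b)]$ for any $z_0\in\Hb$. Pulling this back to $\D$ through $\varphi$ chosen with $\varphi(0)=z_0$, and using the conformal covariance of the Poisson kernel \eqref{E:Poisson_conformal} together with $H_\D(0,\cdot)\equiv 1/(2\pi)$ on $\partial\D$, identifies $C$ (up to an explicit multiplicative constant) with $\E_{\pint}[\sum_{e\in\Ec_{\partial\D}}|a(e)-b(e)|^2]$, which is finite since the $\pint$-loop is bounded in $\overline{\D}$ and its boundary-excursion chord-length distribution decays like $\ell^{-2}\,d\ell$, making the sum of squared chord lengths integrable over the bounded range $\ell\in(0,\operatorname{diam}\D]$.

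With $\nu = C(x-y)^{-2}\,dx\,dy$ on $\{x\ne y\}$ and $C<\infty$, the first bound is immediate:
\[
\E\Big[\sum_{\substack{e\in\Ec_\R\\ a\in I_1,\,b\in I_2}}(a-b)^2\Big]
= \int_{I_1\times I_2}(x-y)^2\,d\nu = C\,r_1 r_2.
\]
For \eqref{E:C_interval_excursion2}, I expand $(\sum g)^2 = \sum_e g(e)^2 + \sum_{e_1\ne e_2}g(e_1)g(e_2)$ with $g(e)=(a(e)-b(e))^2\mathbf{1}_{a(e)\in I_1,\,b(e)\in I_2}$. The diagonal term equals $\int_{I_1\times I_2}(x-y)^4\,d\nu = C\int_{I_1\times I_2}(x-y)^2\,dx\,dy$, and the hypothesis $I_1\subset I_2$ forces $|x-y|\le r_2$ on $I_1\times I_2$, so this piece is bounded by $C r_2^2\cdot r_1 r_2 = C r_1 r_2^3$. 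For the off-diagonal term I use the two-point intensity $\nu_2$ of ordered pairs of distinct endpoint pairs: by the same Möbius-invariance argument, $\nu_2$ is determined on the generic stratum of $\R^4$ up to a single function of the cross-ratio of the four endpoints, and the two-point restriction identity~\eqref{E:C_JLQ2} (imported to the fully wired setting via the conformal comparison used in the previous paragraph) pins this function down tightly enough that a direct integration on $(I_1\times I_2)^2$, again exploiting $I_1\subset I_2$, yields the required $O(r_1 r_2^3)$ bound.

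The main obstacle is this off-diagonal second-moment estimate: since $\Ec_\R$ is not Poisson and its excursions are genuinely correlated, a purely one-point analysis gives only $O(r_2^4)$, which is weaker than the claimed bound by the factor $r_2/r_1\ge 1$ (recall that $I_1\subset I_2$ forces $r_1\le r_2$). Recovering the sharper $r_1 r_2^3$ scaling requires the exact two-point restriction identity \eqref{E:C_JLQ2}, combined with the $\mathrm{PSL}_2(\R)$-classification of invariant two-point kernels on $\R^4$ and a careful integration extracting the improvement from $I_1\subset I_2$.
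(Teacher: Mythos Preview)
Your one-point argument is a genuinely different and cleaner route than the paper's. The paper deduces \eqref{E:C_interval_excursion1} from the partially-wired identity \eqref{E:C_JLQ1} for $\Ec_{\R^-}$ via an external lemma in \cite{JLQ23b}; you instead use the full $\mathrm{PSL}_2(\R)$-invariance of $\Ec_\R$ directly, which immediately forces the one-point intensity to be $C(x-y)^{-2}\,dx\,dy$ and makes the bound an integral identity. This is more transparent. Your finiteness argument for $C$, however, is circular: saying the chord-length distribution in $\D$ decays like $\ell^{-2}$ is equivalent to the statement you are proving, and mere boundedness of the loop does not control an infinite sum. A clean fix is to observe that $C=\pi\,\E\big[\sum_{e\in\Ec_{\partial\D}}H_\D(0,a)H_\D(0,b)/H_\D(a,b)\big]=\pi^2\,\Eint[L_0]$ by Lemma~\ref{L:1point}, which is finite by the computation of $\lambda_0$ (these later results do not rely on the present corollary).

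The diagonal piece of \eqref{E:C_interval_excursion2} is handled correctly.

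The off-diagonal piece has a real gap. You write that \eqref{E:C_JLQ2} can be ``imported to the fully wired setting via the conformal comparison used in the previous paragraph'', but that comparison only relates the two \emph{fully} wired processes $\Ec_{\partial\D}$ and $\Ec_\R$. The identity \eqref{E:C_JLQ2} is for the \emph{partially} wired $\Ec_{\R^-}$, whose filling satisfies one-sided conformal restriction; $\Ec_\R$ has no such property (its filling is all of $\overline{\Hb}$), so its two-point cross-ratio function is not the one that \eqref{E:C_JLQ2} computes, and your argument gives no reason for it to be bounded. Without that control, your off-diagonal integral over $(I_1\times I_2)^2$ yields nothing better than the trivial $O(r_2^4)$ you yourself flag. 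The paper does not carry out this step either, but it reduces by scaling to $I_2=[0,1]$, $|I_1'|=r_1/r_2$, and then invokes a specific lemma in \cite{JLQ23b} where the second-moment estimate is established for point processes with conformal restriction and the passage to the present setting is worked out. Your proposal identifies the right ingredients but stops at the point you yourself call ``the main obstacle''.
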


\begin{proof}
\eqref{E:C_interval_excursion1} is the analogue of \cite[Lemma 6.20]{JLQ23b} and follows from \eqref{E:C_JLQ1}. See \cite[Section 6.4]{JLQ23b} for details.
Moving to the proof of \eqref{E:C_interval_excursion2}, we first notice that, by scaling and translation, the left hand side of \eqref{E:C_interval_excursion2} is equal to
\[
r_2^4 \E \Big[ \Big( \sum_{\substack{e \in \Ec_\R\\a(e) \in I_1', b(e) \in [0,1]}} (a(e) - b(e))^2 \Big)^2 \Big]
\]
where $I_1' \subset [0,1]$ is an interval of length $r_1/r_2$. By the analogue of \cite[Lemma 6.19]{JLQ23b} which is derived from \eqref{E:C_JLQ2}, the above expectation is at most $C r_1/r_2$ which concludes the proof.
\end{proof}

\begin{corollary}\label{C:upper_bound_2point}
There exists $C>0$, such that for all $x, y \in \Hb$,
\[
\limsup_{r \to 0} |\log r|^2 \Prob{ \exists e, e' \in \Ec_\R, e \cap B(x,r) \neq \varnothing, e' \cap B(y,r) \neq \varnothing } \leq C \max(1, G_\Hb(x,y)).
\]    
\end{corollary}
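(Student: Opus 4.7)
The approach is to bound the probability via a second moment. Setting $N_r(z) := |\{e \in \Ec_\R : e \cap B(z,r) \neq \varnothing\}|$, one has
\[
\Prob{\exists e, e' \in \Ec_\R, e \cap B(x,r) \neq \varnothing, e' \cap B(y,r) \neq \varnothing} = \Prob{N_r(x) \geq 1, N_r(y) \geq 1} \leq \Expect{N_r(x) N_r(y)},
\]
so it suffices to show $\Expect{N_r(x) N_r(y)} \leq C \max(1, G_\Hb(x,y)) |\log r|^{-2}$ for $r$ small. The plan is to split the second moment into a \emph{diagonal} contribution (a single excursion hitting both balls) and an \emph{off-diagonal} contribution (two distinct excursions each hitting one ball).

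For the off-diagonal part, Proposition~\ref{prop:pp} shows that, conditionally on the endpoints, the excursions are independent Brownian excursions. A standard computation (via the $h$-transform representation) shows that for $a,b \in \R$ with $a \neq b$, the probability that a Brownian excursion in $\Hb$ from $a$ to $b$ visits $B(x,r)$ is, for $r$ small, of order $H_\Hb(x,a) H_\Hb(x,b) / (H_\Hb(a,b) |\log r|)$. Conditioning on the endpoints, multiplying over the two independent excursions, and summing against the intensity turns the endpoint factors into exactly the quantities controlled by \eqref{E:C_JLQ1}--\eqref{E:C_JLQ2} (after a conformal reduction to fix a convenient position), producing a contribution of order $|\log r|^{-2}$ uniformly in $x,y$.

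For the diagonal part, we estimate, for each excursion $e$ from $a$ to $b$, the probability that $e$ visits both $B(x,r)$ and $B(y,r)$. Decomposing on which ball is hit first and applying the strong Markov property of the underlying $h$-transformed path, this probability is dominated by the two-ball hitting probability for unconditioned planar Brownian motion in $\Hb$, which is of order $G_\Hb(x,y) |\log r|^{-2}$, multiplied by an endpoint factor of the form $H_\Hb(x,a) H_\Hb(y,b) / H_\Hb(a,b)$ (plus the symmetric term). Summing against the excursion intensity with \eqref{E:C_JLQ1} applied to the endpoint factors yields the bound $C\, G_\Hb(x,y) |\log r|^{-2}$.

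The main technical obstacle is ensuring that both the single-excursion hit-probability estimates and their conditional-product counterparts are uniform in the endpoints $a,b$, including in the degenerate regimes ($a \to b$, or $a,b$ escaping to infinity), and reconciling the resulting sums with the specific moment formulas of Corollary~\ref{C:JLQ}. As indicated in the excerpt, this analysis has already been carried out in \cite{JLQ23b} for general locally finite point processes of Brownian excursions satisfying one-sided conformal restriction; thanks to Lemma~\ref{lem:wired} and Proposition~\ref{prop:pp}, our $\Ec_\R$ fits into this framework with restriction exponent $\alpha = 5/8$, and the corollary follows by direct application.
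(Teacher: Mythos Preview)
Your proposal is correct and follows essentially the same route as the paper: both ultimately defer to the analysis in \cite{JLQ23b} after noting (via Lemma~\ref{lem:wired} and Proposition~\ref{prop:pp}) that $\Ec_\R$ is a locally finite point process of Brownian excursions satisfying one-sided conformal restriction. One small refinement worth noting: the paper singles out Corollary~\ref{C:interval_excursion} (the dyadic interval moment bounds) rather than Corollary~\ref{C:JLQ} as the operative input; the exact identities \eqref{E:C_JLQ1}--\eqref{E:C_JLQ2} are not applied directly to the Poisson-kernel sums you wrote down, but are first converted into the interval estimates \eqref{E:C_interval_excursion1}--\eqref{E:C_interval_excursion2}, which are then combined with a dyadic decomposition of the ranges of $a(e),b(e)$ and Cauchy--Schwarz to control the diagonal and off-diagonal sums.
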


\begin{proof}
This result is the analogue of \cite[Proposition 6.4]{JLQ23b} and follows from Corollary \ref{C:interval_excursion}. See \cite[Section 6.5]{JLQ23b} for details.
\end{proof}

\section{Proof of Theorems \ref{T:intro} and \ref{T:main}}\label{S:proof_main}

The goal of this section is to prove our main result, Theorem \ref{T:intro} and its analogue Theorem \ref{T:main} for a Brownian loop.
We start in Section \ref{SS:reduction} by showing that Theorem \ref{T:main} implies Theorem \ref{T:intro}. We will further reduce the problem by showing that Theorem \ref{T:main} follows from a third version of this result, Theorem \ref{T:level_line} below.

\subsection{Reduction}\label{SS:reduction}

We first state a version of Theorem \ref{T:main} for $\Pc \sim \pint$.
We will need to consider the following assumption on a sequence $(f_\eps)_\eps$ of test functions $f_\eps : \D \to \R$:

\begin{assumption}\label{assumption_feps}
For all $\eps >0$, $f_\eps$ is compactly supported in $\{ x \in \overline{\D}: \d(x,\partial \D) < \eps \}$ and $\int_\D f_\eps =1$. Moreover,
\begin{gather}
\label{E:assumption_feps}
\sup_\eps \int_{\D \times \D} \max(1, -\log|x-y|) f_\eps(x) f_\eps(y) \d x \d y < \infty,\\
\label{E:assumption_feps2}
\text{and} \quad \lim_{\delta \to 0} \limsup_{\eps \to 0} \int_{\D \times \D} \indic{|x-y| < \delta} \max(1, -\log|x-y|) f_\eps(x) f_\eps(y) \d x \d y = 0.
\end{gather}
\end{assumption}

\begin{example}\label{Example}
If $f : \C \to \R$ is a smooth test function compactly supported in $\D$ with $\int f =1$, then one can check that the sequence $f_\eps : x \in \D \mapsto \frac{1}{\eps} f( \frac{1-|x|}{\eps} \frac{x}{|x|} )$, $\eps >0$, satisfies Assumption \ref{assumption_feps}.
For instance, \eqref{E:assumption_feps} boils down to the fact that the integral
\[
\int_{\D \times \D} \max\Big(1, -\log \Big| \frac{x}{|x|} - \frac{y}{|y|} \Big| \Big) f(x) f(y) \d x \d y
\]
is finite
(a logarithmic singularity is integrable in dimension 1).

\end{example}

\begin{theorem}\label{T:level_line}
Consider a loop $\Pc$ sampled according to $\pint$ (see Proposition \ref{prop:decomp}).
\begin{itemize}
\item
(Constant expectation)
For all deterministic test function $f$,
\begin{equation}
\label{E:T_level_line_expectation}
\Eint \Big[ \int f(x) L_x(\Pc) \d x \Big] = \frac{5}{\pi} \int_\D f.
\end{equation}
\item
(Constant boundary conditions) Let $(f_\eps)_\eps$ be a sequence of deterministic test functions satisfying Assumption~\ref{assumption_feps}. Then
\begin{equation}
\label{E:T_level_line_bc}
\int f_\eps(x) L_x(\Pc) \d x \xrightarrow[\eps \to 0]{L^1(\pint)} \frac{5}{\pi}.
\end{equation}
\end{itemize}
\end{theorem}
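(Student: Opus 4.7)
The plan is to exploit that $\pint$ is preserved in law by every conformal automorphism of $\D$ (Proposition \ref{prop:decomp}). Since the occupation density is a functional of the trace alone via the Minkowski content characterisation recalled in Section \ref{S:preliminaries}, one has the conformal change-of-variable identity
\[ \int_\D g(y) L_y(\varphi(\Pc))\,\d y = \int_\D g(\varphi(x)) L_x(\Pc) |\varphi'(x)|^2\,\d x \]
for every smooth $g$ and every Möbius $\varphi : \D \to \D$. Taking expectations and using invariance in law, the measure $\mu(\d x) := \Eint[L_x(\Pc)]\,\d x$ on $\D$ satisfies $\mu(g) = \mu(g \circ \varphi \cdot |\varphi'|^2)$ for every such $\varphi$; this forces $\mu$ to be a constant $\lambda_0$ times Lebesgue by transitivity of Möbius maps on $\D$ (Corollary \ref{C:1point_constant}), and \eqref{E:T_level_line_expectation} follows up to identifying $\lambda_0 = 5/\pi$.

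\textbf{Identifying $\lambda_0 = 5/\pi$ (Lemma \ref{L:lambda}).} I would evaluate $\int T(\Pc)\, h(\area(\inte(\out(\Pc))))\,\loopm(\d\Pc)$ in two ways. Via the bridge representation \eqref{E:loopm} and Brownian scaling (under $\P^{t,z,z}$ the area has the same law as $tA$, with $A$ the area under $\P^{1,0,0}$), the change of variables $u = tA$ gives $\tfrac{|\C|}{2\pi}\int_0^\infty h(u)/u\,\d u$ (with $|\C|$ standing symbolically for the translation-invariant factor). Alternatively, via \eqref{E:decompo_loopmeasure} combined with the identity $T(\varphi_\gamma(\Pc)) = \int_\D L_y(\Pc)|\varphi_\gamma'(y)|^2\,\d y$ (a consequence of the change-of-variable formula above applied to $g \equiv 1$), one has $\E_{\loopm(\cdot|\gamma)}[T(\Pc)] = \lambda_0\, \area(\inte(\gamma))$, whence the integral equals $\lambda_0 \int \nu_\SLE(\d\gamma)\, \area(\inte(\gamma))\, h(\area(\inte(\gamma)))$. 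A parallel computation of $\int h(\area)\,\loopm$, whose bridge side invokes the Garban--Trujillo Ferreras identity $\E^{1,0,0}[A] = \pi/5$, identifies the push-forward of $\nu_\SLE$ by the area functional (modulo translation) as $(10u^2)^{-1}\,\d u$. Matching the two expressions yields $\lambda_0 = 10/(2\pi) = 5/\pi$.

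\textbf{Constant boundary conditions.} Given the first part, $\Eint[\int f_\eps L_x\,\d x] = 5/\pi$ exactly, so $L^1$ convergence to $5/\pi$ follows by Cauchy--Schwarz once one shows $\var_{\pint}(\int f_\eps L_x\,\d x) \to 0$. The variance equals
\[ \iint f_\eps(x) f_\eps(y) \bigl(\Eint[L_x L_y] - \lambda_0^2\bigr)\,\d x\,\d y, \]
which I would split according to whether $|x-y| \geq \delta$ or $|x-y| < \delta$. In the first regime, the decorrelation Lemma \ref{L:second_good} asserts $\Eint[L_x L_y] \to \lambda_0^2$ as $x, y \to \partial\D$ with $|x-y| \geq \delta$, making the contribution vanish for each fixed $\delta > 0$. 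In the second regime, a two-point upper bound of the form $\Eint[L_x L_y] \leq C\max(1, -\log|x-y|)$, obtained by decomposing the two-point function along excursions away from $\partial\D$ and invoking Corollary \ref{C:upper_bound_2point}, together with the vanishing condition \eqref{E:assumption_feps2}, makes the contribution go to $0$ as $\delta \to 0$ uniformly in $\eps$. The main obstacle is Lemma \ref{L:second_good} itself, whose proof is the technical heart of the paper and relies on the partial exploration of the outer boundary from Section \ref{S:conditioning} together with the excursion estimates of Corollaries \ref{C:JLQ}, \ref{C:interval_excursion} and \ref{C:upper_bound_2point}.
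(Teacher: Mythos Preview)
Your argument for constancy of the expected occupation density via Möbius invariance is the paper's approach (Corollary \ref{C:1point_constant}). For the value of $\lambda_0$, your two-sided computation via $\int T(\Pc)\,h(\area)\,\loopm$ is morally the same idea as the paper's Lemma \ref{L:lambda}, which evaluates $\int T\,\loopm$ and $\lambda_0\int\area\,\loopm$ on $\{\wp\subset[0,1]^2,\ \diam(\wp)>\eps\}$ and extracts the ratio from the $\eps\to0$ asymptotics of each. Your version with the ``symbolic $|\C|$'' is not rigorous: both sides are infinite, and turning the heuristic into a proof requires exactly the kind of spatial/scale cutoff and boundary-effect control that the paper carries out.

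The substantive gap is in the boundary-conditions part. You assert that Lemma \ref{L:second_good} gives the unconditional decorrelation $\Eint[L_xL_y]\to\lambda_0^2$ as $x,y\to\partial\D$ with $|x-y|\geq\delta$; it does not. What is actually proved is
\[
\limsup_{\eps\to0}\iint \Eint\bigl[L_xL_y\,\mathbf{1}_{G_\delta'(x/|x|)\cap G_\delta'(y/|y|)}\bigr]\,f_\eps(x)f_\eps(y)\,\indic{|x-y|>\delta^{1/100}}\,\d x\,\d y\ \le\ (5/\pi)^2,
\]
i.e.\ the bound is conditional on a \emph{good event} $G_\delta'$ forbidding long or intrusive excursions near $x/|x|$ and $y/|y|$. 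The decorrelation mechanism (partial exploration of $\out(\Pc)$ via Lemma \ref{lem:wired}, then a change of measure decoupling the topmost excursion) only works once one knows that the excursions contributing to $L_x$ and those contributing to $L_y$ are disjoint and geometrically separated; the good event is precisely what enforces this. Consequently the paper does \emph{not} prove $\var(I_\eps)\to0$. It introduces $I_\eps^{\mathrm g}=\int f_\eps L_x\,\mathbf{1}_{G_\delta'(x/|x|)}\,\d x$ and proves, as a separate and nontrivial step (Lemma \ref{L:first_good}), that $\Eint[I_\eps-I_\eps^{\mathrm g}]\to0$; the $L^1$ bound on $|I_\eps-5/\pi|$ is then assembled from $(\Eint[(I_\eps^{\mathrm g})^2]-(5/\pi)^2)_+^{1/2}$ together with $\Eint[I_\eps-I_\eps^{\mathrm g}]$. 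Your outline collapses Lemmas \ref{L:first_good} and \ref{L:second_good} into a single unconditional second-moment statement that the paper neither states nor proves, and bounding $\Eint[L_xL_y\,\mathbf{1}_{G^c}]$ (which your scheme would require) is not a consequence of the available estimates.
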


The proof of Theorem \ref{T:main} assuming Theorem \ref{T:level_line} will essentially follow from the following lemma. Together with Example \ref{Example}, this lemma also gives a wide family of examples of test functions satisfying Assumption \ref{assumption_feps_main}.

\begin{lemma}\label{L:equivalence_assumptions}
    For $\nu_\SLE$-almost all $\gamma$ and any conformal map $\varphi_\gamma : \inte(\gamma) \to \D$, there exist a constant $c=c(\gamma) \in (0,\infty)$ and a deterministic exponent $h>0$ such that the following holds.
    \begin{itemize}
        \item
    If $(f_\eps)_\eps$ is a sequence of test functions satisfying Assumption \ref{assumption_feps_main}, then $(\tilde f_\eps)_\eps$ satisfies Assumption~\ref{assumption_feps} where, for $\eps >0$,
    \begin{equation}
        \label{E:L_equivalence1}
    \tilde f_\eps \quad : \quad y \in \overline{\D} \quad \longmapsto \quad f_{(c \eps)^{1/h}}(\varphi_\gamma^{-1}(y)) |(\varphi^{-1})'(y)|^2. 
    \end{equation}
        \item
    Conversely, if $(\tilde f_\eps)_\eps$ is a sequence of test functions satisfying Assumption \ref{assumption_feps}, then $(f_\eps)_\eps$ satisfies Assumption \ref{assumption_feps_main} where, for $\eps>0$,
    \begin{equation}
    f_\eps \quad : \quad x \in \overline{\inte(\gamma)} \quad \longmapsto \quad \tilde f_{(c \eps)^{1/h}}(\varphi_\gamma(x)) |\varphi_\gamma'(x)|^2.
    \end{equation}
    \end{itemize}
\end{lemma}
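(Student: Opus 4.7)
The strategy is to reduce everything to Hölder regularity of the conformal map $\varphi_\gamma$ and its inverse at the boundary, and then verify each of the three parts of Assumption~\ref{assumption_feps} by a change of variables.

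\textbf{Step 1 (Hölder regularity at the boundary).} Since $\gamma$ is $\nu_\SLE$-a.s. an SLE$_{8/3}$-type Jordan curve, the conformal map $\varphi_\gamma : \inte(\gamma) \to \D$ extends to a homeomorphism $\overline{\inte(\gamma)} \to \overline{\D}$. Moreover, by the Rohde--Schramm-type Hölder regularity of SLE$_{\kappa}$ traces for $\kappa<8$, there exists a deterministic exponent $h \in (0,1]$ (depending only on $\kappa = 8/3$) and a random constant $K=K(\gamma) < \infty$ such that
\begin{equation*}
|\varphi_\gamma(x_1) - \varphi_\gamma(x_2)| \le K |x_1-x_2|^h
\quad\text{and}\quad
|\varphi_\gamma^{-1}(y_1) - \varphi_\gamma^{-1}(y_2)| \le K |y_1-y_2|^h
\end{equation*}
for all $x_i \in \overline{\inte(\gamma)}$ and $y_i \in \overline{\D}$. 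Set $c(\gamma) = 1/K$.

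\textbf{Step 2 (Forward direction).} Fix a sequence $(f_\eps)_\eps$ satisfying Assumption~\ref{assumption_feps_main} and write $\delta = \delta(\eps) = (c\eps)^{1/h}$. By the conformal change of variables (the Jacobian of $\varphi_\gamma^{-1}$ equals $|(\varphi_\gamma^{-1})'(y)|^2$), $\int_\D \tilde f_\eps(y)\,\d y = \int_{\inte(\gamma)} f_\delta(x)\,\d x = 1$. For the support condition, if $\tilde f_\eps(y) \neq 0$ then $\d(\varphi_\gamma^{-1}(y),\gamma) < \delta$; picking $x_0 \in \gamma$ with $|\varphi_\gamma^{-1}(y)-x_0| < \delta$ and applying the Hölder bound on $\varphi_\gamma$ yields $\d(y,\partial\D) \le |y-\varphi_\gamma(x_0)| \le K\delta^h = K c \eps = \eps$.

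For the integrability condition~\eqref{E:assumption_feps}, write $x_i := \varphi_\gamma^{-1}(y_i)$. The Hölder bound on $\varphi_\gamma^{-1}$ rearranges to $|y_1-y_2| \ge K^{-1/h} |x_1-x_2|^{1/h}$, hence
\begin{equation*}
\max(1,-\log|y_1-y_2|) \le C_\gamma \max(1,-\log|x_1-x_2|)
\end{equation*}
for some $C_\gamma < \infty$. Changing variables gives
\begin{equation*}
\int_{\D \times \D} \max(1,-\log|y_1-y_2|)\, \tilde f_\eps(y_1)\tilde f_\eps(y_2)\,\d y_1 \d y_2 \le C_\gamma \int_{\inte(\gamma)^2} \max(1,-\log|x_1-x_2|)\, f_\delta(x_1) f_\delta(x_2) \,\d x_1 \d x_2,
\end{equation*}
which is uniformly bounded in $\eps$ by \eqref{E:assumption_feps_main}. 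For condition~\eqref{E:assumption_feps2}, note that $|y_1-y_2| < \delta'$ entails $|x_1-x_2| \le K (\delta')^h$. The same change of variables therefore controls the left-hand side by an expression of the form in \eqref{E:assumption_feps_main2} with $K(\delta')^h$ in place of $\delta$; letting first $\eps \to 0$ and then $\delta' \to 0$ makes it vanish.

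\textbf{Step 3 (Converse direction).} This is entirely symmetric: given $(\tilde f_\eps)_\eps$ satisfying Assumption~\ref{assumption_feps}, the same argument with the roles of $\varphi_\gamma$ and $\varphi_\gamma^{-1}$ swapped (and the same exponent $h$ and a possibly different $c=c(\gamma)$) verifies the three items of Assumption~\ref{assumption_feps_main}.

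\textbf{Main obstacle.} The nontrivial input is the two-sided Hölder regularity of $\varphi_\gamma$ at $\partial\D$ with a deterministic exponent $h>0$ for $\nu_\SLE$-a.e.\ $\gamma$. Once this is available, the rest is a routine change of variables combined with the logarithmic integrability built into the two assumptions. The deterministic exponent ensures that the same $h$ works for the entire statement; only the multiplicative constant $c(\gamma)$ (absorbing $K(\gamma)$ and $C_\gamma$) needs to depend on $\gamma$.
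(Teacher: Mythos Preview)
Your proof is correct and follows essentially the same approach as the paper: both identify the two-sided H\"older continuity of $\varphi_\gamma$ and $\varphi_\gamma^{-1}$ (with deterministic exponent, citing Rohde--Schramm-type results for SLE$_{8/3}$) as the key input, and then verify the three parts of the assumption by a conformal change of variables. If anything, you are slightly more explicit than the paper in tracking which direction of the H\"older bound is used at each step and in checking condition~\eqref{E:assumption_feps2}.
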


\begin{proof}[Proof of Lemma \ref{L:equivalence_assumptions}]
The key property is that for $\nu_\SLE$-almost all $\gamma$, $\varphi_\gamma$ and $\varphi_\gamma^{-1}$ are uniformly H\"older continuous: there exists $c = c(\gamma) \in (0,\infty)$ and $h>0$ such that for all $y, y' \in \overline{\D}$ and $x,x' \in \overline{\inte(\gamma)}$,
\begin{equation}\label{E:holder}
|\varphi_\gamma^{-1}(y) - \varphi_\gamma^{-1}(y')| \leq c^{-1} |y-y'|^h
\quad \text{and} \quad
|\varphi_\gamma(x) - \varphi_\gamma(x')| \leq c^{-1} |x-x'|^h.
\end{equation}
Indeed, $\gamma$ is locally an SLE$_{8/3}$ curve and the map uniformising the complement of an SLE$_\kappa$ curve is known to be H\"older continuous when $\kappa < 4$; see \cite{MR2153402} (see also \cite{MR3786302} for the optimal H\"older exponent~$h$).

Now, consider a sequence of test functions $(f_\eps)_\eps$ satisfying Assumption \ref{assumption_feps_main} and define $\tilde f_\eps$ as in \eqref{E:L_equivalence1} with $c$ and $h$ as above.
The inequality \eqref{E:holder} for $\varphi_\gamma$ together with the fact that the support of $f_\eps$ is contained in $\{ x \in \overline{\inte(\gamma)} : \d(x,\gamma) < \eps \}$ implies that $\tilde f_\eps$ is supported in $\{ y \in \overline{\D}: \d(y,\partial \D) < \eps \}$. The fact that $\int_\D \tilde f_\eps = 1$ follows directly from the fact that
$\int_{\inte(\gamma)} f_\eps =1$ together with a change of variable. 
Finally, to check \eqref{E:assumption_feps}, we compute with the help of the change of variables $y=\varphi_\gamma(x)$, $y' = \varphi_\gamma(x')$ and $\tilde \eps = (c \eps)^{1/h}$:
\begin{align*}
    & \sup_\eps \int_{\D \times \D} \max(1, - \log|y-y'|) \tilde f_\eps(y) f_\eps(y') \d y \d y' \\
    & = \sup_{\tilde \eps} \int_{\inte(\gamma) \times \inte(\gamma)} \max(1, -\log |\varphi_\gamma(x) - \varphi_\gamma(x')|) f_{\tilde\eps}(x) f_{\tilde\eps}(x') \d x \d x' \\
    & \leq C \sup_{\tilde\eps} \int_{\inte(\gamma) \times \inte(\gamma)} \max(1, -\log |x-x'|) f_{\tilde\eps}(x) f_{\tilde\eps}(x') \d x \d x'.
\end{align*}
In the last inequality we used that $\varphi_\gamma$ is H\"older.
The above right hand side is bounded by assumption on $(f_\eps)_\eps$. This shows that $(\tilde f_\eps)_\eps$ satisfies \eqref{E:assumption_feps}. \eqref{E:assumption_feps2} is similar. This concludes the proof that $(\tilde f_\eps)_\eps$ satisfies Assumption \ref{assumption_feps}. The reverse direction is analogous.
\end{proof}

We now prove Theorem \ref{T:main} assuming Theorem \ref{T:level_line}.

\begin{proof}[Proof of Theorem \ref{T:main}, assuming Theorem \ref{T:level_line}]
Let $\gamma$ be a continuous simple loop and $\Pc$ be sampled according to $\loopm(\cdot \vert \gamma)$, as in the statement of Theorem \ref{T:main}. Let $\varphi_\gamma : \D \to \inte(\gamma)$ be any fixed conformal map.
Recall that $\Pc^\inte := \varphi_\gamma^{-1}(\Pc)$ is distributed according to $\pint$ and is independent of $\gamma$; see Proposition \ref{prop:decomp}.

Let $f$ be a test function. By doing the change of variables $x = \varphi_\gamma(y)$, we have
\begin{align}\label{E:conf_cov}
    \E \Big[ \int f(x) L_x(\Pc) \d x \Big\vert \gamma \Big]
    =
    \E \Big[ \int_\D f(\varphi_\gamma(y)) \varphi_\gamma^* (L_x(\Pc) \d x)(\d y) \Big\vert \gamma \Big],
\end{align}
where $\varphi_\gamma^*(L_x(\Pc) \d x)$ is the pullback of $L_x(\Pc) \d x$ by $\varphi_\gamma$.
By conformal covariance of Brownian motion, one can show that, almost surely,
\[
\varphi_\gamma^*(L_x(\Pc) \d x) = |\varphi_\gamma'(y)|^2 L_y(\Pc^\inte) \d y.
\]
This is a classical result but also corresponds to the case $\gamma_{\mathrm{GMC}}=0$ of the conformal covariance property of Brownian multiplicative chaos; see \cite{bass1994, jegoBMC, AidekonHuShi2018}.
Because $\Pc^\inte$ is independent of~$\gamma$, we have by Theorem \ref{T:level_line}, \eqref{E:T_level_line_expectation}, that
\[
\E \Big[ \int f(x) L_x(\Pc) \d x \Big\vert \gamma \Big] =
\frac5\pi
\int_\D f(\varphi_\gamma(y)) |\varphi_\gamma'(y)|^2 \d y
= \frac5\pi \int_{\inte(\gamma)} f(x) \d x.
\]
This proves \eqref{E:T_main_expectation}.

Moving to the proof of \eqref{E:T_main_bc}, let $(f_\eps)_\eps$ be a sequence of test functions satisfying Assumption \ref{assumption_feps_main}. 
By Lemma \ref{L:equivalence_assumptions}, the sequence $(\tilde f_\eps)_\eps$ defined in \eqref{E:L_equivalence1} satisfies Assumption \ref{assumption_feps}. We can thus perform the same change of variables as above and use Theorem \ref{T:level_line}, \eqref{E:T_level_line_bc}, to deduce that
$\int f_\eps(x) L_x(\Pc) \d x \to \frac{5}{\pi}$ in $L^1$. This concludes the proof.
\end{proof}

Finally, we prove Theorem \ref{T:intro}, assuming Theorem \ref{T:main}:

\begin{proof}[Proof of Theorem \ref{T:intro}, assuming Theorem \ref{T:main}]
For $t>0$ and $z \in \C$, we denote by $\nu_{z,t}$ the push forward of $\P^{t,z,z}$ by the map $\Pc \in \Lc \mapsto \out(\Pc) \in \Gamma$, where we recall that $\P^{t,z,z}$ is the probability law of a Brownian bridge from $z$ to $z$ with duration $t$ and the spaces $\Lc$ and $\Gamma$ are introduced in Definitions \ref{Def:space_loops} and \ref{Def:space_self-avoiding} respectively. 
Since $\nu_\SLE$ is the push forward of $\loopm$ by the map $\Pc \mapsto \out(\Pc)$ \cite{MR2350053}, and by the identity \eqref{E:loopm}, the probability measures $\nu_{z,t}$ are related to $\nu_\SLE$ by
\[
\nu_\SLE = \int_\C \d z \int_0^\infty \frac{\d t}{t} \frac{1}{2\pi t} \nu_{z,t}.
\]
Let $z\in \C$, $t>0$ and $\gamma$ be a continuous self-avoiding loop. Let $(f_\eps)_\eps$ be a sequence of test functions $f_\eps : \inte(\gamma) \to \R$ satisfying Assumption \ref{assumption_feps_main}. Let $E_{z,t}(\gamma)$ be the event that
\begin{align}\label{eq:f_eps}
\int f_\eps(x) L_x(\Pc) \d x \xrightarrow[\eps \to 0]{} \frac{5}{\pi} \qquad \text{in} \quad \mathrm{L}^1(\P^{t,z,z}(\cdot \vert \gamma)).
\end{align}
By translation and scaling, $\nu_{z,t}(E_{z,t}(\gamma))$ does not depend on $z$ or $t$ and we know by Theorem \ref{T:main} that
\[
\int_\C \d z \int_0^\infty \frac{\d t}{t} \frac{1}{2\pi t} \nu_{z,t}(E_{z,t}(\gamma)^c) = 0.
\]
Hence $\nu_{z,t}(E_{z,t}(\gamma))=1$, that is to say, Theorem \ref{T:intro} holds for $\nu_{z,t}$-almost all $\gamma$, for $\Pc \sim \P^{t,z,z}(\cdot \vert \gamma)$.

We now specify this result to $z=0$ and $t=2$ and cut the Brownian bridge $\Pc = (\Pc_t)_{t \in [0,2]}$ into two pieces $\Pc^1 = (\Pc_t)_{t \in [0,1]}$ and $\Pc^2 = (\Pc_{1+t})_{t \in [0,1]}$. Notice that, for almost all realisations of $\Pc^1$ such that $\out(\Pc^1)$ is a simple loop, conditionally on $\Pc^1$, the probability that $\Pc^2$ stays in $\inte(\out(\Pc^1))$ and at a positive distance to $\out(\Pc^1)$ is positive. On this event, the outer boundaries of $\Pc$ and $\Pc^1$ coincide and their respective local times in the vicinity of $\out(\Pc)$ also agree. Therefore, Theorem \ref{T:intro} also holds for $\Pc^1$. Since $\Pc^1$ is mutually absolutely continuous with respect to a Brownian motion starting at $0$ of duration 1, it concludes the proof.

In general, when we consider a simple curve $\xi \subset \out(\Pc^1)$, conditionally on $\Pc^1$ and $\xi$, there is still a positive probability that $\Pc^2$ remains at a positive distance to $\xi$ and that $\out(\Pc) \supset \xi$. The proof then follows by the same reasoning.
\end{proof}

The rest of this section is dedicated to the proof of Theorem \ref{T:level_line}.
Section \ref{SS:correl} introduces ``correlation functions'' associated to the occupation measure.
The proofs of \eqref{E:T_level_line_expectation} and \eqref{E:T_level_line_bc} will be obtained in Sections \ref{SS:second_first} and \ref{SS:second_moment} respectively.

\subsection{Correlation functions}\label{SS:correl}

In this section, we show that the measures $\Eint[L_x(\Pc) \d x]$ and $\Eint[L_x(\Pc) L_y(\Pc) \d x \d y]$ have densities w.r.t. Lebesgue measures. Inspired by the physics literature, we call these densities correlation functions and denote them using $\langle \cdot \rangle$.
We first consider the one-point case.

\begin{lemma}\label{L:1point}
    The measure $\Eint[L_x(\Pc) \d x]$ possesses a density with respect to Lebesgue measure in $\D$ given by
    \begin{equation}
        \label{E:L_1point1}
    x \in \D \quad \longmapsto \quad
    \frac1\pi \E \Big[ \sum_{e \in \Ec_{\partial \D}} \frac{H_\D(x,a(e)) H_\D(x,b(e))}{H_\D(a(e),b(e))} \Big].
    \end{equation}
    We will denote this density by $x \in \D \mapsto \corint{L_x(\Pc)}$.
    Moreover, for all $x \in \D$, we have
    \begin{equation}
        \label{E:L_1point2}
    \corint{L_x(\Pc)} = \frac1\pi \lim_{r \to 0} |\log r| \pint(\Pc \cap B(x,r) \neq \varnothing).
    \end{equation}
    More generally, for any measurable function $F:\C \times \Lc \to [0,1]$, the measure $\Eint[F(x,\Pc) L_x(\Pc) \d x]$ possesses a density with respect to Lebesgue measure in $\D$ that we will denote by $\corint{F(x,\Pc)L_x(\Pc)}$. It is given by
\begin{equation}
    \label{E:L_1point3}
    x \in \D \quad \longmapsto \quad
    \frac1\pi \E \Big[ \sum_{e \in \Ec_{\partial \D}} \frac{H_\D(x,a(e)) H_\D(x,b(e))}{H_\D(a(e),b(e))} F(x,\Pc_x^e ) \Big],
\end{equation}
where the loop $\Pc_x^e$ is obtained from $\Pc$ by replacing the excursion $e$ by an excursion $e_x$. This excursion $e_x$ is independent of the other excursions conditionally on their endpoints, and has the law of an excursion from $a(e)$ to $b(e)$ conditioned to visit $x$: it is the concatenation of an excursion from $a(e)$ to $x$ and an excursion from $x$ to $b(e)$.
\end{lemma}

\begin{proof}
    \eqref{E:L_1point2} is a consequence of the fact that the occupation measure of $\Pc$ coincides with its Minkowski content in the gauge $r \mapsto \frac1\pi r^2 |\log r|$; see \eqref{E:Minkowski123}.
    For \eqref{E:L_1point1}, a small calculation first gives that for any distinct boundary points $a, b \in \partial \D$, the expectation of the occupation measure of a Brownian excursion from $a$ to $b$ in $\D$ possesses a density w.r.t. Lebesgue measure on $\D$ given by
    \[
    x \in \D \quad \longmapsto \quad
    \frac1\pi \frac{H_\D(x,a) H_\D(x,b)}{H_\D(a,b)}.
    \]
    \eqref{E:L_1point1} is then obtained by Fubini. \eqref{E:L_1point3} is a direct generalisation.
\end{proof}

\textit{A priori} the density $\corint{L_x(\Pc)}$ is only well defined for Lebesgue-almost every $x \in \D$, but the right hand side of \eqref{E:L_1point1} defines it for all $x \in \D$.

The following lemma considers the law $\loopm(\cdot\vert\gamma)$ and relates the associated one-point function to $\corint{\cdot}$.

\begin{lemma}\label{L:1point_gamma}
    For $\nu_\SLE$-almost every $\gamma$, the following holds. Let $F: \inte(\gamma) \times \Lc \to [0,1]$ be a measurable map. The following measure is absolutely continuous w.r.t. Lebesgue measure in $\inte(\gamma)$:
    \[
    \int \loopmeasure(\d \Pc \vert \gamma) F(x,\Pc) L_x(\Pc) \d x = \langle F(x,\Pc) L_x(\Pc) \vert \gamma \rangle^{\rm loop} \d x,
    \]
    where the integral is w.r.t. $\Pc$ and the right hand side is defined by this identity.
    Moreover, if $\varphi_\gamma : \D \to \inte(\gamma)$ is a conformal map, then for all $x \in \inte(\gamma)$,
    \begin{equation}
        \label{E:L_1point4}
    \langle F(x,\Pc) L_x(\Pc) \vert \gamma \rangle^{\rm loop}
    = \corint{F(x, \varphi_\gamma(\Pc)) L_{\varphi_\gamma^{-1}(x)}(\Pc)}.
    \end{equation}
\end{lemma}
We emphasise that on the right hand side of \eqref{E:L_1point4}, $\gamma$ is considered fixed.
\begin{proof}
    Let $\Pc^\inte$ be distributed from $\pint$ and let $\Pc := \varphi_\gamma(\Pc^\inte)$. By Proposition \ref{prop:decomp}, $\Pc$ follows the law $\loopmeasure(\cdot \vert \gamma)$. Let $f: \inte(\gamma) \to [0,\infty)$ be a test function.
    By doing the change of variable $x=\varphi_\gamma(y)$ and conformal covariance of the occupation measure (see \eqref{E:conf_cov}),
    \begin{align*}
        & \int \loopmeasure(\d \Pc \vert \gamma) \int_{\inte(\gamma)} f(x) F(x,\Pc) L_x(\Pc) \d x \\
        & = \Eint \Big[ \int_\D f(\varphi_\gamma(y)) F(\varphi_\gamma(y),\varphi_\gamma(\Pc^\inte))
        |\varphi_\gamma'(y)|^2 L_y(\Pc^\inte) \d y \vert \gamma \Big].
    \end{align*}
    By Lemma \ref{L:1point} and then by undoing the change of variable, this is further equal to
    \begin{align*}
        & \int_\D f(\varphi_\gamma(y)) \corint{F(\varphi_\gamma(y),\varphi_\gamma(\Pc^\inte)) L_y(\Pc^\inte) }
        |\varphi_\gamma'(y)|^2  \d y\\
        & = \int_{\inte(\gamma)} f(x) \corint{F(x,\varphi_\gamma(\Pc^\inte)) L_{\varphi_\gamma^{-1}(x)}(\Pc^\inte) } \d x. 
    \end{align*}
    Since this is true for any test function $f$, this proves the existence of the desired density as well as its identification.
\end{proof}

We now considers two-point functions.

\begin{lemma}\label{L:2point}
The measure $\Eint[L_x(\Pc) L_y(\Pc) \d x \d y]$ possesses a density with respect to Lebesgue measure on $\D \times \D$ given by
\begin{align}\label{E:L_2point}
    (x,y) \in \D \times \D \quad \longmapsto \quad & \frac1\pi G_\D(x,y) \E \Big[ \sum_{e \in \Ec_{\partial \D}} \frac{H_\D(x,a(e)) H_\D(y,b(e)) + H_\D(x,b(e)) H_\D(y,a(e))}{H_\D(a(e),b(e))} \Big] \\
    & + \frac{1}{\pi^2} \E \Big[ \sum_{e \neq e' \in \Ec_{\partial \D}} \frac{H_\D(x,a(e)) H_\D(x,b(e))}{H_\D(a(e),b(e))} \frac{H_\D(y,a(e')) H_\D(y,b(e'))}{H_\D(a(e'),b(e'))} \Big]. \notag
\end{align}
We will denote this density by $(x,y) \in \D \times \D \mapsto \corint{L_x(\Pc)L_y(\Pc)}$.
Moreover, for all $x,y \in \D$ with $x \neq y$, we have
\begin{align}\label{E:L_2point2}
\corint{L_x(\Pc)L_y(\Pc)}
= \frac{1}{\pi^2} \lim_{r \to 0} |\log r|^2 \pint(\Pc \cap B(x,r) \neq \varnothing, \Pc \cap B(y,r) \neq \varnothing).
\end{align}
    More generally, for any measurable function $F:\C \times \C \times \Lc \to [0,1]$, the following measure is abolutely continuous w.r.t. Lebesgue measure in $\D \times \D$:
    \begin{equation}
        \label{E:L_2point3}
        \Eint[F(x,y,\Pc) L_x(\Pc) \d x L_y(\Pc) \d y] = \corint{F(x,y,\Pc)L_x(\Pc) L_y(\Pc)} ~\d x \d y,
    \end{equation}
    where the density on the right hand side is defined by this identity.
\end{lemma}

\begin{proof}
    \eqref{E:L_2point2} is a consequence of the fact that the occupation measure of $\Pc$ coincides with its Minkowski content in the gauge $r \mapsto \frac1\pi r^2 |\log r|$; see \eqref{E:Minkowski123} for more details. To prove \eqref{E:L_2point}, let $a,b,a',b' \in \D$ be pairwise distinct boundary points and let $e$ and $e'$ be independent Brownian excursions in $\D$ from $a$ to $b$ and from $a'$ to $b'$ respectively. A small calculation shows that the measures $\E[L_x(e)L_y(e) \d x \d y]$ and $\E[L_x(e)L_y(e') \d x \d y]$ have densities with respect to Lebesgue measure on $\D \times \D$ respectively given by
    \begin{align}\label{E:ghh}
        (x,y) \in \D \times \D \quad \longmapsto \quad & \frac1\pi G_\D(x,y) \frac{H_\D(x,a) H_\D(y,b) + H_\D(x,b) H_\D(y,a)}{H_\D(a,b)} \\
        \text{and} \qquad (x,y) \in \D \times \D \quad \longmapsto \quad & \frac{1}{\pi^2} \frac{H_\D(x,a) H_\D(x,b)}{H_\D(a,b)} \frac{H_\D(y,a') H_\D(y,b')}{H_\D(a',b')}.
    \end{align}
    The first term in \eqref{E:ghh} corresponds to the case where the points are visited by the excursion $e$ in the order $a \to x \to y \to b$ while the second term corresponds to $a \to y \to x \to b$.
    \eqref{E:L_2point} then follows by Fubini. \eqref{E:L_2point3} is a direct generalisation.
    
\end{proof}

\begin{lemma}\label{L:2point_gamma}
    For $\nu_\SLE$-almost every $\gamma$, the following holds. Let $F: \inte(\gamma) \times \inte(\gamma) \times \Lc \to [0,1]$ be a measurable map. The following measure is absolutely continuous w.r.t. Lebesgue measure in $\inte(\gamma)\times \inte(\gamma)$:
    \[
    \int \loopmeasure(\d \Pc \vert \gamma) F(x,y,\Pc) L_x(\Pc) \d x L_y(\Pc) \d y = \langle F(x,y,\Pc) L_x(\Pc) L_y(\Pc) \vert \gamma \rangle^{\rm loop} \d x \d y,
    \]
    where the integral is w.r.t. $\Pc$ and the right hand side is defined by this identity.
    Moreover, if $\varphi_\gamma : \D \to \inte(\gamma)$ is a conformal map, then for all $x, y \in \inte(\gamma)$,
    \[
    \langle F(x, y,\Pc) L_x(\Pc) L_y(\Pc) \vert \gamma \rangle^{\rm loop}
    = \corint{F(x,y, \varphi_\gamma(\Pc)) L_{\varphi_\gamma^{-1}(x)}(\Pc) L_{\varphi_\gamma^{-1}(y)}(\Pc)}.
    \]
\end{lemma}
\begin{proof}
    The proof is similar to the one of Lemma \ref{L:1point_gamma}. We omit the details.
\end{proof}

\subsection{First moment - Proof of \texorpdfstring{\eqref{E:T_level_line_expectation}}{(4.3)}}\label{SS:second_first}

\begin{lemma}\label{L:1point_constant}
    $\corint{L_x(\Pc)}$ does not depend on $x \in \D$. We will denote this common value by $\lambda_0$.
\end{lemma}

\begin{proof}
Let $x,y \in \D$ and let $\varphi : \D \to \D$ be a conformal transformation mapping $x$ to $y$. By Lemma~\ref{L:1point} and conformal invariance of $\Ec_{\partial \D}$ (see Proposition \ref{prop:decomp}) and then by performing a change of variable $e' = \varphi(e)$, we have
\[
\corint{L_y(\Pc)}
= \frac1\pi \E \Big[ \sum_{e' \in \varphi(\Ec_{\partial \D})} \frac{H_\D(y,a(e')) H_\D(y,b(e'))}{H_\D(a(e'),b(e'))} \Big]
= \frac1\pi \E \Big[ \sum_{e \in \Ec_{\partial \D}} \frac{H_\D(y,\varphi(a(e))) H_\D(y,\varphi(b(e)))}{H_\D(\varphi(a(e)),\varphi(b(e)))} \Big].
\]
By conformal covariance of the Poisson kernel (see \eqref{E:Poisson_conformal} and notice that the derivatives of $\varphi$ at the points $a(e)$ and $b(e)$ cancel out), we deduce that
\[
\corint{L_y(\Pc)}
= \frac1\pi \E \Big[ \sum_{e \in \Ec_{\partial \D}} \frac{H_\D(x,a(e)) H_\D(x,b(e))}{H_\D(a(e),b(e))} \Big]
= \corint{L_x(\Pc)}
\]
by Lemma \ref{L:1point}. This concludes the proof.
\end{proof}

\begin{lemma}\label{L:lambda}
$\lambda_0 = 5/\pi$.
\end{lemma}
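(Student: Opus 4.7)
The plan is to relate $\lambda_0$ to the expected enclosed area of the outer boundary of a Brownian bridge via Lemma~\ref{L:1point_loopmeasure}, and then invoke the Garban--Trujillo Ferreras result \cite{MR2217292} that this expected area equals $\pi/5$.

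First, I will integrate \eqref{E:L_1point_loopmeasure} in $x$ over all of $\C$. Since $\int_\C L_x(\wp)\,\d x = T(\wp)$ and $\int_\C \indic{x \in \inte(\gamma)}\,\d x = \area(\inte(\gamma))$, the identity becomes
\[
\int \d\loopm(\wp)\, F(\out(\wp))\, T(\wp) \;=\; \lambda_0 \int \d\nu_\SLE(\gamma)\, F(\gamma)\, \area(\inte(\gamma))
\]
for every measurable $F \geq 0$. Both sides are infinite for $F \equiv 1$, so I will insert the cutoff $F(\gamma) = \indic{c \leq \area(\inte(\gamma)) \leq d}\indic{0 \in \inte(\gamma)}$ for fixed $0 < c < d < \infty$; the indicator $\indic{0 \in \inte(\gamma)}$ breaks translation invariance and the area cutoff breaks scale invariance, which are precisely the two symmetries responsible for the infinite mass of $\loopm$.

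Next I will expand both sides via the bridge representation \eqref{E:loopm} together with Brownian scaling: under $\P^{t,z,z}$, the bridge has the distribution of $z + \sqrt{t}\,\wt\Pc$ with $\wt\Pc \sim \P^{1,0,0}$, so $\out(\Pc) = z + \sqrt{t}\,\wt\gamma$ and $\area(\inte(\out(\Pc))) = t\,\wt A$, where $\wt\gamma := \out(\wt\Pc)$ and $\wt A := \area(\inte(\wt\gamma))$. After the substitution $w = -z/\sqrt{t}$ (Jacobian $t$ in dimension two), the spatial integral produces $\int \indic{w \in \inte(\wt\gamma)}\,\d w = \wt A$ and the $t$-integral becomes $\int_{c/\wt A}^{d/\wt A}\d t = (d-c)/\wt A$. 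On the LHS these two random factors of $\wt A$ cancel, leaving the deterministic constant $(d-c)/(2\pi)$; on the RHS the extra factor $\area(\inte(\out(\Pc))) = t\,\wt A$ produces an additional $\wt A$, giving $\lambda_0 (d-c)\,\E[\wt A]/(2\pi)$. Equating yields $\lambda_0 \E[\wt A] = 1$, and the Garban--Trujillo Ferreras identity $\E[\wt A] = \pi/5$ completes the proof.

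The only conceptual subtlety is the bookkeeping around the infinite measure $\loopm$: one must verify that the $(c,d)$-cutoff renders both sides simultaneously finite, and that every Fubini exchange (the bridge disintegration, the $z$- and $t$-integrals, and the expectation over $\wt\Pc$) is legitimate. These checks are essentially routine once the cutoffs are fixed, so I do not anticipate a serious obstacle here; the genuine content of the argument is the observation that the scaling $\Pc \overset{d}{=} z + \sqrt{t}\,\wt\Pc$ combined with an area-level-set cutoff collapses the infinite loop measure to a clean multiple of $\E[\wt A]$.
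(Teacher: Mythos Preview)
Your proof is correct and reaches the same conclusion as the paper, but via a different and arguably cleaner choice of cutoff. The paper takes $F(\gamma) = \indic{\gamma \subset [0,1]^2,\ \diam(\gamma)>\eps}$, integrates over $x \in [0,1]^2$, and then analyses the $\eps \to 0$ asymptotics of both sides after dividing by $|\log\eps|$; this requires tail estimates on the diameter of a Brownian bridge and on the heat kernel in the square. Your cutoff $F(\gamma) = \indic{c \le \area(\inte(\gamma)) \le d}\indic{0 \in \inte(\gamma)}$ is tailored so that the two symmetries of $\loopm$ (translation and scaling) are broken in a way that matches the Jacobian of the change of variables $w = -z/\sqrt{t}$ exactly, which is why the random factors of $\wt A$ cancel on the LHS and leave a single $\E[\wt A]$ on the RHS with no limit required. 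The only inputs you need beyond Garban--Trujillo Ferreras are that $\wt A > 0$ a.s.\ (immediate, since the outer boundary is a Jordan curve bounding a nonempty open set) and that $\E[\wt A] < \infty$ (which is part of the very result you invoke, or follows from $\wt A \le \diam(\wt\Pc)^2$), so the Fubini checks you flag are indeed routine. In short: the paper's route is more hands-on and the student's route is slicker; both hinge on the same external input $\E[\wt A] = \pi/5$.
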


\begin{proof}
Recall that we denote by $\Gamma$ the space of self-avoiding loops; see Definition~\ref{Def:space_self-avoiding}.
Combining Lemmas \ref{L:1point_gamma} and \ref{L:1point_constant}, we obtain that, for any measurable function $F: \Gamma \to [0,\infty]$, the following measures in $\C$ agree
\begin{equation}\label{E:L_1point_loopmeasure}
\int \d \loopm(\wp) F(\out(\wp)) L_x(\wp) \d x
= \lambda_0 \int \d \nu_\SLE(\gamma) F(\gamma) \indic{x \in \inte(\gamma)} \d x,
\end{equation}
where the integrals are with respect to $\wp$ and $\gamma$ respectively.
This is a key identity. It both involves the constant $\lambda_0$ that we are after and contains only terms that we will be able to compute (or rather estimate asymptotically).

Let us denote by $Q = [0,1]^2$ and let $\eps >0$.
We apply the relation \eqref{E:L_1point_loopmeasure} to $F(\gamma) = \indic{\gamma \subset Q, \diam(\gamma)>\eps}$.  
Since for any loop $\wp$, $\int L_x(\wp) \d x = T(\wp)$, integrating the relation \eqref{E:L_1point_loopmeasure} over $x \in Q$ yields
\begin{equation}
\label{E:pf_lambda1}
    \int \loopm(\d \wp) \indic{\wp \subset Q, \diam(\wp)>\eps} T(\wp)
    = \lambda_0 \int \loopm(\d \wp) \indic{\wp \subset Q, \diam(\wp)>\eps} \area(\inte(\wp)).
\end{equation}
We are going to show that
\begin{equation}\label{E:pf_lambda3}
\lim_{\eps \to 0} \frac{1}{|\log \eps|} \int \loopm(\d \wp) \indic{\wp \subset Q, \diam(\wp)>\eps} T(\wp) = \frac{1}{\pi}
\end{equation}
and
\begin{equation}\label{E:pf_lambda4}
\lim_{\eps \to 0} \frac{1}{|\log \eps|} \int \loopm(\d \wp) \indic{\wp \subset Q, \diam(\wp)>\eps} \area(\inte(\wp)) = \frac{\pi}{5} \times \frac{1}{\pi}.
\end{equation}
Together with \eqref{E:pf_lambda1} this will imply that $\lambda_0 = 5/\pi$ as claimed. As already alluded to, the proof of \eqref{E:pf_lambda4} relies on a result of Garban and Trujillo Ferreras \cite{MR2217292} concerning the expected area of the domain delimited by a Brownian bridge.

Let us start with the proof of \eqref{E:pf_lambda3}.
We are going to show that we can essentially replace the conditions that $\wp \subset Q$ and $\diam(\wp) >\eps$ by the conditions that its root belongs to $Q$ and its duration belongs to $[\eps^2,1]$. By definition of $\loopm$ \eqref{E:loopm}, the resulting integral will be explicitly given by:
\begin{align*}
    \int_Q \d z \int_{\eps^2}^1 \frac{\d t}{t} \frac{1}{2\pi t} \times t = \frac1\pi |\log \eps|
\end{align*}
which is consistent with \eqref{E:pf_lambda3}. We now make this reasonning precise.
By definition of $\loopm$, the left hand side of \eqref{E:pf_lambda1} is equal to
\begin{equation}\label{E:pf_lambda2}
\int_0^\infty \d t \int_Q \d x \frac{1}{2\pi t} \P^{t,x,x}(\wp \subset Q, \diam(\wp)>\eps).
\end{equation}
If $t \geq 1$, we bound
\[
\frac{1}{2\pi t} \P^{t,x,x}(\wp \subset Q, \diam(\wp)>\eps) \leq \frac{1}{2\pi t} \P^{t,x,x}(\wp \subset Q) = p_Q(t,x,x)
\]
where $p_Q$ is the heat kernel in $Q$. Because $t \mapsto \sup_{x \in Q} p_Q(t,x,x)$ decays exponentially fast, this shows that the contribution of the integral from $1$ to $\infty$ is bounded, uniformly in $\eps$. If $t \in [\eps^2 |\log \eps|^{-2},1]$, then we simply bound the probability in \eqref{E:pf_lambda2} by 1, showing that the contribution of the integral for $t \in [\eps^2 |\log \eps|^{-2},1]$ is at most
\[
\int_{\eps^2 |\log \eps|^{-2}}^1 \frac{\d t}{2\pi t} = \frac1\pi (|\log \eps| + \log |\log \eps|).
\]
Finally, if $t \leq \eps^2 |\log \eps|^{-2}$, we bound the probability in \eqref{E:pf_lambda2} by $\P^{t,x,x}(\diam(\wp)>\eps)$. The $x$-projection and $y$-projection of a 2D Brownian bridge are two (independent) 1D Brownian bridges. If the 2D bridge has a diameter larger than $\eps$, then at least one of the two 1D bridges has a diameter larger than $\eps/\sqrt{2}$.
Moreover, by symmetry, the probability that the diameter of a 1D bridge exceeds $\eps/\sqrt{2}$ is at most twice the probability that its maximum exceeds $2^{-3/2} \eps$ (compared to the starting point). By the reflection principle, this latter probability equals $e^{-\eps^2/(4t)}$; see e.g. (3.40) in \cite{MR1121940}.
The contribution of the integral for $t \in [0,\eps^2 |\log \eps|^{-2}]$ is then at most
\[
\int_0^{\eps^2 |\log \eps|^{-2}} \frac{1}{2\pi t} \times 4 e^{-\frac{\eps^2}{4 t}} \d t
= \int_0^{|\log \eps|^{-2}} \frac{2}{\pi t} e^{-\frac{1}{4 t}} \d t
\]
which goes to 0 as $\eps \to 0$. Altogether, we have obtained the upper bound of \eqref{E:pf_lambda3}.
The lower bound can be obtained in a similar manner, showing \eqref{E:pf_lambda3}.

We now move to the proof of \eqref{E:pf_lambda4}. By definition of $\loopm$,
\begin{align}
\nonumber
    & \int \loopm(\d \wp) \indic{\wp \subset Q, \diam(\wp)>\eps} \area(\inte(\wp)) \\
    & = \int_0^\infty \frac{\d t}{t} \int_Q \d x ~p(t,x,x) \E^{x, x,t}[\area(\inte(\wp)) \indic{\wp \subset Q, \diam(\wp)>\eps}] \label{E:pf_lambda6}.
\end{align}
As before, one can argue that the contributions of the integral for $t \geq 1$ and for $t \leq \eps^2 |\log \eps|^{-2}$ are bounded, uniformly in $\eps$. To this end, one can for instance use Cauchy--Schwarz inequality and bound the expectation in \eqref{E:pf_lambda6} by
\[
\E^{t,x,x}[\area(\inte(\wp))^2]^{1/2} \P^{t,x,x}(\wp \subset Q, \diam(\wp) > \eps)^{1/2}.
\]
The area of $\inte(\wp)$ is well concentrated around $t$: $\E^{t,x,x}[\area(\inte(\wp))^2]^{1/2} \leq C t$.
This follows from similar argument as above, using 1D Brownian bridges. We then deal with the probability $\P^{t,x,x}(\wp \subset Q, \diam(\wp) > \eps)$ as in the proof of \eqref{E:pf_lambda3}. Notice in particular that the exponent 1/2 coming from Cauchy--Schwarz inequality does not affect the reasoning.
For $t \in [\eps^2 |\log \eps|^{-2},1]$, we bound the expectation in \eqref{E:pf_lambda6} by $\E^{x \to x,t}[\area(\inte(\wp))]$. By Brownian scaling, this is equal to $t$ times the area of the interior of a Brownian bridge of duration 1 which is equal to $\pi/5$; see \cite{MR2217292}.
We deduce that the integral for $t \in [\eps^2 |\log \eps|^{-2},1]$ is at most
\[
\frac{\pi}{5} \int_{\eps^2 |\log \eps|^{-2}}^1 \frac{\d t}{2\pi t} = \frac{\pi}{5} \frac{1}{\pi} (|\log \eps| + \log |\log \eps|).
\]
Putting things together leads to the upper bound of \eqref{E:pf_lambda4}. The lower bound is similar, concluding the proof.
\end{proof}

We can finish this section with the proof we were after:

\begin{proof}[Proof of Theorem \ref{T:level_line}, \eqref{E:T_level_line_expectation}]
    It is a consequence of Lemma \ref{L:1point}, Lemma~\ref{L:1point_constant} and Lemma \ref{L:lambda}.
\end{proof}

\subsection{Second moment - Proof of \texorpdfstring{\eqref{E:T_level_line_bc}}{(4.4)}}\label{SS:second_moment}

This section gathers the second moment computations needed in the proof of \eqref{E:T_level_line_bc}. It is the most technically challenging part of the article. This section is structured in a way as to reflect the main steps of the proof.

Recall that the two-point function $\corint{L_x(\Pc) L_y(\Pc)}$ is defined in Lemma \ref{L:2point}.

\begin{proposition}\label{P:upper_bound}
    There exists $C>0$ such that for all $x,y \in \D$,
    \[
    \corint{L_x(\Pc)L_y(\Pc)} \leq C \max(1, G_\D(x,y)).
    \]
\end{proposition}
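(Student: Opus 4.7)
The plan is to bypass the explicit formula \eqref{E:L_2point} and instead invoke the probabilistic identity \eqref{E:L_2point2} from Lemma \ref{L:2point}, reducing the bound to Corollary \ref{C:upper_bound_2point}, which was stated precisely for this purpose. Indeed, the right-hand side of \eqref{E:L_2point2} is a limit of $|\log r|^2$ times a two-point hitting probability for $\Pc$ under $\pint$, and Corollary \ref{C:upper_bound_2point} controls the analogous quantity under $\P^\wired_{\Hb,\R}$. The two are related by a conformal map.

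First, I would fix any conformal transformation $\varphi : \D \to \Hb$, set $z_1 = \varphi(x)$ and $z_2 = \varphi(y)$, and use Proposition \ref{prop:decomp} together with the definition of $\P^\wired_{\Hb,\R}$ to transfer the event from $\pint$ to $\P^\wired_{\Hb,\R}$:
\[
\pint(\Pc \cap B(x,r) \neq \varnothing,\, \Pc \cap B(y,r) \neq \varnothing) = \P^\wired_{\Hb,\R}(\Pc \cap \varphi(B(x,r)) \neq \varnothing,\, \Pc \cap \varphi(B(y,r)) \neq \varnothing).
\]
By conformal differentiability of $\varphi$ at $x$ and $y$, for $r$ small enough one has $\varphi(B(x,r)) \subset B(z_1, C_\varphi r)$ and $\varphi(B(y,r)) \subset B(z_2, C_\varphi r)$, with $C_\varphi := 2\max(|\varphi'(x)|,|\varphi'(y)|) < \infty$. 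Moreover, under $\P^\wired_{\Hb,\R}$, $\Pc$ is the union of the excursions in $\Ec_\R$ (Proposition \ref{prop:pp}), so the event $\{\Pc \cap A \neq \varnothing,\, \Pc \cap A' \neq \varnothing\}$ coincides with $\{\exists\, e, e' \in \Ec_\R,\, e \cap A \neq \varnothing,\, e' \cap A' \neq \varnothing\}$, where the case $e = e'$ is allowed.

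Next, I would apply Corollary \ref{C:upper_bound_2point} after the change of variable $r' = C_\varphi r$, using that $|\log r|^2 / |\log r'|^2 \to 1$ as $r \to 0$ for fixed $C_\varphi$. This yields
\[
\limsup_{r \to 0} |\log r|^2\, \pint(\Pc \cap B(x,r) \neq \varnothing,\, \Pc \cap B(y,r) \neq \varnothing) \leq C \max(1, G_\Hb(z_1, z_2)) = C \max(1, G_\D(x,y)),
\]
where the last equality is conformal invariance of the Green's function \eqref{E:Green_conformal}. Combined with \eqref{E:L_2point2}, this gives the desired bound for $x \neq y$; the case $x = y$ is vacuous since $G_\D(x,x) = \infty$ makes the right-hand side infinite.

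The only (minor) delicacy is uniformity of the constant $C$ in $x$ and $y$. This comes for free from the fact that the constant in Corollary \ref{C:upper_bound_2point} is universal: the conformal map $\varphi$ enters only through the prefactor $C_\varphi$, which affects the radius of the balls but not the leading $|\log r|^2$ asymptotics.
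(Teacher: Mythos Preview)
Your proposal is correct and follows exactly the route the paper takes: the paper's proof is the one-liner ``It follows from Lemma \ref{L:2point}, Corollary \ref{C:upper_bound_2point} and conformal invariance,'' and you have simply spelled out those three ingredients in detail. The only minor imprecision is citing Proposition \ref{prop:pp} for the excursion decomposition of $\Ec_\R$; that decomposition is in fact inherited directly from $\Ec_{\partial \D}$ via the conformal map defining $\P^\wired_{\Hb,\R}$, but this does not affect the argument.
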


\begin{proof}
    It follows from Lemma \ref{L:2point}, Corollary \ref{C:upper_bound_2point} and conformal invariance.
\end{proof}

In the proof of \eqref{E:T_level_line_bc}, we will make use of the following notational convention:

\begin{notation}\label{N:arc}
    If $z, w \in \partial \D$ are two points of the unit circle, we will denote by $(z,w)$ (resp. $[z,w]$) the counterclockwise boundary arc from $z$ to $w$, excluding the points $z$ and $w$ (resp. including the points $z$ and $w$).
\end{notation}

\begin{proof}[Proof of Theorem \ref{T:level_line}, \eqref{E:T_level_line_bc}]
Let $(f_\eps)_\eps$ be a sequence of test functions satisfying Assumption \ref{assumption_feps} and denote by
\[
I_\eps = \int_\D f_\eps(x) L_x(\Pc) \d x.
\]
We want to show that $I_\eps \to 5/\pi$ in L$^1(\pint)$. As before, we will denote by $\Ec_{\partial \D}$ the associated collection of excursions in $\D$ with endpoints on $\partial \D$. We start by introducing a ``good'' event.

\medskip

\noindent\textbf{Good event $G_\delta(z)$.}
Let $\delta >0$ and $z \in \partial \D$.
Let
\begin{equation}
    \label{E:z-z+}
    z_- = z_-(\delta) = e^{-i \delta/2} z
    \quad \text{and} \quad
    z_+ = z_+(\delta) = e^{i \delta/2} z.
\end{equation}
Recalling Notation \ref{N:arc}, we define the following event
\begin{align}
    \label{E:def_good}
    G_\delta(z) & := \{ \forall e \in \Ec_{\partial \D}, a(e) \notin (z_-,z_+) \text{ or } b(e) \notin (z_-,z_+) \implies e \cap B(z,\delta^{100}) = \varnothing \} \\
    & \hspace{20pt} \cap \{ \forall e \in \Ec_{\partial \D} \text{ with } a(e) \in (z_-,z_+) \text{ or } b(e) \in (z_-,z_+), \diam(e) < \delta^{1/10} \} \notag
\end{align}
and the slightly more restrictive version
\begin{align}
    \label{E:def_good'}
    G_\delta'(z) & := \{ \forall e \in \Ec_{\partial \D}, a(e) \notin (e^{-i\delta/4}z,e^{i\delta/4} z) \text{ or } b(e) \notin (e^{-i\delta/4}z,e^{i\delta/4} z) \implies e \cap B(z,2\delta^{100}) = \varnothing \} \\
    & \hspace{20pt} \cap \{ \forall e \in \Ec_{\partial \D} \text{ with } a(e) \in (e^{-i \delta}z,e^{i \delta}z) \text{ or } b(e) \in (e^{-i \delta}z,e^{i \delta}z), \diam(e) < \delta^{1/10}/2 \}. \notag
\end{align}
See Figure \ref{Figure:good_event} for an illustration of the event $G_\delta(z)$.
Denote by
\[
I_\eps^{\mathrm{g}} := \int_\D f_\eps(x) L_x(\Pc) \mathbf{1}_{G_\delta'(x/|x|)} \d x
\]
where the superscript ``g'' stands for ``good''. $I_\eps^{\mathrm{g}}$ implicitly depends on $\delta$.

\begin{figure}
   \centering
   \begin{subfigure}[t]{.3\columnwidth}
    \def\svgwidth{\columnwidth}
\begingroup%
  \makeatletter%
  \providecommand\color[2][]{%
    \errmessage{(Inkscape) Color is used for the text in Inkscape, but the package 'color.sty' is not loaded}%
    \renewcommand\color[2][]{}%
  }%
  \providecommand\transparent[1]{%
    \errmessage{(Inkscape) Transparency is used (non-zero) for the text in Inkscape, but the package 'transparent.sty' is not loaded}%
    \renewcommand\transparent[1]{}%
  }%
  \providecommand\rotatebox[2]{#2}%
  \newcommand*\fsize{\dimexpr\f@size pt\relax}%
  \newcommand*\lineheight[1]{\fontsize{\fsize}{#1\fsize}\selectfont}%
  \ifx\svgwidth\undefined%
    \setlength{\unitlength}{176.7386126bp}%
    \ifx\svgscale\undefined%
      \relax%
    \else%
      \setlength{\unitlength}{\unitlength * \real{\svgscale}}%
    \fi%
  \else%
    \setlength{\unitlength}{\svgwidth}%
  \fi%
  \global\let\svgwidth\undefined%
  \global\let\svgscale\undefined%
  \makeatother%
  \begin{picture}(1,1.06156812)%
    \lineheight{1}%
    \setlength\tabcolsep{0pt}%
    \put(0,0){\includegraphics[width=\unitlength,page=1]{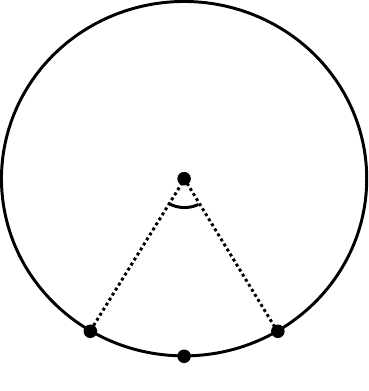}}%
    \put(0.47356657,0.43190962){\color[rgb]{0,0,0}\makebox(0,0)[lt]{\lineheight{1.25}\smash{\begin{tabular}[t]{l}$\delta$\end{tabular}}}}%
    \put(0.77301846,0.11852592){\color[rgb]{0,0,0}\makebox(0,0)[lt]{\lineheight{1.25}\smash{\begin{tabular}[t]{l}$z_+$\end{tabular}}}}%
    \put(0.18772616,0.10021721){\color[rgb]{0,0,0}\makebox(0,0)[lt]{\lineheight{1.25}\smash{\begin{tabular}[t]{l}$z_-$\end{tabular}}}}%
    \put(0.45301602,0.03034512){\color[rgb]{0,0,0}\makebox(0,0)[lt]{\lineheight{1.25}\smash{\begin{tabular}[t]{l}$z$\end{tabular}}}}%
    \put(0.23683414,0.91540964){\color[rgb]{0,0,0}\makebox(0,0)[lt]{\lineheight{1.25}\smash{\begin{tabular}[t]{l}$\D$\end{tabular}}}}%
    \put(0,0){\includegraphics[width=\unitlength,page=2]{fig1.pdf}}%
    \put(0.53171558,0.00488377){\color[rgb]{0,0,0}\makebox(0,0)[lt]{\lineheight{1.25}\smash{\begin{tabular}[t]{l}$B(z,\delta^{100})$\end{tabular}}}}%
    \put(0,0){\includegraphics[width=\unitlength,page=3]{fig1.pdf}}%
  \end{picture}%
\endgroup%

   \end{subfigure}
   \hspace{40pt}
   \raisebox{5pt}{
   \begin{subfigure}[t]{.3\columnwidth}
    \def\svgwidth{\columnwidth}
\begingroup%
  \makeatletter%
  \providecommand\color[2][]{%
    \errmessage{(Inkscape) Color is used for the text in Inkscape, but the package 'color.sty' is not loaded}%
    \renewcommand\color[2][]{}%
  }%
  \providecommand\transparent[1]{%
    \errmessage{(Inkscape) Transparency is used (non-zero) for the text in Inkscape, but the package 'transparent.sty' is not loaded}%
    \renewcommand\transparent[1]{}%
  }%
  \providecommand\rotatebox[2]{#2}%
  \newcommand*\fsize{\dimexpr\f@size pt\relax}%
  \newcommand*\lineheight[1]{\fontsize{\fsize}{#1\fsize}\selectfont}%
  \ifx\svgwidth\undefined%
    \setlength{\unitlength}{176.73862678bp}%
    \ifx\svgscale\undefined%
      \relax%
    \else%
      \setlength{\unitlength}{\unitlength * \real{\svgscale}}%
    \fi%
  \else%
    \setlength{\unitlength}{\svgwidth}%
  \fi%
  \global\let\svgwidth\undefined%
  \global\let\svgscale\undefined%
  \makeatother%
  \begin{picture}(1,1.03554316)%
    \lineheight{1}%
    \setlength\tabcolsep{0pt}%
    \put(0,0){\includegraphics[width=\unitlength,page=1]{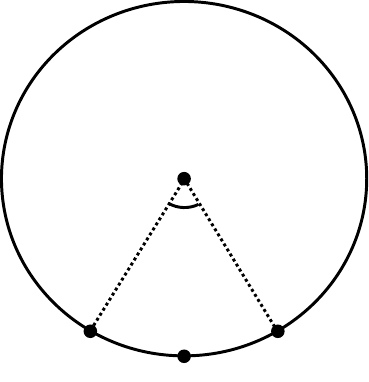}}%
    \put(0.47356654,0.40588468){\color[rgb]{0,0,0}\makebox(0,0)[lt]{\lineheight{1.25}\smash{\begin{tabular}[t]{l}$\delta$\end{tabular}}}}%
    \put(0.77301853,0.09250105){\color[rgb]{0,0,0}\makebox(0,0)[lt]{\lineheight{1.25}\smash{\begin{tabular}[t]{l}$z_+$\end{tabular}}}}%
    \put(0.18772621,0.07419231){\color[rgb]{0,0,0}\makebox(0,0)[lt]{\lineheight{1.25}\smash{\begin{tabular}[t]{l}$z_-$\end{tabular}}}}%
    \put(0.45301608,0.00432017){\color[rgb]{0,0,0}\makebox(0,0)[lt]{\lineheight{1.25}\smash{\begin{tabular}[t]{l}$z$\end{tabular}}}}%
    \put(0.23683425,0.88938478){\color[rgb]{0,0,0}\makebox(0,0)[lt]{\lineheight{1.25}\smash{\begin{tabular}[t]{l}$\D$\end{tabular}}}}%
    \put(0,0){\includegraphics[width=\unitlength,page=2]{fig2.pdf}}%
    \put(0.26875482,0.41691828){\color[rgb]{1,0,0}\makebox(0,0)[lt]{\lineheight{1.25}\smash{\begin{tabular}[t]{l}$\delta^{\frac{1}{100}}$\end{tabular}}}}%
  \end{picture}%
\endgroup%

   \end{subfigure}
   }
\caption{Illustration of the two scenarios forbidden by the good event $G_\delta(z)$. On the left, a blue loop with one end point outside of $(z_-,z_+)$ hits the ball $B(z,\delta^{100})$ depicted in yellow. On the right, a blue loop with one endpoint in $(z_-,z_+)$ has a diameter exceeding $\delta^{1/100}$.}\label{Figure:good_event}
\end{figure}

We now state two key intermediate lemmas. We will then show that Theorem \ref{T:level_line} follows from these two results (this step will be elementary) before returning to their proofs.

\begin{lemma}\label{L:first_good}
The random variable $I_\eps - I_\eps^{\mathrm{g}}$ is small in L$^1$ in the sense that:
\begin{equation}
    \label{E:L_first_good}
\lim_{\delta \to 0} \limsup_{\eps \to 0} \Eint[I_\eps - I_\eps^{\mathrm{g}}] = 0.
\end{equation}
\end{lemma}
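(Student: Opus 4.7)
The plan is to reduce the $L^1$-bound to a pointwise estimate on the conditional first moment $\Eint[L_x(\Pc)\mathbf{1}_{G_\delta'(z_x)^c}]$, and then to establish the latter by a union bound over a witness excursion combined with the conditional-independence structure of Proposition~\ref{prop:pp}. Since $I_\eps - I_\eps^{\mathrm{g}} \ge 0$, Fubini gives
\[
\Eint[I_\eps - I_\eps^{\mathrm{g}}] \;=\; \int_\D f_\eps(x)\,\Eint\bigl[L_x(\Pc)\mathbf{1}_{G_\delta'(x/|x|)^c}\bigr]\,\d x,
\]
and because $\int f_\eps = 1$ it is enough to prove that the integrand, regarded as a function of $x$, tends to $0$ uniformly on the thin annulus $\{\d(x,\partial\D) < \eps\}$, first as $\eps \to 0$ and then $\delta \to 0$.

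Fix such an $x$ and write $z := x/|x|$. Decompose $G_\delta'(z)^c \subset A_1(z) \cup A_2(z)$, where $A_1$ is the ``an excursion with at least one endpoint outside $(e^{-i\delta/4}z,e^{i\delta/4}z)$ visits $B(z,2\delta^{100})$'' event and $A_2$ is the ``an excursion with an endpoint in $(e^{-i\delta}z,e^{i\delta}z)$ has diameter $\geq \delta^{1/10}/2$'' event. For each $j \in \{1,2\}$, apply the union bound $\mathbf{1}_{A_j(z)} \le \sum_{\tilde e \in \Ec_{\partial\D}} \mathbf{1}_{W_j(\tilde e;z)}$, where $W_j(\tilde e;z)$ is the event that $\tilde e$ individually witnesses $A_j$, and split $L_x(\Pc) = L_x(\tilde e) + \sum_{e\neq \tilde e} L_x(e)$ to get
\[
\Eint\bigl[L_x(\Pc)\mathbf{1}_{A_j(z)}\bigr] \;\le\; \underbrace{\sum_{\tilde e} \Eint\bigl[L_x(\tilde e)\mathbf{1}_{W_j(\tilde e;z)}\bigr]}_{\text{self term}} \;+\; \underbrace{\sum_{\tilde e} \Eint\Bigl[\sum_{e\neq \tilde e} L_x(e)\,\mathbf{1}_{W_j(\tilde e;z)}\Bigr]}_{\text{cross term}}.
\]

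The self term is a single-excursion computation: conditioning on $(a(\tilde e), b(\tilde e))$, the conditional expectation $\E[L_x(\tilde e)\mathbf{1}_{W_j(\tilde e;z)} \mid a(\tilde e), b(\tilde e)]$ is bounded by classical Brownian-excursion estimates. For $A_1$, the Poisson-kernel factor $H_\D(x,\cdot)$ evaluated at a point at distance at least $c\delta$ from $z$ is of order $\eps/\delta^2$ (since $|x-z|=\eps \to 0$), yielding a factor vanishing with $\eps$ for each fixed $\delta$; for $A_2$, the probability that a Brownian excursion between endpoints within $2\delta$ of each other reaches diameter $\ge \delta^{1/10}/2$ is polynomially small in $\delta$, which, combined with the a priori local-time contribution, gives a bound that vanishes as $\delta \to 0$. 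Summing over candidate witnesses uses the Campbell-type sum estimate of Corollary~\ref{C:interval_excursion}.

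For the cross term we use the conditional-independence-given-endpoints structure of Proposition~\ref{prop:pp}: letting $\mathcal{N}$ be the endpoint configuration of $\Ec_{\partial\D}$, the event $W_j(\tilde e;z)$ depends on $\mathcal{N}$ and on the path of $\tilde e$ alone, and factorises from $\sum_{e\neq \tilde e} L_x(e)$ conditionally on $\mathcal{N}$. Using Lemma~\ref{L:1point} to rewrite $\E[L_x(e)\mid \mathcal{N}] = \tfrac{1}{\pi}H_\D(x,a(e))H_\D(x,b(e))/H_\D(a(e),b(e))$, the cross term becomes a pair-sum over $(e,\tilde e)$ of Poisson-kernel products weighted by the (small) witness probability $\P[W_j(\tilde e;z)\mid \mathcal{N}]$; the resulting expectation is exactly of the shape bounded by the one-sided-restriction identities of Corollaries~\ref{C:JLQ}--\ref{C:interval_excursion}. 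The main obstacle is precisely this cross term: the non-Poissonian nature of $\Ec_{\partial\D}$ rules out Slivnyak--Mecke, so one must rely on the conditional-independence-given-endpoints identity together with the pair-sum estimates from~\cite{JLQ23b}; the very generous exponents $\delta^{100}$ and $\delta^{1/10}$ in the definition of $G_\delta'$ leave ample polynomial room to absorb the $|\log \delta|$-type losses that arise when integrating the Poisson kernel against these pair sums, so that after summing and taking $\eps \to 0$ then $\delta \to 0$ both contributions vanish.
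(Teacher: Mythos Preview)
Your proposal is correct and follows essentially the same scheme as the paper's proof: Fubini reduction to a pointwise bound on $\Eint[L_x(\Pc)\mathbf 1_{G_\delta'(z)^c}]$, splitting the complement into the two bad events, a union bound over a witness excursion $\tilde e$, the self/cross decomposition $L_x(\Pc)=L_x(\tilde e)+\sum_{e\neq\tilde e}L_x(e)$, and control of the resulting pair sums via the one-sided-restriction estimates (Corollaries~\ref{C:JLQ}--\ref{C:interval_excursion}).

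Two points where the paper is sharper than your sketch and where you would need to do more work. First, the paper performs the whole computation after a conformal map $\D\to\Hb$ sending $z\mapsto\infty$; this turns the small arc around $z$ into the complement of a bounded interval and makes the dyadic decomposition and the Corollary~\ref{C:interval_excursion} bounds directly applicable. Second, for the $A_2$ self term the paper does not use a diameter probability but replaces ``$\mathrm{diam}(\tilde e)\ge\delta^{1/10}/2$'' by the monotone event ``$\tilde e$ crosses a fixed hyperbolic geodesic $C_\delta$'', then splits $\tilde e$ at the crossing time; this yields the explicit integrals \eqref{E:pf_good4}--\eqref{E:pf_good5}, which are shown to vanish as $\delta\to 0$ via a harmonic-measure/Green-function argument. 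Your $A_2$ self-term heuristic (``small probability of large diameter for an excursion with both endpoints within $2\delta$'') only covers the case where the second endpoint is also near $z$; when the second endpoint is far, the diameter event is automatic and you must instead use the smallness of $H_\D(x,\cdot)$ at that endpoint---but then the sum over the near endpoint is not obviously finite without the conformal-map/dyadic machinery. Likewise your $A_1$ self-term claim that one $H_\D(x,\cdot)$ factor of order $\eps/\delta^2$ already makes the contribution vanish is not complete, since the companion factor $H_\D(x,b)/H_\D(a,b)$ can be of order $\eps^{-1}$ when $b$ is close to $z$. These are exactly the cases handled by the paper's hitting-time split and dyadic sums, so the outline is sound but the self-term part needs that extra input.
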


\begin{lemma}\label{L:second_good}
For all $\delta>0$ fixed,
\[
\limsup_{\eps \to 0} \Eint \Big[ \int_{\D \times \D} L_x(\Pc)L_y(\Pc) \mathbf{1}_{G_\delta'(x/|x|) \cap G_\delta'(y/|y|)} f_\eps(x) f_\eps(y) \indic{|x-y|>\delta^{1/100}} \d x \d y \Big] \leq \Big( \frac{5}{\pi} \Big)^2.
\]
\end{lemma}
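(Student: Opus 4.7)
The strategy is to bound the second moment by exploiting (i) the conditional independence of excursions given their endpoints (Proposition~\ref{prop:pp}), (ii) the partial exploration of Section~\ref{S:conditioning}, and (iii) the first-moment identity $\Eint[L_x(\Pc)] = 5/\pi$ from Lemmas~\ref{L:1point} and~\ref{L:lambda}. Throughout, set $z_x := x/|x|$, $z_y := y/|y|$, write $A_x := (e^{-i\delta/4}z_x, e^{i\delta/4}z_x)$ and $A_y$ similarly, and let $\Ec_{A_x} \subset \Ec_{\partial \D}$ (resp.\ $\Ec_{A_y}$) denote the excursions of $\Pc$ with both endpoints in $A_x$ (resp.\ $A_y$).

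\textbf{Step 1 (Diagonal vanishes).} Decompose $L_x(\Pc) L_y(\Pc) = \sum_e L_x(e) L_y(e) + \sum_{e \neq e'} L_x(e) L_y(e')$. On $G_\delta'(z_x) \cap G_\delta'(z_y)$ with $\eps \leq \delta^{100}$, any excursion $e$ with $L_x(e) > 0$ must have both endpoints in $A_x$ (so $e \in \Ec_{A_x}$) and diameter less than $\delta^{1/10}/2$, hence $e \subset B(z_x, \delta^{1/10})$. Since $|x - y| > \delta^{1/100} \gg \delta^{1/10}$ for $\delta$ small, such an excursion cannot reach $B(y, \eps)$, and the same-excursion (diagonal) contribution vanishes on the event under consideration. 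Similarly, for $\delta$ small one has $A_x \cap A_y = \varnothing$, so $\Ec_{A_x}$ and $\Ec_{A_y}$ are automatically disjoint and the constraint $e \neq e'$ is automatic.

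\textbf{Step 2 (Conditional independence).} Bounding $\mathbf{1}_G \leq 1$ and applying Proposition~\ref{prop:pp}, which states that the excursions are conditionally independent Brownian excursions given their endpoints, one obtains
\[
\Eint\bigl[L_x L_y \mathbf{1}_G\bigr] \;\leq\; \Eint\Bigl[\sum_{e \in \Ec_{A_x},\, e' \in \Ec_{A_y}} m_x(e)\, m_y(e')\Bigr],
\]
where $m_x(e) := \pi^{-1} H_\D(x, a(e)) H_\D(x, b(e)) / H_\D(a(e), b(e))$ depends only on the endpoints of $e$ (see the proof of Lemma~\ref{L:1point}).

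\textbf{Step 3 (Decoupling via partial exploration).} It remains to show that, after integration against $f_\eps(x) f_\eps(y) \mathbf{1}_{|x-y| > \delta^{1/100}}$, the above upper bound is asymptotically at most $(5/\pi)^2$. The plan is to apply the partial-exploration construction of Section~\ref{S:conditioning} to both neighborhoods simultaneously: by exploring the loop's outer boundary from a reference point in $\partial \D \setminus (A_x \cup A_y)$ until it first enters either arc, and then iterating, one uses Lemma~\ref{lem:wired} together with the one-sided conformal restriction of parameter $\alpha = 5/8$ to show that, conditionally on the exploration, the excursions in $\Ec_{A_x}$ and those in $\Ec_{A_y}$ are independent and respectively distributed as wired loops on $A_x$ and $A_y$. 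Once this decoupling is in place, the integrated double sum factorises into a product of terms, each bounded by $\Eint[\int f_\eps(x) L_x \,\d x] = 5/\pi$, giving the target $(5/\pi)^2$. The errors arising from the exploration itself, and from pairs of excursions that straddle the two arcs, are quantitatively controlled by the moment identities of Corollary~\ref{C:JLQ} and the two-point estimate of Corollary~\ref{C:upper_bound_2point}, together with the interval bound of Corollary~\ref{C:interval_excursion}.

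\textbf{Main obstacle.} The delicate point is Step 3: rendering the two-arc decoupling rigorous. Since the outer boundary of $\Pc$ is globally $\partial \D$, the exploration must be organised to reveal the "connecting" excursions (those with one endpoint outside $A_x$ or outside $A_y$) while preserving the wired structure on each arc independently. The moment identities of Corollary~\ref{C:JLQ}, which encode the restriction exponent $\alpha = 5/8$, are expected to be the quantitative backbone of the argument, with Corollaries~\ref{C:interval_excursion} and~\ref{C:upper_bound_2point} supplying the error bounds needed to absorb the finite-$\eps$ corrections.
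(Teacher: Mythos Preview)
Your Steps~1 and~2 are reasonable reductions, but Step~3 is where the substance lies, and the mechanism you sketch does not match the paper and is not made to work. You propose to explore the outer boundary of $\Pc$ (already fully wired on $\partial\D$) so that, conditionally on the exploration, the excursions in $\Ec_{A_x}$ and in $\Ec_{A_y}$ become \emph{independent} wired loops on the two arcs. The partial exploration of Section~\ref{S:conditioning}, however, starts from an \emph{unwired} bubble and produces a single wired interval; it does not furnish a two-arc factorisation of $\pint$, and no such statement is available. Once you drop $\mathbf{1}_{G'}$ in Step~2 you are left with bounding $\E\bigl[\sum_{e\in\Ec_{A_x}} m_x(e)\cdot\sum_{e'\in\Ec_{A_y}} m_y(e')\bigr]$, a correlation question purely about the \emph{endpoint} process, for which Proposition~\ref{prop:pp} says nothing and Corollaries~\ref{C:JLQ}--\ref{C:upper_bound_2point} give only upper bounds, not the sharp constant $(5/\pi)^2$.

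The paper's decoupling is quite different. It passes via \eqref{E:important} to $\P^\wired_{\Hb,[-1,1]}$ with the conformal map chosen so that $w$ is sent to the unique highest point $X+iY$ of the outer boundary $\gamma$. On the good events there is a single excursion $e_{\max}$ of maximal height which (by the inclusions \eqref{E:pf_good_inclusion1}--\eqref{E:pf_good_inclusion2}) is the \emph{only} excursion contributing to $L_{\varphi_\gamma(w_\eps)}$ and does \emph{not} contribute to $L_{\varphi_\gamma(z_\eps)}$. The decoupling is then obtained not by an exploration, but by a \emph{change of measure}: using the explicit strip Poisson kernels \eqref{E:Poisson_strip1}--\eqref{E:Poisson_strip4}, the paper resamples the first-entrance/last-exit points $(U,V)$ of $e_{\max}$ into a thin strip near the top so that they become i.i.d.\ and independent of $\Fc=\sigma(\Ec_{[-1,1]}\setminus\{e_{\max}\})$, with Radon--Nikodym derivative in $[(1+\eta)^{-1},1+\eta]$ on the event $E_2(\gamma)$; see \eqref{E:pf_RN1}--\eqref{E:pf_RN}. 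Under the tilted law the top part $e_{\max}^2\wedge e_{\max}^3$ is independent of $\Fc$, so the product factorises and each factor is bounded by $5/\pi$ via \eqref{E:T_level_line_expectation}. This asymmetric ``one special excursion'' argument, together with the explicit change of measure, is the missing idea in your proposal.
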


We assume that Lemmas \ref{L:first_good} and \ref{L:second_good} hold and conclude the proof of Theorem \ref{T:level_line}.
Let $\delta >0$. By the triangle inequality and Cauchy--Schwarz, we have
\[
\Eint [|I_\eps - 5/\pi|] \leq 
\Eint [|I_\eps^{\mathrm{g}} - 5/\pi|^2]^{1/2} + \Eint[I_\eps - I_\eps^{\mathrm{g}}].
\]
The second right hand side term is handled by Lemma \ref{L:first_good}. For the first term, we expand the square and, recalling that $\E I_\eps = 5/\pi$ (Theorem \ref{T:level_line}, \eqref{E:T_level_line_expectation}), we get that
\begin{align*}
& \Eint [|I_\eps^{\mathrm{g}} - 5/\pi|^2]
= \Eint[(I_\eps^{\mathrm{g}})^2] - (5/\pi)^2 + (10/\pi) \Eint[ I_\eps - I_\eps^{\mathrm{g}}].
\end{align*}
By subadditivity of $u \mapsto \sqrt{u}$, this shows that
\[
\Eint [|I_\eps - 5/\pi|] \leq 
(\Eint [(I_\eps^{\mathrm{g}})^2] - (5/\pi)^2)_+^{1/2} + (10/\pi)^{1/2} \Eint[I_\eps - I_\eps^{\mathrm{g}}]^{1/2} + \Eint[I_\eps - I_\eps^{\mathrm{g}}],
\]
where $(u)_+ = \max(u,0)$.
We further expand
\begin{align*}
\Eint[(I_\eps^{\mathrm{g}})^2] & = \Eint\Big[ \int_{\D \times \D} L_x(\Pc)L_y(\Pc) \mathbf{1}_{G_\delta'(x/|x|) \cap G_\delta'(y/|y|)} f_\eps(x) f_\eps(y) \indic{|x-y|>\delta^{1/100}} \d x \d y \Big] \\
& + \Eint\Big[ \int_{\D \times \D} L_x(\Pc)L_y(\Pc) \mathbf{1}_{G_\delta'(x/|x|) \cap G_\delta'(y/|y|)} f_\eps(x) f_\eps(y) \indic{|x-y|\leq\delta^{1/100}} \d x \d y \Big]. 
\end{align*}
By Lemma \ref{L:second_good}, the limsup as $\eps \to 0$ of the first right hand side term is at most $(5/\pi)^2$. Putting things together, this shows that
\begin{align*}
& \limsup_{\eps \to 0} \Eint [|I_\eps - 5/\pi|] \leq 
\limsup_{\eps \to 0} ((10/\pi)^{1/2} \Eint[I_\eps - I_\eps^{\mathrm{g}}]^{1/2} + \Eint[I_\eps - I_\eps^{\mathrm{g}}])\\
& \hspace{60pt} + \limsup_{\eps \to 0} \Big( \int_{\D \times \D} \corint{L_x(\Pc)L_y(\Pc)} f_\eps(x) f_\eps(y) \indic{|x-y|\leq\delta^{1/100}} \d x \d y \Big)^{1/2}.
\end{align*}
The left hand side is independent of $\delta$. On the other hand, the first right hand side limsup vanishes as $\delta \to 0$ by Lemma \ref{L:first_good}, whereas the second right hand side limsup vanishes by Proposition \ref{P:upper_bound} and the assumption \eqref{E:assumption_feps2} on $(f_\eps)_\eps$. This concludes the proof that $I_\eps \to 5/\pi$ in L$^1(\pint)$.
To finish the proof of Theorem \ref{T:level_line}, it remains to prove Lemmas \ref{L:first_good} and \ref{L:second_good}.

\begin{proof}[Proof of Lemma \ref{L:first_good}]
Let $0<\eps<\delta$. The ``correlation functions'' written below are justified by Lemma \ref{L:1point}. We have
\begin{align*}
    \Eint[I_\eps - I_\eps^{\mathrm{g}}]
    = \int _\D f_\eps(x) \corint{L_x(\Pc) \mathbf{1}_{G_\delta'(x/|x|)^c}} \d x.
\end{align*}
We are going to show that
\begin{equation}
    \label{E:pf_good_goal}
    \sup_{\substack{x \in \D\\|x|>1-\delta}} \corint{L_x(\Pc) \mathbf{1}_{G_\delta'(x/|x|)^c}} \to 0
    \quad \text{as} \quad \delta \to 0.
\end{equation}
Since $f_\eps(x)$ vanishes for $|x| \leq 1-\eps$ and since $\int f_\eps =1$, this will prove \eqref{E:L_first_good}.
Let $x \in \D$ with $|x|>1-\delta$. Let $G_1$ and $G_2$ be the first and second events appearing on the right hand side of \eqref{E:def_good'} with $z = x/|x|$. We will show separately that
\begin{equation}
    \label{E:pf_good_goal2}
    \corint{L_x(\Pc) \mathbf{1}_{G_1^c}} \to 0
    \quad \text{as} \quad \delta \to 0,
\end{equation}
and
\begin{equation}
    \label{E:pf_good_goal3}
    \corint{L_x(\Pc) \mathbf{1}_{G_2^c}} \to 0
    \quad \text{as} \quad \delta \to 0,
\end{equation}
uniformly in $|x| >1-\delta.$
We start with \eqref{E:pf_good_goal3}.

\begin{figure}
   \centering
   \begin{subfigure}{.8\columnwidth}
    \def\svgwidth{\columnwidth}
   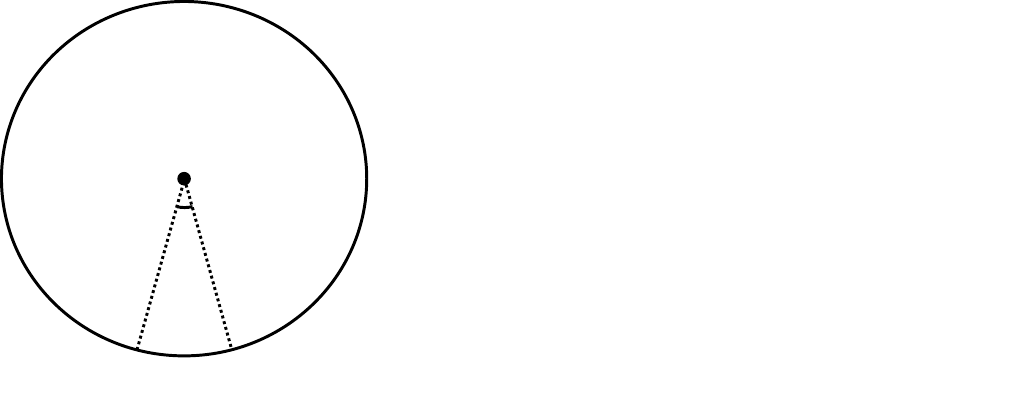
   \end{subfigure}
\caption{Illustration of notations and an event appearing in the proof of Lemma \ref{L:first_good}. On the left, an excursion with an endpoint in $(e^{-i\delta} x/|x|,e^{+i\delta} x/|x|)$ has a diameter at least $\delta^{1/10}/2$ and thus intersects $C_\delta$. The right picture is the image of the left picture under the conformal map sending $x/|x|$ to infinity, $w_1$ to $1$ and $-x/|x|$ to $0$. The points $e^{\pm i\delta} x/|x|$ are then mapped to $\mp R$, $w_2$ to $-1$ and $x$ to $z$.}\label{fig:first_moment}
\end{figure}

Consider the two boundary points $w_1, w_2$ that are at distance $\delta^{1/10}/100$ to $x/|x|$ and let $C_\delta$ be the hyperbolic geodesic between these two points; see Figure \ref{fig:first_moment}.
If an excursion $e$ with at least one endpoint in $(e^{-i\delta} x/|x|,e^{+i\delta} x/|x|)$ has a diameter at least $\delta^{1/10}/2$, it has to intersect $C_\delta$. Hence,
\begin{align}
    \notag
\corint{L_x(\Pc) \mathbf{1}_{G_2^c}}
& \leq \corint{L_x(\Pc) \indic{\exists e \in \Ec_{\partial \D}, a(e) \text{ or } b(e) \in (e^{-i\delta} x/|x|,e^{+i\delta} x/|x|), e \cap C_\delta \neq \varnothing}} \\
\label{E:pf_good1}
& \leq 2 \corint{L_x(\Pc) \indic{\exists e \in \Ec_{\partial \D}, a(e) \in (e^{-i\delta} x/|x|,e^{+i\delta} x/|x|), e \cap C_\delta \neq \varnothing}}.
\end{align}
We now map the unit disc $\D$ to the upper half plane $\Hb$, send $x/|x|$ to $\infty$, $-x/|x|$ to 0 and $w_1$ to 1. By symmetry $w_2$ is sent to $-1$ and $C_\delta$ to the hyperbolic geodesic between 1 and $-1$ which is simply $\partial B(0,1) \cap \Hb$. Moreover, the arc $(e^{-i\delta} x/|x|,e^{+i\delta} x/|x|)$ is sent to $(-\infty, -R) \cup (R, \infty)$ where $R = R(\delta) \to +\infty$ as $\delta \to 0$. We will denote by $z$ the image of $x$. By symmetry, $z$ is purely imaginary.
See Figure \ref{fig:first_moment} for a schematic representation of these notations.
Since $|x| > 1-\delta$, $\Im(z) \to \infty$ as $\delta \to 0.$ By conformal invariance and then by a union bound, the right hand side of \eqref{E:pf_good1} is equal to
\[
2\langle L_z(\Pc) \indic{\exists e \in \Ec_\R, |a(e)|>R, e \cap B(0,1) \neq \varnothing} \rangle^\wired_{\Hb,\R}
\leq 2 \Big\langle \sum_{\substack{e,e' \in \Ec_\R\\|a(e')|>R}} L_z(e) \indic{e' \cap B(0,1) \neq \varnothing} \Big\rangle^\wired_{\Hb,\R}.
\]
The right hand side term is further equal to
\begin{align}
    \label{E:pf_good3}
    & \frac{2}{\pi} \int_{\partial B(0,1) \cap \Hb} \E \Big[ \sum_{\substack{e \neq e' \in \Ec_\R \\ |a(e')|>R}} \frac{H_\Hb(z,a(e)) H_\Hb(z,b(e))}{H_\Hb(a(e),b(e))} \frac{H_{\Hb \setminus B(0,1)}(z',a(e')) H_\Hb(z',b(e'))}{H(a(e'),b(e'))} \Big] \d z' \\
    \label{E:pf_good4}
    & + \frac{2}{\pi} \int_{\partial B(0,1) \cap \Hb} H_{\Hb \setminus B(0,1)}(z,z') \E \Big[ \sum_{\substack{e \in \Ec_\R \\ |a(e)|>R}} \frac{H_{\Hb \setminus B(0,1)}(z,a(e)) H_\Hb(z',b(e))}{H_\Hb(a(e),b(e))} \Big] \d z' \\
    \label{E:pf_good5}
    & + 2 \int_{\partial B(0,1) \cap \Hb} G_{\Hb}(z,z') \E \Big[ \sum_{\substack{e \in \Ec_\R \\ |a(e)|>R}} \frac{H_{\Hb \setminus B(0,1)}(z',a(e)) H_\Hb(z,b(e))}{H_\Hb(a(e),b(e))} \Big] \d z'.
\end{align}
Instead of writing long formulas, we explain in words where these expressions come from. 
In each of these terms, the point $z'$ corresponds to the first hitting point of $B(0,1)$ by the excursion $e'$ when viewed as oriented from $a(e')$ to $b(e')$.
The term \eqref{E:pf_good3} corresponds to the case $e \neq e'$ whereas the terms \eqref{E:pf_good4} and \eqref{E:pf_good5} correspond to the case $e = e'$. In \eqref{E:pf_good4} (resp. \eqref{E:pf_good5}), the excursion $e$ visits the points in the following order: $a(e) \to z \to z' \to b(e)$ (resp. $a(e) \to z' \to z \to b(e)$). In \eqref{E:pf_good5} for instance, conditionally on $a(e)$ and $b(e)$, the infinitesimal probability that an excursion in $\Hb$ from $a(e)$ to $b(e)$ hits for the first time $B(0,1)$ at $z'$ and then hits $z$ before reaching $b(e)$ is given by
\[
\frac{H_{\Hb \setminus B(0,1)}(z',a(e)) G_{\Hb}(z,z') H_\Hb(z,b(e))}{H_\Hb(a(e),b(e))}.
\]
The other terms are similar.

We start by bounding the term in \eqref{E:pf_good3}.
Let $z' = x' + iy' \in \partial B(0,1) \cap \Hb$. Let us denote by $y = \Im(z)$ and recall that $z$ is purely imaginary.
For $n \geq 0$, let 
\[
I_{-1}' = [x'-y',x'+y'] \quad \text{and} \quad
I_n' = [x'-2^{n+1}y',x'-2^ny') \cup (x'+2^ny',x'+2^{n+1}y'].
\]
Define similarly $I_n$, $n \geq -1$, with $0$ and $y$ instead of $x'$ and $y'$. 
Let $n, m, n', m' \geq 1$.
When $a(e) \in I_n$, $b(e) \in I_m$, $a(e') \in I'_{n'}$ and $b(e') \in I'_{m'}$, we can use the explicit expressions \eqref{E:Poisson} and \eqref{E:Poisson_boundary} of the Poisson kernels to bound
\[
\frac{H_\Hb(z,a(e)) H_\Hb(z,b(e))}{H_\Hb(a(e),b(e))}
\leq C y^{-2} 2^{-2n-2m} (a(e) - b(e))^2
\]
and
\[
\frac{H_{\Hb \setminus B(0,1)}(z',a(e')) H_\Hb(z',b(e'))}{H(a(e'),b(e'))}
\leq C {y'}^{-2} 2^{-2n'-2m'} (a(e') - b(e'))^2.
\]
We deduce that the expectation in \eqref{E:pf_good3} is at most
\begin{align}\label{E:pf_good2}
   C y^{-2} {y'}^{-2} \sum 2^{-2(n+m+n'+m')} \E \Big[ \sum_{\substack{e \neq e' \in \Ec_\R \\ a(e) \in I_n, b(e) \in I_m \\ a(e') \in I'_{n'}, b(e') \in I'_{m'}}} (a(e) - b(e))^2 (a(e')-b(e'))^2 \Big]
\end{align}
where the first sum is over $n,m,n',m' \geq -1$ such that $x'+2^{n'+1}y' \geq R$. We can bound the above expectation by $\sqrt{X X'}$ where
\[
X = \E \Big[ \Big( \sum_{\substack{e \in \Ec_\R \\ a(e) \in I_n, b(e) \in I_m}} (a(e) - b(e))^2 \Big)^2 \Big]
\]
and similarly for $X'$. By \eqref{E:C_interval_excursion2}, $X \leq C y^4 2^{3 m \vee n + m \wedge n} \leq C y^4 2^{3m+3n}$ and $X' \leq C {y'}^4 2^{3m'+3n'}$. \eqref{E:pf_good2} is thus at most
\[
C \sum_{n' \geq \floor{\log_2(R/y')}} 2^{-n'/2} \leq C \sqrt{y'} R^{-1/2}.
\]
Integrating with respect to $z'=x'+iy'$ then shows that the whole term written in \eqref{E:pf_good3} is at most $C R^{-1/2}$.

We now bound the term in \eqref{E:pf_good4}. We first get an upper bound by removing the condition that $|a(e)|>R$. As we will see, doing so will still produce a good upper bound because $\Im(z) \to \infty$. We can then use similar arguments as before to show that the expectation in \eqref{E:pf_good4} is bounded by a universal constant. The term \eqref{E:pf_good4} is thus at most
\begin{align*}
    C \int_{\partial B(0,1) \cap \Hb} H_{\Hb \setminus B(0,1)}(z,z') \d z'.
\end{align*}
This integral corresponds to the probability that a Brownian motion starting at $z=iy$ hits $B(0,1)$ before $\R$. By considering an appropriate conformal map, we can see that this probability is of the same order as the probability of hitting $[-1,1]$ before $\R \setminus [-1,1]$ which is equal to $\frac{2}{\pi} \arctan(1/y)$. Since $y \to \infty$ as $\delta \to 0$ (uniformly in $x \in \D$ with $|x|>1-\delta$), this proves that \eqref{E:pf_good4} goes to zero as $\delta \to 0$.
Similarly, the term \eqref{E:pf_good5} is bounded by
\[
C \int_{\partial B(0,1) \cap \Hb} G_{\Hb}(z,z') \d z'
\]
which goes to zero as $\delta \to 0$. Altogether, we have proved that each term in \eqref{E:pf_good3}, \eqref{E:pf_good4} and \eqref{E:pf_good5} vanishes as $\delta \to 0$, uniformly in $|x|>1-\delta$. This concludes the proof of \eqref{E:pf_good_goal3}. The proof of \eqref{E:pf_good_goal2} is similar. This finishes the proof of the lemma.
\end{proof}

\begin{proof}[Proof of Lemma \ref{L:second_good}]
In this proof, we will write correlation functions that are justified by Lemma~\ref{L:2point}.
Let $z, w \in \partial \D$ be such that $|z-w|>\delta^{1/100}$. Let $(z_\eps)_\eps$ and $(w_\eps)_\eps$ be two (deterministic) sequences of points in $\D$ with $|z-z_\eps| < \eps$ and $|w-w_\eps|<\eps$. By Proposition~\ref{P:upper_bound} and dominated convergence theorem, it is enough to show that
\[
    \limsup_{\eps \to 0} \corint{L_{z_\eps}(\Pc)L_{w_\eps}(\Pc) \mathbf{1}_{G_\delta'(z_\eps/|z_\eps|) \cap G_\delta'(w_\eps/|w_\eps|)}} \leq (5/\pi)^2.
\]
In this proof, we will always assume that $\eps < \delta^{1000}$. For such small values of $\eps$, we have the inclusion $G_\delta'(z_\eps/|z_\eps|) \subset G_\delta(z)$ where we recall that $G_\delta(z)$ and $G_\delta'(z_\eps/|z_\eps|)$ are defined in \eqref{E:def_good} and \eqref{E:def_good'} respectively. We are thus aiming to show that
\begin{equation}
    \label{E:pf_goal2}
    \limsup_{\eps \to 0} \corint{L_{z_\eps}(\Pc)L_{w_\eps}(\Pc) \mathbf{1}_{G_\delta(z) \cap G_\delta(w)}} \leq (5/\pi)^2.
\end{equation}

\begin{figure}
   \centering
   \begin{subfigure}{.4\columnwidth}
    \def\svgwidth{\columnwidth}
   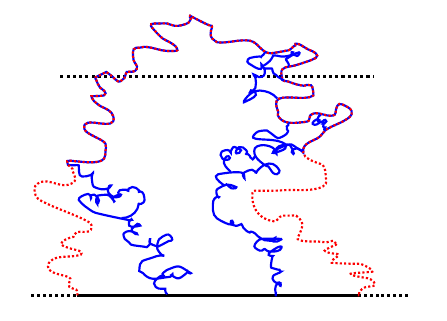
   \end{subfigure}
\caption{Illustration of the setup of the proof of Lemma \ref{L:second_good}. The outer boundary $\gamma$ is the doted red curve and $e_{\max}$ is the blue excursion. On the event $E_1(\Ec_{[-1,1]})$, $e_{\max}$ is the only excursion reaching $\Hb_{\eta_2}$. On the event $E_2(\Ec_{[-1,1]})$, $e_{\max}$ does not visit $\varphi_\gamma(B(z,\delta^{100}))$ which is the region depicted in yellow.}\label{fig:lemma2nd}
\end{figure}

\noindent \textbf{Brownian loop in $\Hb$ wired on $[-1,1]$ -- Setup.}
We start by introducing the setup. See Figure~\ref{fig:lemma2nd} for an illustration.
Consider a Brownian loop $\Pc \sim \P_{\Hb,[-1,1]}^\wired$ in $\Hb$ wired on $[-1,1]$.
$\Pc$ is the concatenation of excursions $e \in \Ec_{[-1,1]}$ in $\Hb$ attached to $[-1,1]^2$.
    Let $\gamma = \out(\Pc)$. Recalling the definition \eqref{E:z-z+} of $z_-$ and $z_+$, let $\varphi_\gamma : \D \to \inte(\gamma)$ be the unique conformal map that sends $z_-$ to $-1$, $z_+$ to $1$ and $w$ to the boundary point with maximal imaginary part (which is a.s. unique). We will denote by $X = \Re(\varphi_\gamma(w))$ and $Y = \Im(\varphi_\gamma(w))$.
    The compact $\overline{\bigcup_{e \in \Ec_{[-1,1]}} \varphi_\gamma^{-1}(e)}$ is the closure of the union of excursions in $\D$ with both endpoints on $\partial \D$ whose law agree with $\pint$. With a slight abuse of notation, we will denote this collection of excursions by $\varphi_\gamma^{-1}(\Ec_{[-1,1]})$. In addition, we will denote by $\varphi_\gamma(G_\delta(z) \cap G_\delta(w))$ the event that $G_\delta(z) \cap G_\delta(w)$ occurs for the collection of excursions $\varphi_\gamma^{-1}(\Ec_{[-1,1]})$.
    Similarly to Lemma \ref{L:2point_gamma}, conformal invariance implies that
    \begin{equation}\label{E:important}
    \E_{\Hb,[-1,1]}^\wired[ L_{\varphi_\gamma(z_\eps)}(\Pc) L_{\varphi_\gamma(w_\eps)}(\Pc) \mathbf{1}_{\varphi_\gamma(G_\delta(z) \cap G_\delta(w))} \vert \gamma ] = \corint{L_{z_\eps}(\Pc)L_{w_\eps}(\Pc) \mathbf{1}_{G_\delta(z) \cap G_\delta(w)}} \qquad \text{a.s.}
    \end{equation}
    The left hand side is a correlation function that is analogous to the one on the right hand side for the law $\P_{\Hb,[-1,1]}^\wired(\cdot \vert \gamma)$. To ease some notations below, we keep writing it in this way in this proof instead of using the notation of correlation functions.

    Almost surely, there is a unique excursion $e_{\max} \in \Ec_{[-1,1]}$ with maximal imaginary part, i.e. such that $\max \Im(e_{\max}) > \max \Im(e)$ for all $e \in \Ec_{[-1,1]} \setminus \{e_{\max}\}$. This excursion will play a special role in the following.
    
    Let $\eta_1,\eta_2 >0$ be small with $\eta_1 \ll \eta_2 \ll \delta$. In the following, we will always assume that $\eps$ is smaller than $\eta_1$. We consider the following half plane and horizontal strips:
    \[
    \Hb_{\eta_2} = \Hb + i (Y - \eta_2),
    \qquad
    S_{\eta_2} = \{ z \in \C: 0< \Im(z) < Y - \eta_2 \},
    \]
    \[
    S_{\max} = \{ z \in \C: 0< \Im(z) < Y \}.
    \]
    We will consider four events, two of them concerning solely the outer boundary $\gamma$ and two other events concerning more globally the set of excursions $\Ec_{[-1,1]}$. We define the events:
\begin{itemize}
    \item $E_1(\gamma)$: $\inte(\gamma) \cap \Hb_{\eta_2}$ is contained in $\varphi_\gamma(B(w,\delta^{100}))$ and contains $\varphi_\gamma(B(w,\eta_1))$;
    \item $E_2(\gamma)$: $\gamma$ does not intersect $\partial \Hb_{\eta_2} \cap ((-\infty, X-\sqrt{\eta_2}) \cup (X+\sqrt{\eta_2},\infty))$;
    \item $E_1(\Ec_{[-1,1]})$: $e_{\max}$ is the only excursion of $\Ec_{[-1,1]}$ that reaches $\Hb_{\eta_2}$;
    \item $E_2(\Ec_{[-1,1]})$: $e_{\max}$ does not intersect $\varphi_\gamma(B(z,\delta^{100}))$.
\end{itemize}
Note that $E_1(\gamma) \cap E_2(\gamma)$ has a positive probability. With some effort, one should be able to show that $\P(E_1(\gamma) \cap E_2(\gamma)) \to 1$ as $\eta_1 \to 0$ and then $\eta_2\to 0$, but we will not need this stronger fact.

\begin{figure}
   \centering
   \begin{subfigure}{.3\columnwidth}
    \def\svgwidth{\columnwidth}
   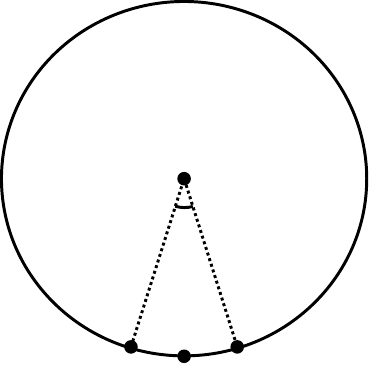
   \end{subfigure}
\caption{Schematic representation of all the excursions in $\varphi_\gamma^{-1}(\Ec_{[-1,1]})$ with at least one endpoint in $(z_-,z_+)$. $z_{\max,+}$ (resp. $z_{\max,-}$) is the endpoint of such an excursion that lies on the arc $(z,w)$ (resp. $(w,z)$) and whose distance to $z$ is maximal. By definition, there is no excursion in $\varphi_\gamma^{-1}(\Ec_{[-1,1]})$ with one endpoint on $(z_-,z_+)$ and one endpoint on $(z_{\max,+},z_{\max,-})$.}\label{fig:zmax+}
\end{figure}

    \medskip

    \noindent \textbf{Inclusion of events.}
    In this paragraph, we will show that
    \begin{equation}
        \label{E:pf_good_inclusion1}
        \varphi_\gamma(G_\delta(z)) \subset E_2(\Ec_{[-1,1]})
    \end{equation}
    and
    \begin{equation}
        \label{E:pf_good_inclusion2}
    E_1(\gamma) \cap \varphi_\gamma(G_\delta(z) \cap G_\delta(w)) \subset E_1(\Ec_{[-1,1]}).
    \end{equation}
To this end, consider the set of excursions $e \in \varphi_\gamma^{-1}(\Ec_{[-1,1]})$ with at least one endpoint in $(z_-,z_+)$. Define now $z_{\max,+}$ (resp. $z_{\max,-}$) to be the endpoint of such an excursion that lies on the counterclockwise arc $(z,w)$ (resp. $(w,z)$) and whose distance to $z$ is maximal. See Figure \ref{fig:zmax+} for an illustration.
Observe that
\begin{itemize}
    \item excursions $e \in \varphi_\gamma^{-1}(\Ec_{[-1,1]})$ with at least one endpoint in $[z_{\max,+},z_{\max,-}]$ belong to $\varphi_\gamma^{-1}(e_{\max})$;
    \item excursions $e \in \varphi_\gamma^{-1}(\Ec_{[-1,1]})$ with both endpoints in $(z_-,z_+)$ do not belong to $\varphi_\gamma^{-1}(e_{\max})$.
\end{itemize}
On the event $\varphi_\gamma(G_\delta(z))$, the excursions $e \in \varphi_\gamma^{-1}(\Ec_{[-1,1]})$ that intersect $B(z,\delta^{100})$ have both endpoints in $(z_-,z_+)$ and therefore do not belong to $\varphi_\gamma^{-1}(e_{\max})$. This shows \eqref{E:pf_good_inclusion1}.

On the event $\varphi_\gamma(G_\delta(z))$, the excursions with at least one endpoint in $(z_-,z_+)$ have diameter at most $\delta^{1/10}$. On this event, we thus have $|z_{\max,\pm} - z| \leq \delta^{1/10}$ and, because $|z-w|>\delta^{1/100}$, the arc $(w_-,w_+)$ is included in $[z_{\max,+},z_{\max,-}]$. In addition, on the event $\varphi_\gamma(G_\delta(w))$, the only excursions $e \in \varphi_\gamma^{-1}(\Ec_{[-1,1]})$ that visit $B(w,\delta^{100})$ have both endpoints in $(w_-,w_+)$. On $\varphi_\gamma(G_\delta(z) \cap G_\delta(w))$, these excursions must have both endpoints in $[z_{\max,+},z_{\max,-}]$ and therefore belong to $\varphi_\gamma^{-1}(e_{\max})$. This shows that, on $\varphi_\gamma(G_\delta(z) \cap G_\delta(w))$, the only excursion $e \in \Ec_{[-1,1]}$ that visits $\varphi_\gamma(B(w,\delta^{100}))$ is $e_{\max}$. Because on $E_1(\gamma)$ we have $\inte(\gamma) \cap \Hb_{\eta_2} \subset \varphi_\gamma(B(w,\delta^{100}))$, we obtain \eqref{E:pf_good_inclusion2}.

\medskip
\noindent\textbf{Change of measure.} In this step, we are going to change the law $\P_{\Hb,[-1,1]}^\wired (\cdot \vert X+iY)$ into a new law $\tilde \P_{\Hb,[-1,1]}^\wired (\cdot \vert X+iY)$ which is mutually absolutely continuous with respect to $\P_{\Hb,[-1,1]}^\wired(\cdot \vert X+iY)$, with a Radon--Nikodym derivative close to 1. As before, $X+iY$ will correspond to the point with maximal imaginary part. This change of measure is motivated by the following fact. Because of the interdependence of the endpoints $a(e), b(e), e \in \Ec_{[-1,1]}$, the maximal excursion $e_{\max}$ is \emph{not} independent of the other excursions $\{ e \in \Ec_{[-1,1]} \setminus \{e_{\max}\} \}$ under $\P_{\Hb,[-1,1]}^\wired(\cdot \vert \gamma)$. On the other hand, the excursion $e_{\max}$, or rather the top part $e_{\max}^2 \wedge e_{\max}^3$ of $e_{\max}$ (see below for precise definitions), will be independent of $\{ e \in \Ec_{[-1,1]} \setminus \{e_{\max}\} \}$ under the new law $\tilde \P_{\Hb,[-1,1]}^\wired(\cdot \vert \gamma)$.

    Let $U \in \partial \Hb_{\eta_2}$ and $V \in \partial \Hb_{\eta_2}$ be the first entrance and last exit of $e_{\max}$ in $\Hb_{\eta_2}$. We can decompose $e_{\max}$ into the concatenation $e_{\max} = e_{\max}^1 \wedge e_{\max}^2 \wedge e_{\max}^3 \wedge e_{\max}^4$ where $e_{\max}^1$  (resp. $e_{\max}^4$) is an excursion in $S_{\eta_2}$ from $a(e_{\max})$ to $U$ (resp. from $V$ to $b(e_{\max})$) and $e_{\max}^2$ (resp. $e_{\max}^3$) is an excursion in $S_{\max}$ from $U$ to $\varphi_\gamma(w)$ (resp. from $\varphi_\gamma(w)$ to $V$). Moreover, conditionally on the endpoints, the excursions $e_{\max}^i, i=1, \dots, 4$, are independent.

    Conditionally on $a(e_{\max})$, $b(e_{\max})$ and $\varphi_\gamma(w)$, the joint law of $(U,V)$ is given by
    \begin{align}
        & \P_{\Hb,[-1,1]}^\wired(U \in \d u, V \in \d v \vert a(e_{\max}), b(e_{\max}), \varphi_\gamma(w)) \\
        & \hspace{40pt} = H_{S_{\max}}(a(e_{\max}),\varphi_\gamma(w))^{-1} H_{S_{\max}}(b(e_{\max}),\varphi_\gamma(w))^{-1} \notag\\
        & \hspace{50pt} \times H_{S_{\eta_2}}(a(e_{\max}),u) H_{S_{\max}}(u,\varphi_\gamma(w)) H_{S_{\max}}(v,\varphi_\gamma(w)) H_{S_{\eta_2}}(b(e_{\max}),v).
        \notag 
    \end{align}
    These Poisson kernels turn out to be explicit; see \eqref{E:Poisson_strip1}-\eqref{E:Poisson_strip4}. It follows that the above display is further equal to
    \begin{align*}
    & \frac14 \frac{Y^2}{(Y -{\eta_2})^4} \Big(\cosh\Big(\pi \frac{a(e_{\max}) - X}{Y}\Big) + 1\Big) \Big(\cosh\Big(\pi \frac{b(e_{\max}) - X}{Y}\Big) + 1\Big) \\
    & \times \Big(\cosh\Big(\pi \frac{a(e_{\max}) - \Re (u)}{Y - {\eta_2}}\Big) + 1\Big)^{-1} \Big(\cosh\Big(\pi \frac{b(e_{\max}) - \Re (v)}{Y - {\eta_2}}\Big) + 1\Big)^{-1}\\
    & \times (\sin (\pi {\eta_2}/Y ))^2 \Big(\cosh\Big(\pi \frac{\Re(u) - X)}{Y}\Big) - \cos (\pi{\eta_2}/Y) \Big)^{-1} \Big(\cosh\Big(\pi \frac{\Re(v) - X)}{Y}\Big) - \cos (\pi{\eta_2}/Y)\Big)^{-1}.
    \end{align*}
Let $\wt U, \wt V \in \partial \Hb_{\eta_2}$ be i.i.d. with common distribution (the following expression is indeed a density, i.e. integrates to 1 by \eqref{E:Poisson_integral})
\[
\P(\wt U \in \d u \vert \varphi_\gamma(w)) = \frac12 \frac{\sin(\pi \delta/Y)}{Y-\delta} \Big(\cosh\Big(\pi \frac{\Re(u) - X)}{Y}\Big) - \cos (\pi\delta/Y)\Big)^{-1}.
\]
On the event $E_2(\gamma)$, $|U - X| \leq \sqrt{\delta}$ and $|V - X| \leq \sqrt{\delta}$. In particular, on this event,
\begin{equation}\label{E:pf_RN1}
    \frac{1}{1+\eta} \leq \frac{\P_{\Hb,[-1,1]}^\wired(U \in \d u, V \in \d v \vert \gamma)}{\Prob{\wt U \in \d u, \wt V \in \d v \vert \gamma}} \leq 1+\eta
\end{equation}
for some $\eta = \eta(\varphi_\gamma(w))$ that goes to zero as $\eta_2 \to 0$.

We can now define the aforementioned law $\tilde \P_{\Hb,[-1,1]}^\wired(\cdot \vert X+iY)$. Under this law, $\Pc$ is, as before, the concatenation of excursions in $\Hb$ attached to $[-1,1]$, except that the law of the highest excursion $e_{\max}$ is different.
It is the concatenation of four excursions $e_{\max}^i, i=1, \dots, 4,$ where $e_{\max}^1$  (resp. $e_{\max}^4$) is an excursion in $S_{\eta_2}$ from $a(e_{\max})$ to $\tilde U$ (resp. from $\tilde V$ to $b(e_{\max})$) and $e_{\max}^2$ (resp. $e_{\max}^3$) is an excursion in $S_{\max}$ from $\tilde U$ to $X+iY$ (resp. from $X+iY$ to $\tilde V$).
The only difference with $\P_{\Hb,[-1,1]}^\wired(\cdot \vert X+iY)$ is that the law of the first entrance and last exit of $e_{\max}$ in $\Hb_{\eta_2}$ are given by the law of $(\tilde U, \tilde V)$.

Let $\Fc$ be the sigma algebra generated by $\{e \in \Ec_{[-1,1]} \setminus \{e_{\max} \} \}$. The probability measure $\tilde \P_{\Hb,[-1,1]}^\wired$ has the desired property that, under this law,
$e_{\max}^2$ and $e_{\max}^3$ are independent of $\Fc$.

Before moving on, we rephrase slightly \eqref{E:pf_RN1} and claim that for any nonnegative measurable function $F$, and on the event $E_2(\gamma)$,
\begin{equation}
\label{E:pf_RN}
\frac{1}{1+\eta} \tilde \E^\wired_{\Hb,[-1,1]}[F(\Pc) \vert \gamma] \leq
\E^\wired_{\Hb,[-1,1]}[F(\Pc) \vert \gamma] \leq (1+\eta) \tilde \E^\wired_{\Hb,[-1,1]}[F(\Pc) \vert \gamma].
\end{equation}
Indeed, from \eqref{E:pf_RN1} and the definition of $\tilde \E^\wired_{\Hb,[-1,1]}[F(\Pc)]$, it follows directly that for any nonnegative measurable function $F$,
\[
\E^\wired_{\Hb,[-1,1]}[F(\Pc)] \leq (1+\eta) \tilde \E^\wired_{\Hb,[-1,1]}[F(\Pc)].
\]
In particular, for any nonnegative measurable functions $F$ and $G$,
\begin{align*}
\E^\wired_{\Hb,[-1,1]}[ G(\gamma) \E^\wired_{\Hb,[-1,1]}&[F(\Pc)\vert \gamma]]
= \E^\wired_{\Hb,[-1,1]}[ G(\gamma) F(\Pc)] \\
& \leq (1+\eta) \E^\wired_{\Hb,[-1,1]}[ G(\gamma) F(\Pc)]
= (1+\eta) \tilde \E^\wired_{\Hb,[-1,1]}[ G(\gamma) \tilde \E^\wired_{\Hb,[-1,1]}[F(\Pc)\vert \gamma]]
\end{align*}
which shows the second inequality in \eqref{E:pf_RN}. The first inequality is similar. Note that these inequalities will immediately transfer to inequalities concerning correlation functions.

\medskip
\noindent\textbf{Wrapping up.}
By \eqref{E:pf_good_inclusion1} and \eqref{E:pf_good_inclusion2}, on the event $E_1(\gamma)$, we have
\begin{align*}
    & \E_{\Hb,[-1,1]}^\wired[L_{\varphi_\gamma(z_\eps)}(\Pc) L_{\varphi_\gamma(w_\eps)}(\Pc) \mathbf{1}_{\varphi_\gamma(G_\delta(z) \cap G_\delta(w))} \vert \gamma] \\
    & \leq \E_{\Hb,[-1,1]}^\wired[L_{\varphi_\gamma(z_\eps)}(\Pc) L_{\varphi_\gamma(w_\eps)}(\Pc) \mathbf{1}_{E_1(\Ec_{[-1,1]}) \cap E_2(\Ec_{[-1,1]})} \vert \gamma].
\end{align*}
Moreover, on $E_1(\gamma) \cap E_1(\Ec_{[-1,1]})$ (resp. on $E_1(\gamma) \cap E_2(\Ec_{[-1,1]})$), $e_{\max}$ is the only excursion that contributes to the local time at $\varphi_\gamma(w_\eps)$ (resp. $e_{\max}$ does not contribute to the local time at $\varphi_\gamma(z_\eps)$). That is, on $E_1(\gamma)$,
\begin{align*}
    & \E_{\Hb,[-1,1]}^\wired[L_{\varphi_\gamma(z_\eps)}(\Pc) L_{\varphi_\gamma(w_\eps)}(\Pc) \mathbf{1}_{\varphi_\gamma(G_\delta(z) \cap G_\delta(w))} \vert \gamma] \\
    & \leq \E_{\Hb,[-1,1]}^\wired[L_{\varphi_\gamma(z_\eps)}(\Pc \setminus e_{\max}) L_{\varphi_\gamma(w_\eps)}(e_{\max}^2 \wedge e_{\max}^3) \mathbf{1}_{E_1(\Ec_{[-1,1]}) \cap E_2(\Ec_{[-1,1]})} \vert \gamma] \\
    & \leq \E_{\Hb,[-1,1]}^\wired[L_{\varphi_\gamma(z_\eps)}(\Pc \setminus e_{\max}) L_{\varphi_\gamma(w_\eps)}(e_{\max}^2 \wedge e_{\max}^3) \vert \gamma].
\end{align*}
By \eqref{E:pf_RN} and on $E_2(\gamma)$, the right hand side is at most
\begin{align*}
& (1+\eta) \tilde \E_{\Hb,[-1,1]}^\wired[L_{\varphi_{\gamma}(z_\eps)}(\Pc\setminus e_{\max}) L_{\varphi_{\gamma}(w_\eps)}(e_{\max}^2 \wedge e_{\max}^3) \vert \gamma] \\
& = (1+\eta) \tilde \E_{\Hb,[-1,1]}^\wired[L_{\varphi_{\gamma}(z_\eps)}(\Pc \setminus e_{\max}) \tilde \E_{\Hb,[-1,1]}^\wired[L_{\varphi_{\gamma}(w_\eps)}(e_{\max}^2 \wedge e_{\max}^3) \vert \Fc, \gamma] \vert \gamma].
\end{align*}
    We now focus on $\tilde \E_{\Hb,[-1,1]}^\wired[L_{\varphi_{\gamma}(w_\eps)}(e_{\max}^2 \wedge e_{\max}^3) \vert \Fc, \gamma]$. By independence of $e_{\max}^2 \wedge e_{\max}^3$ and $\Fc$ under $\tilde \P_{\Hb,[-1,1]}^\wired(\cdot \vert \gamma)$,
    \[
    \tilde \E_{\Hb,[-1,1]}^\wired[L_{\varphi_{\gamma}(w_\eps)}(e_{\max}^2 \wedge e_{\max}^3) \vert \Fc, \gamma]
    = \tilde \E_{\Hb,[-1,1]}^\wired[L_{\varphi_{\gamma}(w_\eps)}(e_{\max}^2 \wedge e_{\max}^3) \vert \gamma].
    \]
    Moreover, by \eqref{E:pf_RN} and on the event $E_2(\gamma)$,
    \begin{align*}
        & \tilde \E_{\Hb,[-1,1]}^\wired[L_{\varphi_{\gamma}(w_\eps)}(e_{\max}^2 \wedge e_{\max}^3) \vert \gamma]
        \leq (1+\eta) \E_{\Hb,[-1,1]}^\wired[L_{\varphi_{\gamma}(w_\eps)}(e_{\max}^2 \wedge e_{\max}^3) \vert \gamma] \\
        & \leq (1+\eta) \E_{\Hb,[-1,1]}^\wired[L_{\varphi_{\gamma}(w_\eps)}(\Pc) \vert \gamma] = (1+\eta) 5/\pi.
    \end{align*}
    Wrapping up, we have proved that, on $E_1(\gamma) \cap E_2(\gamma)$,
    \begin{align*}
        & \E_{\Hb,[-1,1]}^\wired[L_{\varphi_\gamma(z_\eps)}(\Pc) L_{\varphi_\gamma(w_\eps)}(\Pc) \mathbf{1}_{\varphi_\gamma(G_\delta(z) \cap G_\delta(w))} \vert \gamma]
        \leq (1+\eta)^2 \frac5\pi \E[L_{\varphi_{\gamma}(z_\eps)}(\Pc \setminus e_{\max}) \vert \gamma] \\
        & \leq (1+\eta)^2 \frac5\pi \E[L_{\varphi_{\gamma}(z_\eps)}(\Pc) \vert \gamma] = (1+\eta)^2 \Big(\frac5\pi \Big)^2.
    \end{align*}
    By \eqref{E:important} and since $\P(E_1(\gamma) \cap E_2(\gamma)) >0$, we have obtained that
    \[
    \Eint[L_{z_\eps}(\Pc)L_{w_\eps}(\Pc) \mathbf{1}_{G_\delta(z) \cap G_\delta(w)}] \leq (1+\eta)^2 \Big(\frac5\pi \Big)^2.
    \]
    Because $\eta \to 0$ as $\eps \to 0$ and then $\eta_2 \to 0$, this shows \eqref{E:pf_goal2} which concludes the proof.
\end{proof}
\end{proof}

\paragraph*{Acknowledgements}
We thank the anonymous referee for reading the paper carefully and for many useful suggestions.
During the process of writing this article, AJ was supported by Eccellenza grant 194648 of the Swiss National Science Foundation and was a member of NCCR SwissMAP. WQ is partially supported by a GRF grant from the Research Grants Council of the Hong Kong SAR (project CityU11305823).

\bibliographystyle{plain}
\small{\bibliography{bibliography}}

@article{Lawler2000TheDO,
  title={The dimension of the planar {B}rownian frontier is 4/3},
  author={Gregory F. Lawler and Oded Schramm and Wendelin Werner},
  journal={Mathematical Research Letters},
  year={2000},
  volume={8},
  pages={401-411}
}

@article {MR3101840,
    AUTHOR = {Schramm, Oded and Sheffield, Scott},
     TITLE = {A contour line of the continuum {G}aussian free field},
   JOURNAL = {Probab. Theory Related Fields},
  FJOURNAL = {Probability Theory and Related Fields},
    VOLUME = {157},
      YEAR = {2013},
    NUMBER = {1-2},
     PAGES = {47--80},
      ISSN = {0178-8051},
   MRCLASS = {60J67 (60G15)},
  MRNUMBER = {3101840},
MRREVIEWER = {Fredrik Johansson Viklund},
       DOI = {10.1007/s00440-012-0449-9},
       URL = {http://dx.doi.org/10.1007/s00440-012-0449-9},
}

@article {AidekonHuShi2018,
    AUTHOR = {A\"{\i}d\'{e}kon, Elie and Hu, Yueyun and Shi, Zhan},
     TITLE = {Points of infinite multiplicity of planar {B}rownian motion:
              measures and local times},
   JOURNAL = {Ann. Probab.},
  FJOURNAL = {The Annals of Probability},
    VOLUME = {48},
      YEAR = {2020},
    NUMBER = {4},
     PAGES = {1785--1825},
      ISSN = {0091-1798},
   MRCLASS = {60J65 (60J55)},
  MRNUMBER = {4124525},
       DOI = {10.1214/19-AOP1407},
       URL = {https://doi-org.ezp.lib.cam.ac.uk/10.1214/19-AOP1407},
}

@article {jegoBMC,
    AUTHOR = {Jego, Antoine},
     TITLE = {Planar {B}rownian motion and {G}aussian multiplicative chaos},
   JOURNAL = {Ann. Probab.},
  FJOURNAL = {The Annals of Probability},
    VOLUME = {48},
      YEAR = {2020},
    NUMBER = {4},
     PAGES = {1597--1643},
      ISSN = {0091-1798},
   MRCLASS = {60J65 (60J55)},
  MRNUMBER = {4124521},
       DOI = {10.1214/19-AOP1399},
       URL = {https://doi-org.ezp.lib.cam.ac.uk/10.1214/19-AOP1399},
}

@article{bass1994,
author = "Bass, Richard F. and Burdzy, Krzysztof and Khoshnevisan, Davar",
doi = "10.1214/aop/1176988722",
fjournal = "The Annals of Probability",
journal = "Ann. Probab.",
month = "04",
number = "2",
pages = "566--625",
publisher = "The Institute of Mathematical Statistics",
title = "Intersection Local Time for Points of Infinite Multiplicity",
url = "https://doi.org/10.1214/aop/1176988722",
volume = "22",
year = "1994"
}

@article {MR2045953,
    AUTHOR = {Lawler, Gregory F. and Werner, Wendelin},
     TITLE = {The {B}rownian loop soup},
   JOURNAL = {Probab. Theory Related Fields},
  FJOURNAL = {Probability Theory and Related Fields},
    VOLUME = {128},
      YEAR = {2004},
    NUMBER = {4},
     PAGES = {565--588},
      ISSN = {0178-8051},
   MRCLASS = {60J65 (81T40)},
  MRNUMBER = {2045953},
MRREVIEWER = {Andrea Posilicano},
       DOI = {10.1007/s00440-003-0319-6},
       URL = {https://doi.org/10.1007/s00440-003-0319-6},
}

@book {MR2129588,
    AUTHOR = {Lawler, Gregory F.},
     TITLE = {Conformally invariant processes in the plane},
    SERIES = {Mathematical Surveys and Monographs},
    VOLUME = {114},
 PUBLISHER = {American Mathematical Society},
   ADDRESS = {Providence, RI},
      YEAR = {2005},
     PAGES = {xii+242},
      ISBN = {0-8218-3677-3},
   MRCLASS = {60-02 (30-02 30C35 31A15 60J65 81T40 82B27)},
  MRNUMBER = {2129588 (2006i:60003)},
MRREVIEWER = {Zhen-Qing Chen},
}

@article{QianWerner19Clusters,
	title={Decomposition of {B}rownian loop-soup clusters},
	author={Qian, Wei and Werner, Wendelin},
	journal={J. Eur. Math. Soc.},
	volume={21},
	number={10},
	pages={3225--3253},
	year={2019}
}

@book{morters2010brownian,
  title={Brownian motion},
  author={M{\"o}rters, Peter and Peres, Yuval},
  volume={30},
  year={2010},
  publisher={Cambridge University Press}
}

@article{MR1992830,
  title={Conformal restriction: the chordal case},
  author={Lawler, Gregory and Schramm, Oded and Werner, Wendelin},
  journal={J. Amer. Math. Soc.},
  fjournal={Journal of the American Mathematical Society},
  volume={16},
  number={4},
  pages={917--955},
  year={2003}
}

@article{JLQ23b,
Author = {Antoine Jego and Titus Lupu and Wei Qian},
Title = {{Conformally invariant fields out of Brownian loop soups}},
journal = {arXiv:2307.10740},
Year = {2023}
}

@article{MR3901648,
author = {Wei Qian},
title = {{Conditioning a Brownian loop-soup cluster on a portion of its boundary}},
volume = {55},
journal = {Annales de l'Institut Henri Poincaré, Probabilités et Statistiques},
number = {1},
publisher = {Institut Henri Poincaré},
pages = {314 -- 340},
year = {2019},
doi = {10.1214/18-AIHP883},
URL = {https://doi.org/10.1214/18-AIHP883}
}

@article {MR2217292,
    AUTHOR = {Garban, Christophe and Trujillo Ferreras, Jos\'{e} A.},
     TITLE = {The expected area of the filled planar {B}rownian loop is
              {$\pi/5$}},
   JOURNAL = {Comm. Math. Phys.},
  FJOURNAL = {Communications in Mathematical Physics},
    VOLUME = {264},
      YEAR = {2006},
    NUMBER = {3},
     PAGES = {797--810},
      ISSN = {0010-3616,1432-0916},
   MRCLASS = {82B41 (60J65)},
  MRNUMBER = {2217292},
MRREVIEWER = {Robert\ Otto\ Bauer},
       DOI = {10.1007/s00220-006-1555-2},
       URL = {https://doi.org/10.1007/s00220-006-1555-2},
}

@article{widder1961functions,
  title={Functions harmonic in a strip},
  author={Widder, David V},
  journal={Proceedings of the American Mathematical Society},
  volume={12},
  number={1},
  pages={67--72},
  year={1961}
}

@article {MR2350053,
    AUTHOR = {Werner, Wendelin},
     TITLE = {The conformally invariant measure on self-avoiding loops},
   JOURNAL = {J. Amer. Math. Soc.},
  FJOURNAL = {Journal of the American Mathematical Society},
    VOLUME = {21},
      YEAR = {2008},
    NUMBER = {1},
     PAGES = {137--169},
      ISSN = {0894-0347},
   MRCLASS = {60D05 (30C35 60J65)},
  MRNUMBER = {2350053 (2009d:60028)},
MRREVIEWER = {Robert Otto Bauer},
       DOI = {10.1090/S0894-0347-07-00557-7},
       URL = {http://dx.doi.org/10.1090/S0894-0347-07-00557-7},
}

@article {MR2153402,
    AUTHOR = {Rohde, Steffen and Schramm, Oded},
     TITLE = {Basic properties of {SLE}},
   JOURNAL = {Ann. of Math. (2)},
  FJOURNAL = {Annals of Mathematics. Second Series},
    VOLUME = {161},
      YEAR = {2005},
    NUMBER = {2},
     PAGES = {883--924},
      ISSN = {0003-486X,1939-8980},
   MRCLASS = {60K35 (28A80 60J55 60J65)},
  MRNUMBER = {2153402},
MRREVIEWER = {Olivier\ Raimond},
       DOI = {10.4007/annals.2005.161.883},
       URL = {https://doi.org/10.4007/annals.2005.161.883},
}

@article {MR3786302,
    AUTHOR = {Gwynne, Ewain and Miller, Jason and Sun, Xin},
     TITLE = {Almost sure multifractal spectrum of {S}chramm-{L}oewner
              evolution},
   JOURNAL = {Duke Math. J.},
  FJOURNAL = {Duke Mathematical Journal},
    VOLUME = {167},
      YEAR = {2018},
    NUMBER = {6},
     PAGES = {1099--1237},
      ISSN = {0012-7094,1547-7398},
   MRCLASS = {60J67 (60G17)},
  MRNUMBER = {3786302},
       DOI = {10.1215/00127094-2017-0049},
       URL = {https://doi.org/10.1215/00127094-2017-0049},
}

@book {MR0665254,
    AUTHOR = {Mandelbrot, Benoit B.},
     TITLE = {The fractal geometry of nature},
 PUBLISHER = {W. H. Freeman and Co., San Francisco, CA},
      YEAR = {1982},
     PAGES = {v+460},
      ISBN = {0-7167-1186-9},
   MRCLASS = {00A69 (51-01 54F45 85A35 86A99)},
  MRNUMBER = {665254},
MRREVIEWER = {S.\ Dubuc},
}

@article {MR1879851,
    AUTHOR = {Lawler, Gregory F. and Schramm, Oded and Werner, Wendelin},
     TITLE = {Values of {B}rownian intersection exponents. {II}. {P}lane exponents},
   JOURNAL = {Acta Math.},
  FJOURNAL = {Acta Mathematica},
    VOLUME = {187},
      YEAR = {2001},
    NUMBER = {2},
     PAGES = {275--308},
      ISSN = {0001-5962,1871-2509},
   MRCLASS = {60J65 (30C35 82B41)},
  MRNUMBER = {1879851},
MRREVIEWER = {Christophe\ Giraud},
       DOI = {10.1007/BF02392619},
       URL = {https://doi.org/10.1007/BF02392619},
}

@article {MR1961197,
    AUTHOR = {Lawler, Gregory F. and Schramm, Oded and Werner, Wendelin},
     TITLE = {Analyticity of intersection exponents for planar {B}rownian
              motion},
   JOURNAL = {Acta Math.},
  FJOURNAL = {Acta Mathematica},
    VOLUME = {189},
      YEAR = {2002},
    NUMBER = {2},
     PAGES = {179--201},
      ISSN = {0001-5962,1871-2509},
   MRCLASS = {60J65 (28A80 47A10 60G17 60K35)},
  MRNUMBER = {1961197},
MRREVIEWER = {Ren\'{e}\ L.\ Schilling},
       DOI = {10.1007/BF02392842},
       URL = {https://doi.org/10.1007/BF02392842},
}

@article {MS,
    AUTHOR = {Miller, Jason  and Sheffield, Scott},
     TITLE = {{CLE}(4) and the {G}aussian {F}ree {F}ield},
   JOURNAL = {In preparation},
}

@book {MR1121940,
    AUTHOR = {Karatzas, Ioannis and Shreve, Steven E.},
     TITLE = {Brownian motion and stochastic calculus},
    SERIES = {Graduate Texts in Mathematics},
    VOLUME = {113},
   EDITION = {Second},
 PUBLISHER = {Springer-Verlag, New York},
      YEAR = {1991},
     PAGES = {xxiv+470},
      ISBN = {0-387-97655-8},
   MRCLASS = {60J65 (35K99 35R60 60G44 60H10 60J60)},
  MRNUMBER = {1121940},
       DOI = {10.1007/978-1-4612-0949-2},
       URL = {https://doi.org/10.1007/978-1-4612-0949-2},
}

@article{qian2018,
author = "Qian, Wei and Werner, Wendelin",
doi = "10.1214/18-EJP258",
fjournal = "Electronic Journal of Probability",
journal = "Electron. J. Probab.",
pages = "23 pp.",
pno = "128",
publisher = "The Institute of Mathematical Statistics and the Bernoulli Society",
title = "{The law of a point process of Brownian excursions in a domain is determined by the law of its trace}",
url = "https://doi.org/10.1214/18-EJP258",
volume = "23",
year = "2018"
}

@article{Taylor_1964,
title={{The exact Hausdorff measure of the sample path for planar Brownian motion}},
volume={60},
DOI={10.1017/S0305004100037713},
number={2},
journal={Mathematical Proceedings of the Cambridge Philosophical Society},
author={Taylor, S. J.},
year={1964},
pages={253–258}}

@article{zbMATH02055258,
 author = {Vir{\'a}g, B{\'a}lint},
 title = {Brownian beads},
 fjournal = {Probability Theory and Related Fields},
 journal = {Probab. Theory Relat. Fields},
 issn = {0178-8051},
 volume = {127},
 number = {3},
 pages = {367--387},
 year = {2003},
 language = {English},
 doi = {10.1007/s00440-003-0289-8},
 keywords = {60J65,30C35},
 zbMATH = {2055258},
 Zbl = {1035.60085}
}

\end{document}